\newcommand{\complex}{\mathbb{C}}
\newcommand{\disk}{{\mathbb D}}
\newcommand{\ucirc}{ {\mathbb S}}
\newcommand{\real}{{\mathbb R}}
\newcommand{\zed}{{\mathbb Z}}
\newcommand{\0}{\emptyset}
\newcommand{\lam}{{\mathcal L}}
\newcommand{\edis}{\operatorname{d_E}}
\newcommand{\sm}{\setminus}
\newcommand{\cl}{\overline}
\newcommand{\comb}[2]{\left( \begin{array}{c} #1\\ #2 \end{array}\right)}
\newtheorem{thm}{Theorem}[section]
\newtheorem{lem}[thm]{Lemma}
\newtheorem{cor}[thm]{Corollary}
\newtheorem{prop}[thm]{Proposition}
\theoremstyle{definition}
\newtheorem{ques}{Question}
\newtheorem{defn}[thm]{Definition}
\newtheorem{mydef}[thm]{Definition}
\newtheorem{ex}[thm]{Example}
\theoremstyle{remark}
\newtheorem{rem}[thm]{Remark}
\numberwithin{equation}{section}
\begin{document}

\title[Central Strips of Sibling Leaves]{Central Strips of Sibling Leaves in Laminations of the Unit Disk}

\author[D.~Cosper]{David J. Cosper}
\email[David J. Cosper]{dcosper314@gmail.com}

\author[J.~Houghton]{Jeffrey K. Houghton}
\email[Jeffrey K. Houghton]{echo41011@gmail.com }

\author[J.~Mayer]{John C. Mayer}
\email[John C.~Mayer]{jcmayer@uab.edu}

\author[L.~Mernik]{Luka Mernik}
\email[Luka Mernik]{mernik.1@osu.edu}

\author[J.~Olson]{Joseph W. Olson}
\email[Joseph W. Olson]{joseph.wesley.olson@gmail.com}

\begin{abstract}

Quadratic laminations of the unit disk were introduced by Thurston as a vehicle for understanding the (connected) Julia sets of quadratic polynomials and the parameter space of quadratic polynomials.  The ``Central Strip Lemma'' plays a key role in Thurston's
classification of gaps in quadratic laminations, and in describing the corresponding parameter space.  We generalize the notion of {\em Central Strip} to laminations of all degrees $d\ge2$ and prove a Central Strip Lemma for degree $d\ge2$.  We conclude with applications of the Central Strip Lemma to {\em identity return polygons} that show for higher degree laminations it may play a  role similar to Thurston's lemma.

\end{abstract}

\keywords{ Julia set, holomorphic
dynamics, lamination , identity return}

\subjclass[2010]{Primary: 37F20; Secondary: 54F15}

\date{\today}

\thanks{The  authors thank the UAB
Laminations seminar, and in particular Drs. Lex Oversteegen and
Alexander Blokh and graduate student Ross Ptacek for useful conversations.  The authors also thank the referee for useful comments that improved the presentation.}

\maketitle

\section{Introduction}

Quadratic laminations of the unit disk were introduced by Thurston as a vehicle for understanding the (connected) Julia sets of quadratic polynomials and the parameter space of quadratic polynomials.  The ``Central Strip Lemma'' plays a key role in Thurston's
classification of gaps in quadratic laminations \cite{Thurston:2009}.
It is used to show that there are no wandering polygons for the
angle-doubling map $\sigma_2$ on the unit circle.  Moreover, when a polygon returns to itself, the iteration of $\sigma_2$ is transitive on the vertices.  For $\sigma_2$ it is sufficient to prove these facts for triangles: there are no {\em wandering triangles}, and no {\em identity return triangles}. From these facts,
the classification of types of gaps of a quadratic lamination, and a
parameter space for quadratic laminations, follows. Thurston posed a
question in his notes on laminations that he deemed important to
further progress in the field: ``Can there be wandering triangles
for $\sigma_3$ (and higher degree)?''  When Blokh and Oversteegen
\cite{BlokhOversteegen:2004,BlokhOversteegen:2009}  showed that the answer was ``yes,'' the need to
define and understand central strips for higher degree and their role in
``controlling'' wandering and identity return polygons of a lamination became imperative.  Contributions to this understanding were made by Goldberg \cite{Goldberg:93}, Milnor  \cite{Milnor:2006fr, Milnor:2000}, Kiwi \cite{Kiwi:2002}, Blokh and Levin \cite{BlokhLevin:2002}, Childers \cite{Childers:2007},  and others.

New results in this paper include Definition~\ref{sib-portrait} of a {\em
sibling portrait}, Definition~\ref{def-centralstrip} of a {\em
central strip},  the statement and proof of the Central Strip Structure Theorem~\ref{lem-centralstrip}, the statement and proof of Theorem~\ref{thm-centralstriplemma} (the generalized Central Strip
Lemma), and Theorem~\ref{centraltree}
counting the number of different sibling portraits that could correspond to the preimage of 
a given non-degenerate leaf.  The Central Strip
Lemma can be used to provide new proofs of known results that the
authors believe to be more transparent,  yield more onformation about the laminations, and yield new results for
laminations of degree $d\ge 3$.  Initial applications to identity return triangles under $\sigma_3$ appear in this paper (see Section~\ref{app}), and other applications, particularly to higher degree, will appear in  subsequent papers.

\subsection{Preliminaries}

Let $\complex$ denote the complex plane, $\disk\subset\complex$ the
open unit disk, $\cl\disk$ the closed unit disk, and $\ucirc$ the
boundary of the unit disk (i.e., the unit circle), parameterized as
$\real/\zed$.  For $d\ge 2$, define a map $\sigma_d:\ucirc\to\ucirc$ by
$\sigma_d(t)=dt \pmod 1$.

 \begin{defn} A {\em lamination} $\lam$ is a collections of chords of
$\cl\disk$, which we call {\em leaves}, with the property that any
two leaves meet, if at all, in a point of $\ucirc$, and
such that $\lam$  has the property that
$$\lam^*:=\ucirc\cup\{\cup\lam\}$$ is a closed subset of
$\cl\disk$.  \end{defn}  

It follows that $\lam^*$ is a continuum (compact, connected metric space).  We allow {\em
degenerate} leaves -- all points of $\ucirc$ are degenerate leaves.
If  $\ell\in\lam$ is a leaf, we write $\ell=\cl{ab}$, where $a$ and
$b$ are the endpoints of $\ell$ in $\ucirc$.  We let
$\sigma_d(\ell)$ be the chord $\cl{\sigma_d(a)\sigma_d(b)}$.  If it
happens that $\sigma_d(a)=\sigma_d(b)$, then $\sigma_d(\ell)$ is a
point, called a {\em critical value} of $\lam$ and we say $\ell$ is
a {\em critical} leaf.

\begin{prop}
Let $\sigma_d^*$ denote the linear extension of $\sigma_d$ to leaves
of $\lam$, so that $\sigma_d^*$ is defined on $\lam^*$.  Then
$\sigma_d^*$ is continuous on $\lam^*$. 
\end{prop}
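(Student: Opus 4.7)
The plan is to prove sequential continuity directly. Suppose $x_n\to x$ in $\lam^*$. Since every point of $\lam^*$ lies on a (possibly degenerate) leaf, write $x_n=(1-t_n)a_n+t_n b_n$ with $\cl{a_nb_n}$ a leaf through $x_n$ and $t_n\in[0,1]$ (taking $a_n=b_n=x_n$ whenever $x_n\in\ucirc$), and similarly $x=(1-t)a+tb$ on a leaf $\ell=\cl{ab}$. The definition of the linear extension gives
\[
\sigma_d^*\bigl((1-s)p+sq\bigr) \;=\; (1-s)\sigma_d(p)+s\sigma_d(q),
\]
so the task reduces to showing $(1-t_n)\sigma_d(a_n)+t_n\sigma_d(b_n)\to (1-t)\sigma_d(a)+t\sigma_d(b)$.

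By compactness of $\ucirc$ and of $[0,1]$, I would pass to a subsequence so that $a_n\to a'$, $b_n\to b'$, and $t_n\to t'$, which forces $x=(1-t')a'+t'b'$. Continuity of $\sigma_d$ on $\ucirc$ then gives $\sigma_d^*(x_n)\to(1-t')\sigma_d(a')+t'\sigma_d(b')$ along this subsequence. Since this will be argued for an arbitrary convergent subsequence, the full sequence $\sigma_d^*(x_n)$ will then converge to $\sigma_d^*(x)$, provided that the right-hand side above always equals $\sigma_d^*(x)$.

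The main obstacle, and the only essential use of the lamination hypothesis, is handling the case in which $x$ is interior to $\disk$. Then $\ell$ is non-degenerate and $x$ lies in its relative interior, so the limit chord $\cl{a'b'}$ also contains $x$ in its interior and in particular $a'\ne b'$. If $\{a',b'\}\ne\{a,b\}$, the chords $\cl{a'b'}$ and $\ell$ share the interior point $x$ without coinciding, so they cross transversally there; equivalently, the pairs $\{a,b\}$ and $\{a',b'\}$ link on $\ucirc$. But then for all sufficiently large $n$ the pairs $\{a,b\}$ and $\{a_n,b_n\}$ also link, forcing the leaves $\ell_n,\ell\in\lam$ to cross at an interior point of $\disk$, which contradicts the defining property that any two leaves of $\lam$ meet only in $\ucirc$. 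Hence $\{a',b'\}=\{a,b\}$, the affine parameter matches after relabeling, and the desired limit follows. When instead $x\in\ucirc$, either $a'=b'=x$, or $a'\ne b'$ with $x\in\{a',b'\}$ so that $t'\in\{0,1\}$; in each subcase the linear formula collapses to $\sigma_d(x)=\sigma_d^*(x)$, completing the proof.
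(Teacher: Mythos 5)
The paper leaves this proposition to the reader, so there is no internal proof to compare against; I will simply assess your argument on its own terms. Your proof is correct, and it isolates exactly the right mechanism: the sequential-compactness/closed-graph reduction converts continuity into the claim that if a subsequence of endpoint data $(a_n,b_n,t_n)$ converges to $(a',b',t')$ with $(1-t')a'+t'b'=x$, then $(1-t')\sigma_d(a')+t'\sigma_d(b')=\sigma_d^*(x)$; and the only nontrivial case, $x\in\disk$, is disposed of by observing that the non-crossing axiom for leaves is an open condition in the endpoint pairs, so a strictly linked limit chord $\cl{a'b'}\ne\ell$ would force $\ell_n$ to cross $\ell$ for large $n$. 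Two small points worth making explicit in a polished write-up: first, when you conclude ``they cross transversally,'' you should note that two distinct chords sharing the interior point $x$ cannot share an endpoint either (a chord is determined by two of its points, one of which may be interior), so all four endpoints are distinct and the pairs genuinely interleave; second, the relabeling in the final step changes $t'$ to $1-t'$, and one should check that the affine combination $(1-t')\sigma_d(a')+t'\sigma_d(b')$ is symmetric under that swap, which it is. With those remarks inserted the argument is complete and self-contained.
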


The proof is left to the reader.

\subsection{Leaf Length Function}  Fix the counterclockwise order $<$ on
$\ucirc$ as the preferred (circular) order.
Let $|(a,b)|$ denote the length in the parameterization of $\ucirc$
of the  arc in $\ucirc$ from $a$ to $b$ counterclockwise.
 Given a chord
$\cl{ab}$, there are two arcs of $\ucirc$ subtended by $\cl{ab}$.
 Define the {\em length} of $\cl{ab}$, denoted $|\cl{ab}|$,  to be
the shorter of $|(a,b)|$ or $|(b,a)|$.
 The maximum length of a leaf is thus $\frac12$.
    Note that the length of a critical leaf is $\frac id$
for some $i\in\{1,2, \dots j\mid j=\lfloor{\frac d2}\rfloor\}$.

 \begin{figure}[h] \label{leaflengthfn}
    \begin{center}
    \includegraphics[scale=0.4]{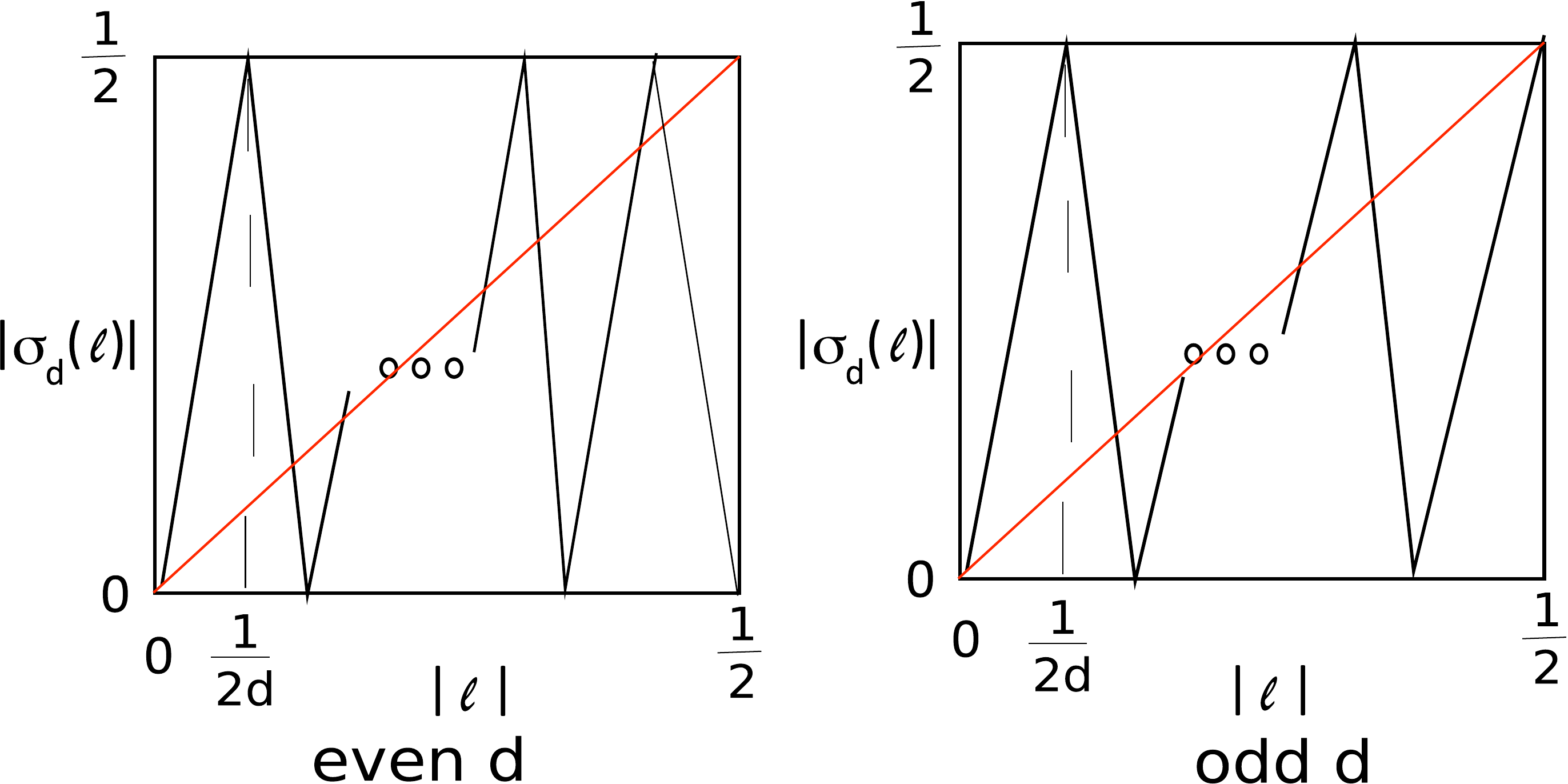}
    \caption{Graph of the leaf length function.}
    \end{center}
  \end{figure}

\begin{lem}\label{lem-tau} Let $x=|\ell|$ where $\ell\in\lam$.  Then
the length $\tau_d(x)$ of $\sigma_d(\ell)$ is given by the function

$$\tau_d(x)=\left\{\begin{array}{ccc}     dx $  if $ 0\leq x \le \frac1{2d}\\
                    1-dx $ if $ \frac1{2d}\leq x \leq \frac1d\\
                    dx-1 $ if $ \frac1d\le x \le \frac3{2d}\\
                    2-dx $ if $ \frac3{2d}\le x \le \frac2{d}\\
                    \vdots \\
                    (-1)^d dx + (-1)^{d+1} \lfloor \frac {d-1}2 \rfloor$ if $ \frac {d-2}{2d}\leq x \leq \frac{d-1}{2d}\\
                    (-1)^{d+1} dx + (-1)^{d+2} \lfloor \frac d2 \rfloor$ if $ \frac{d-1}{2d} \le x \le \frac12 \\
\end{array}\right.$$
defined on the interval $[0,1/2]$.
\end{lem}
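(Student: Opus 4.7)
\emph{Proof proposal.} The proof is a direct computation of the length of the image chord as a function of the length $x$ of $\ell$. The plan has three steps.

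First, normalize by a rotation. Since arc length on $\ucirc$ is translation invariant and $\sigma_d(t+c) \equiv \sigma_d(t)+dc \pmod 1$, replacing $\ell = \cl{ab}$ with $\cl{0,\,b-a}$ changes neither $|\ell|$ nor $|\sigma_d(\ell)|$: the image chord merely gets rotated by $d\cdot a$. We may therefore assume the endpoints of $\ell$ are $0$ and $x$ with $x=|\ell|\in[0,1/2]$, so that the shorter arc is the counterclockwise one from $0$ to $x$.

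Second, reduce to the standard tent-map calculation. The image $\sigma_d(\ell)$ has endpoints $0$ and $dx \pmod 1$, so by definition
$$\tau_d(x) \;=\; \min\bigl(dx \pmod 1,\; 1-(dx \pmod 1)\bigr).$$
As a function of $x\in[0,1/2]$ this is piecewise linear of slope $\pm d$, alternating sign at each breakpoint $k/(2d)$, taking the value $0$ at every integer multiple of $1/d$ and $1/2$ at every odd multiple of $1/(2d)$.

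Third, match the table. Partition $[0,1/2]$ into the $d$ subintervals $I_k = [k/(2d),\,(k+1)/(2d)]$ for $k=0,1,\dots,d-1$. On $I_k$ we have $dx\in[k/2,(k+1)/2]$, and writing $k=2j$ (respectively $k=2j+1$) yields $\tau_d(x)=dx-j$ (respectively $\tau_d(x)=(j+1)-dx$). This reproduces rows $k=0,1,2,3$ of the displayed formula directly.

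The only real (and minor) obstacle is bookkeeping in the last two rows, corresponding to $k=d-2$ and $k=d-1$, which are stated using the parity-dependent factors $(-1)^d,(-1)^{d+1}$ and the floors $\lfloor(d-1)/2\rfloor,\lfloor d/2\rfloor$. These are the uniform rewriting of the $k$-even vs.\ $k$-odd formulas above, and I would verify the match by a short case split on the parity of $d$: in each case, the sign factor selects between $+dx$ and $-dx$, while the floor reduces to the appropriate $j$ or $j+1$, recovering $dx-j$ or $(j+1)-dx$.
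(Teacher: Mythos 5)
The paper leaves this proof to the reader, so there is no paper proof to compare against; your proposal supplies a correct argument. The rotation reduction is sound because $\sigma_d(t+c)=\sigma_d(t)+dc\pmod 1$, the reduction of $\tau_d(x)$ to $\min\bigl(dx\bmod 1,\,1-(dx\bmod 1)\bigr)$ is exactly the definition of leaf length applied to the image chord, and the identification of the piece on $I_k$ with $dx-j$ (for $k=2j$) or $(j+1)-dx$ (for $k=2j+1$) matches the stated table, including the last two lines once you run the parity split on $d$ that you indicate; I verified both parities and the endpoint cases $x=0,\tfrac12$ agree with the formula and with the degenerate/critical/diameter behavior of $\sigma_d$.
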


The proof is left to the reader.

\begin{prop}\label{fixedlengths}
The fixed points of $\tau_d$ are of the form $$\{0,\frac{1}{d+1},\frac{1}{d-1},\frac{2}{d+1},\frac{2}{d-1},\dots, \frac{j}{d+1},\frac{j}{d-1}, \frac{j+1}{d+1} \dots, \le\frac12\}.$$  Thus, if $\frac{j}{d+1}<|\ell|<\frac{j}{d-1}$ for some $j$, then $|\sigma_d(\ell)|<|\ell|$, and if $\frac{j}{d-1}<|\ell|<\frac{j+1}{d+1}$ for some $j$, then $|\sigma_d(\ell)|>|\ell|$.
\end{prop}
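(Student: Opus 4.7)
The plan is to find the fixed points of $\tau_d$ by solving $\tau_d(x)=x$ on each linear piece separately, then read off the sign of $\tau_d(x)-x$ between consecutive fixed points from the behavior of those pieces.

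\textbf{Finding the fixed points.} Lemma~\ref{lem-tau} describes $\tau_d$ as a piecewise linear function on $[0,1/2]$ with slopes alternating between $+d$ and $-d$ on the intervals $[k/(2d),(k+1)/(2d)]$. On the slope $+d$ piece indexed by $i$, namely $[i/d,(2i+1)/(2d)]$, the formula is $\tau_d(x)=dx-i$, and solving $dx-i=x$ gives the candidate fixed point $x=i/(d-1)$; a routine check shows this candidate lies in its piece precisely when $2i\le d-1$. Symmetrically, on the slope $-d$ piece $[(2i-1)/(2d),i/d]$ the formula is $\tau_d(x)=i-dx$, and solving $i-dx=x$ gives $x=i/(d+1)$, which lies in its piece precisely when $2i\le d+1$. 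The interlacing inequality $j/(d+1)<j/(d-1)<(j+1)/(d+1)$ (valid once $d$ is large enough; for $d\in\{2,3\}$ some of these collapse at or past $1/2$) then arranges all candidates into the stated list.

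\textbf{Determining the sign between consecutive fixed points.} Because $\tau_d$ is piecewise linear with slopes $\pm d$, at every fixed point the slope of $\tau_d$ differs in magnitude from the slope $1$ of the line $y=x$; consequently $\tau_d$ actually crosses $y=x$ (rather than merely touching it) at each fixed point, so the sign of $\tau_d(x)-x$ is constant on each open interval between consecutive fixed points. It therefore suffices to evaluate at one convenient point in each interval. On $(j/(d+1),j/(d-1))$, the trough $x=j/d$ lies in this interval and $\tau_d(j/d)=0<j/d$, so $\tau_d(x)<x$ throughout. On $(j/(d-1),(j+1)/(d+1))$, the peak $x=(2j+1)/(2d)$ lies in the interval and $\tau_d((2j+1)/(2d))=1/2>(2j+1)/(2d)$, so $\tau_d(x)>x$ throughout. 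This yields the two inequalities claimed in the proposition.

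\textbf{Expected obstacle.} The argument is essentially a bookkeeping exercise with no conceptual obstacle. The only care required is ensuring that each candidate fixed point lies in the piece that produced it, and handling the right endpoint $x=1/2$ (which, when $d$ is odd, is the common fixed point produced by the last slope $+d$ piece and the degenerate slope $-d$ piece meeting at $1/2$).
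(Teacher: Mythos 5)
The paper leaves this proof to the reader, so there is no proof in the text to compare against. Your argument is correct and is the natural one: you solve $\tau_d(x)=x$ on each linear piece of Lemma~\ref{lem-tau}, check membership of each candidate in its piece, and read off the sign of $\tau_d(x)-x$ between consecutive fixed points by evaluating at the troughs $j/d$ (where $\tau_d=0$) and peaks $(2j+1)/(2d)$ (where $\tau_d=\tfrac12$); this is precisely the intended verification.
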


The proof is left to the reader.

\begin{lem}\label{lem-length}
A leaf $\ell$ of length $|\ell|<\frac1{d+1}$ will keep increasing in
length under iteration of $\sigma_d$ until the length of some
iterate $|\sigma_d^i(\ell)|\ge \frac1{d+1}$.
\end{lem}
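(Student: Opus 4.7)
The plan is to combine Proposition~\ref{fixedlengths} (which tells us lengths strictly increase in the region below $\tfrac{1}{d+1}$) with the explicit piecewise formula for $\tau_d$ from Lemma~\ref{lem-tau} (which guarantees the growth is fast enough to escape).

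First, I would apply Proposition~\ref{fixedlengths} with $j=0$: the interval $(0,\tfrac{1}{d+1})$ lies strictly between the fixed points $0 = \tfrac{0}{d-1}$ and $\tfrac{1}{d+1}$, so $\tau_d(x)>x$ for every $x$ in this range. Consequently, as long as an iterate satisfies $|\sigma_d^i(\ell)|<\tfrac1{d+1}$, the next iterate is strictly longer, so the sequence of lengths $|\sigma_d^i(\ell)|$ is monotonically increasing until it first meets or exceeds $\tfrac1{d+1}$. The remaining issue is to rule out the possibility that the sequence stays forever in $(0,\tfrac1{d+1})$ and only approaches $\tfrac1{d+1}$ in the limit.

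To show finite escape, I would split $(0,\tfrac{1}{d+1})$ into $(0,\tfrac{1}{2d}]$ and $(\tfrac{1}{2d},\tfrac{1}{d+1})$, noting that $\tfrac{1}{2d}<\tfrac{1}{d+1}$ since $d\ge 2$. On the first subinterval, Lemma~\ref{lem-tau} gives $\tau_d(x)=dx$, so each iteration multiplies length by at least $d\ge 2$. Starting from the assumed positive length $|\ell|>0$, this geometric growth forces some iterate to exceed $\tfrac{1}{2d}$ after at most $\lceil\log_d\tfrac{1}{2d|\ell|}\rceil$ steps. On the second subinterval, Lemma~\ref{lem-tau} gives $\tau_d(x)=1-dx$, a decreasing linear function with $\tau_d(\tfrac{1}{2d})=\tfrac12$ and $\tau_d(\tfrac{1}{d+1})=\tfrac{1}{d+1}$; hence $\tau_d(x)>\tfrac{1}{d+1}$ strictly on $(\tfrac{1}{2d},\tfrac{1}{d+1})$. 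So as soon as an iterate lands in this strip, the very next iterate exceeds $\tfrac{1}{d+1}$.

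Combining these pieces gives the claim: the iterates grow geometrically while in $(0,\tfrac{1}{2d}]$, and after exiting that sub-interval they either already satisfy $|\sigma_d^i(\ell)|\ge\tfrac{1}{d+1}$ or do so after one further application of $\sigma_d$. The main obstacle, such as it is, is purely bookkeeping: $\tau_d$ is only piecewise linear, so one must be careful that escaping the \emph{expanding} piece $[0,\tfrac1{2d}]$ does not simply dump the iterate into a decreasing piece that oscillates back. The computation at $x=\tfrac{1}{d+1}$ (a fixed point of $\tau_d$) together with the monotone decrease of $\tau_d$ on $[\tfrac{1}{2d},\tfrac{1}{d+1}]$ is what rules out this scenario cleanly.
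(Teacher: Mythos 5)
Your proof is correct and rests on the same core observation as the paper's: on its first two linear pieces $\tau_d(x)=dx$ and $\tau_d(x)=1-dx$ (from Lemma~\ref{lem-tau}), $\tau_d$ exceeds the identity on $(0,\tfrac1{d+1})$. Where you go further is in the escape-in-finite-time step: the paper's proof simply asserts that since $\tau_d > \mathrm{id}$ on $(0,\tfrac1{d+1})$ ``the length will keep increasing under iteration until it is at least $\tfrac1{d+1}$,'' which leaves open, at least as stated, the possibility of an increasing sequence converging to $\tfrac1{d+1}$ from below without ever reaching it. Your split of $(0,\tfrac1{d+1})$ into the expansion-by-$d$ piece $(0,\tfrac1{2d}]$ and the ``one-step escape'' piece $(\tfrac1{2d},\tfrac1{d+1})$---where $\tau_d$ is decreasing from $\tfrac12$ down to $\tfrac1{d+1}$ and hence sends every interior point strictly above $\tfrac1{d+1}$---explicitly rules this out and makes the argument airtight. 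So this is essentially the paper's approach with the implicit finite-escape step carried out carefully, which is a genuine strengthening of the write-up even though it is not a different method.
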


\begin{proof}
Consider $\tau_d$ and the identity function. Each monotone interval
 of the map $\tau_d$ and the identity function are linear. The first
interval of $\tau_d$, defined on $[0,1/2d]$, has the equation
$\tau_d(x)=dx$, therefore $(0,0)$ is the only intersection on the
first interval. On the second interval, $[1/2d, 1/d]$,
$\tau_d(x)=1-dx$. Therefore the intersection is $1-dx=x$ or
$x=\frac1{d+1}$. Since  the graph of $\tau_d$ is above the identity
function on interval $(0,\frac1{d+1})$, the length will keep
increasing under iteration until it is at least $\frac1{d+1}$.
\end{proof}

\subsection{Sibling Invariant Laminations}

Thurston's definition \cite{Thurston:2009} of invariant laminations did not involve
sibling leaves.  Blokh, Mimbs,  Oversteegen, and Valkenburg showed in \cite{Mimbs:2013} that each {\em sibling invariant lamination} (defined below) is a Thurston lamination, and each lamination induced by a locally
connected Julia set is a sibling invariant lamination.  They showed that to
understand Julia sets via laminations, it is sufficient to consider
sibling laminations.  (More precisely, they showed that the closure of the space of quadratic sibling laminations in the Hausdorff metric contains all laminations induced by locally connected Julia sets.)

\begin{mydef}\label{def-sibling}(Sibling Leaves)
Let $\ell_1\in\lam$ be a  leaf  and suppose
$\sigma_d(\ell_1)=\ell'$, for some non-degenerate leaf $\ell'\in\lam$.
A  leaf
$\ell_2\in\lam$, disjoint from $\ell_1$, is called a {\em sibling} of
$\ell_1$ provided $\sigma_d(\ell_2)=\ell'=\sigma_d(\ell_1)$.  A collection
${\mathcal S}=\{\ell_1, \ell_2, \dots, \ell_d\}\subset\lam$ is called a
{\em full sibling collection} provided that for each $i$,
$\sigma_d(\ell_i)=\ell'$ and for all $i\not=j$,
$\ell_i\cap\ell_j=\0$.
\end{mydef}

\begin{defn} \label{sibinvariant}
A lamination $\lam$ is said to be {\em sibling $d$-invariant} (or simply {\em invariant} if no confusion will result) provided that
\begin{enumerate}
\item (Forward Invariant) For every $\ell\in\lam$, $\sigma_d(\ell)\in\lam$.
\item (Backward Invariant) For every non-degenerate $\ell'\in\lam$,
there is a leaf $\ell\in\lam$ such that $\sigma_d(\ell)=\ell'$.
\item (Sibling Invariant) For every $\ell_1\in\lam$ with $\sigma_d(\ell_1)=\ell'$,
a non-degenerate leaf, there is a full sibling collection $\{\ell_1, \ell_2, \dots, \ell_d\}\subset\lam$ such that  $\sigma_d(\ell_i)=\ell'$.
\end{enumerate}
\end{defn}

\begin{defn} A {\em gap} in a lamination $\lam$ is the closure of a component of $\cl\disk\setminus\lam^*$.  A gap is {\em critical} iff two points in its boundary map to the same point.  A finite gap is usually called a {\em polygon}.  The leaves bounding a finite gap are called the {\em sides} of the polygon.  A polygon  is called {\em all-critical} if every side is a critical leaf.\end{defn}

\begin{defn}[Sibling Portrait]\label{sib-portrait}
The {\em sibling portrait} $S$ of a full collection of sibling
leaves is the collection of regions complementary to the sibling
leaves. We call a complementary region a {\em C-region} provided all
of the arcs in which the closure of the region meets the circle are
{\em short} (length $<\frac1{2d}$), and call it an {\em R-region} if
all of the arcs are {\em long} (length $>\frac1{2d}$).  
The {\em
degree} of a complementary region $T$, denoted $\text{deg}(T)$  is
number of leaves in the boundary of $T$ or, equivalently, the number
of circular arcs in the boundary of $T$. \end{defn}

$C$-regions which meet the circle in more than one short arc will constitute the components of the {\em central strip}, Definition~\ref{def-centralstrip}.  We show in Theorem~\ref{lem-centralstrip}, subject to the condition that no sibling maps to a diameter, that each region is either a $C$-region or an $R$-region.

Topologically, a {\em graph} is a finite union of arcs (homeomorphic images of the interval $[0,1]$) meeting only at endpoints.  Endpoints of these arcs are called {\em vertices} and the arcs themselves are called {\em edges}. The {\em degree} of a vertex $v$ is the number of edges that share $v$ as an endpoint.  A {\em tree} is a graph with no closed loops of edges in it.

\begin{defn}
The {\em dual graph} $T_{S}$ of the sibling portrait $S$ of a full collection $\mathcal S$ of sibling leaves is defined as follows: let each complementary region  correspond to a vertex of $T_{S}$ and each sibling leaf on the boundary of two regions correspond to an edge of $T_{S}$ between the vertices corresponding to the two regions.
\end{defn}

\begin{prop}
The dual graph of a sibling portrait under $\sigma_d$ is a connected tree consisting of $d+1$ vertices (components of the portrait), and $d$ edges (sibling leaves between components that meet on their boundaries).
\end{prop}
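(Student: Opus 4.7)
The plan is to induct on $d$, using as the essential geometric input the fact that $d$ pairwise disjoint chords in $\cl\disk$ partition the closed disk into exactly $d+1$ closed regions, with each chord appearing on the common boundary of exactly two of them. Once this is granted, the claimed vertex and edge counts follow immediately from the definition of the dual graph, and the tree property can be verified inductively.

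For the base case $d=1$, a single non-degenerate leaf cuts $\cl\disk$ into two closed regions that share exactly one chord, so $T_{S}$ has two vertices and one edge and is trivially a tree. For the inductive step, assume the dual graph of any sibling portrait coming from $d-1$ pairwise disjoint leaves is a tree $T'$ with $d$ vertices and $d-1$ edges. Given a full sibling collection $\{\ell_1,\dots,\ell_d\}$, distinguish one leaf $\ell_d$ and consider the dual graph $T'$ of $\{\ell_1,\dots,\ell_{d-1}\}$. Since $\ell_d$ is disjoint from $\ell_1,\dots,\ell_{d-1}$, its relative interior lies in exactly one complementary region $R$ of $T'$, and $\ell_d$ separates $R$ into exactly two subregions $R_1,R_2$. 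Each of $\ell_1,\dots,\ell_{d-1}$ is disjoint from $\ell_d$, so any such leaf on the boundary of $R$ lies in the closure of exactly one of $R_1$ or $R_2$. Consequently, the new dual graph $T_S$ is obtained from $T'$ by splitting the vertex $R$ into two new vertices $R_1,R_2$, distributing the edges of $T'$ formerly incident to $R$ between them according to the side of $\ell_d$ on which the corresponding chord lies, and adding exactly one new edge between $R_1$ and $R_2$ corresponding to $\ell_d$. This operation preserves both connectedness and acyclicity, so $T_{S}$ is a tree with $d+1$ vertices and $d$ edges.

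The only step requiring care is the elementary plane-topology claim invoked in the inductive step: that a chord of $\cl\disk$ disjoint from each member of a finite pairwise disjoint family of chords lies in a unique complementary region and cuts that region into exactly two subregions. This is the main (and really only) obstacle; once it is established, the inductive accounting is routine. I would justify it by noting that the union of finitely many pairwise disjoint chords together with $\ucirc$ is a compact $1$-complex in $\cl\disk$ whose complement has finitely many components, and that a new disjoint chord, having its relative interior in the open complement, must lie in one component $R$ (by connectedness of its relative interior) and, as an arc crossing $R$ with both endpoints on $\bd R$, separates $R$ into exactly two pieces.
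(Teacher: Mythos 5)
The paper itself leaves the proof of this proposition to the reader, so there is no authoritative argument to compare against; the task is simply to check whether your argument is sound.

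Your inductive argument is essentially correct, and you have correctly isolated the one non-trivial geometric input (a new disjoint chord lies in a single Jordan subregion and separates it into exactly two pieces) and the combinatorial bookkeeping (vertex-splitting preserves the tree property). One small but worth-fixing slip is in the phrasing of the inductive hypothesis: you state it as a claim about ``sibling portraits coming from $d-1$ pairwise disjoint leaves,'' but deleting one leaf from a full $\sigma_d$-sibling collection does not yield a full $\sigma_{d-1}$-sibling collection --- the remaining $d-1$ leaves are still $\sigma_d$-siblings and need not be siblings for any lower-degree map. Your argument, as you actually run it, never uses the sibling structure at all; it works for an arbitrary finite family of pairwise disjoint chords. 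So the inductive hypothesis should be stated in that generality (``the dual graph of any family of $k$ pairwise disjoint chords in $\cl\disk$ is a tree with $k+1$ vertices and $k$ edges''), after which the desired proposition for sibling portraits follows as the special case $k=d$. With that rephrasing the proof is complete. It is also worth noting that this yields a slightly more general fact than the stated proposition, and that the same count can be read off directly from the Euler characteristic of a contractible planar graph (as the paper does for Proposition~\ref{degreeformula}); your induction is the more elementary route and makes the ``connected and acyclic'' conclusions explicit rather than packaged into a single Euler-characteristic computation.
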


The proof is left to the reader.  See Figure~\ref{treepic} for an example. 

\begin{prop}
Let $S$ be the sibling portrait of a full collection $\mathcal S$ of sibling leaves under $\sigma_d$.  Let $T$ denote a complementary region of $S$ and $T'$ the corresponding vertex of the dual graph $T_{S}$.  Then $deg(T)=deg(T')$.
\end{prop}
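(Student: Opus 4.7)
My plan is to set up a bijection between the sibling leaves lying on the boundary of $T$ and the edges of $T_S$ incident to $T'$; the equality of degrees then follows immediately. The single observation underlying everything is that a chord of $\cl\disk$ with endpoints on $\ucirc$ separates $\cl\disk$ into two closed half-disks.

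First I would record that, by Definition~\ref{def-sibling}, a full sibling collection $\mc S$ consists of $d$ pairwise disjoint chords of $\cl\disk$, whose union cuts $\cl\disk$ into exactly $d+1$ closed complementary regions (the previous proposition). Fix any $\ell\in\mc S$: since $\ell$ is a chord, $\cl\disk\sm\ell$ has two open components, and any path in $\cl\disk\sm(\cup\mc S)$ joining interior points of those two components would have to cross $\ell$. Hence the two sides of $\ell$ lie in two distinct complementary regions of $S$, so $\ell$ lies on the common boundary of exactly two complementary regions.

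Next, let $\ell_1,\dots,\ell_k$ enumerate the sibling leaves on $\bd T$, so by Definition~\ref{sib-portrait} we have $\deg(T)=k$. By the previous step, for each $i$ there is a unique complementary region $T_i\ne T$ with $\ell_i\subset\bd T\cap\bd T_i$, and this contributes an edge of $T_S$ from $T'$ to the vertex associated with $T_i$. Conversely, every edge of $T_S$ incident to $T'$ arises by definition from a sibling leaf shared between $T$ and some other complementary region, and such a leaf necessarily lies on $\bd T$. Since distinct sibling leaves of $\mc S$ correspond to distinct edges of $T_S$, we obtain a bijection between $\{\ell_1,\dots,\ell_k\}$ and the edges of $T_S$ incident to $T'$, whence $\deg(T')=k=\deg(T)$.

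I do not foresee any serious obstacle here; the content is a direct consequence of the Jordan separation property for chords of the disk, combined with the definitions of the sibling portrait and its dual graph. The only subtlety I would make sure to state cleanly is that disjointness of the chords in $\mc S$ is what prevents the two sides of a fixed leaf from belonging to the same complementary region.
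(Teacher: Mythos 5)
Your proof is correct; the paper leaves this proposition to the reader, and your argument --- each chord of $\cl\disk$ separates it, so each sibling leaf lies on the boundary of exactly two complementary regions, yielding a bijection between the boundary leaves of $T$ and the edges of $T_{S}$ at $T'$ --- is the natural one. One small refinement to your closing remark: Jordan separation by the single chord $\ell$ already forces the two sides of $\ell$ into distinct complementary regions, irrespective of the other chords; what the pairwise disjointness of $\mc S$ actually buys you is that each leaf appears as a whole edge of a region's boundary (no subdivision by crossings or shared endpoints), which is what makes the two counts in Definition~\ref{sib-portrait} of $\operatorname{deg}(T)$ agree and be well-defined.
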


The proof is left to the reader.

In Theorem~\ref{lem-centralstrip} below, we show that if the image leaf of a full sibling collection is not a diameter, then each of the
complementary regions of the sibling portrait is either a C-region or
an R-region of degree $d\ge 2$, or a {\em terminal} C- or R-region of
degree $1$. (By ``terminal'' region we mean a region corresponding to an endpoint of the dual graph.) Examples of sibling portraits for degrees $d=2$ and $d=3$
are in Figures~\ref{sibportpic2} and  \ref{sibportpic3}.
Figure~\ref{sibportpic} shows one of many possibilities for a
sibling portrait for $\sigma_6$.

 \begin{figure}[h]
    \begin{center}
   \includegraphics[scale=1]{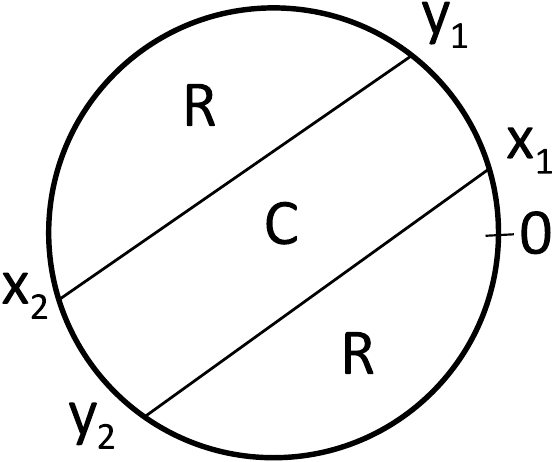}
    \caption{Example of a sibling portrait with a central strip for $\sigma_2$.}  \label{sibportpic2}
    \end{center}
  \end{figure}
  
  \begin{rem}
  Note that up to labeling of vertices and rotation, there are only two sibling portraits for $d=3$.  Moreover, the sibling leaves, $\ell_1$, $\ell_2$, and $\ell_3$ in the non-symmetric case are of three different lengths: $\frac13<|\ell_1|<\frac12$, $\frac16<|\ell_2|<\frac13$, and $|\ell_3|<\frac16$.  See Theorem~\ref{centraltree} for the count for $d>3$.
  \end{rem}

   \begin{figure}[h]
    \begin{center}
    \includegraphics[scale=0.9]{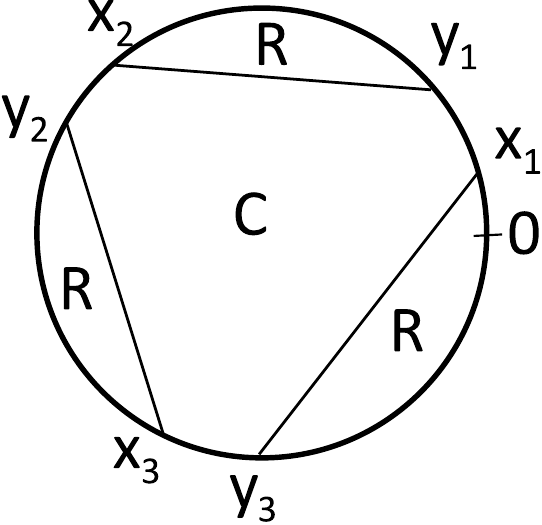}  \hspace{20pt}
     \includegraphics[scale=0.9]{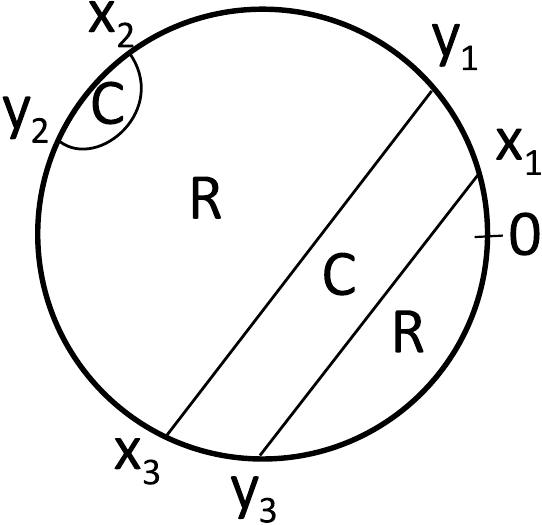}
    \caption{Examples of all sibling portraits with a central strip for $\sigma_3$
    with the same spacing of $x_i$ and $y_i$ points, up to rotational symmetry.}   \label{sibportpic3}
    \end{center}
  \end{figure}

 \begin{figure}[h]
    \begin{center}
    \includegraphics[scale=0.4]{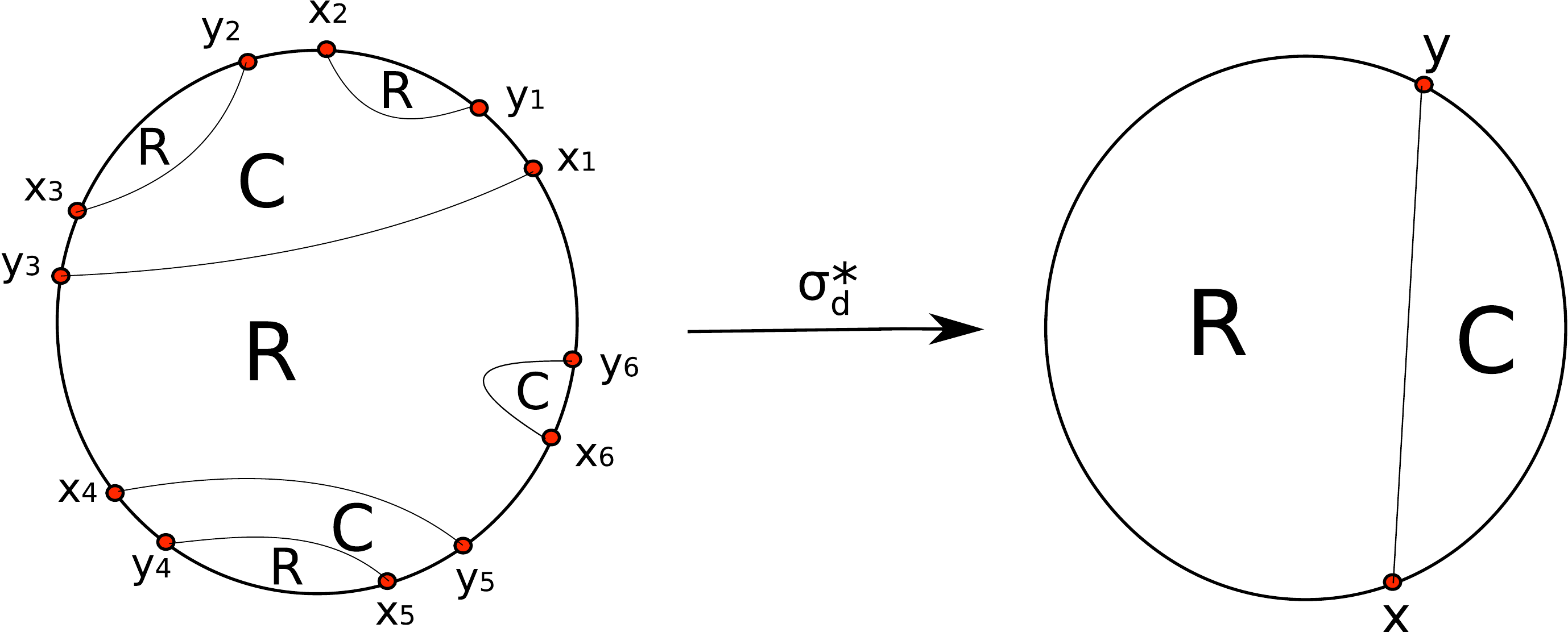}
    \caption{Example of a sibling portrait for $\sigma_6$. (Not to scale; though chords are straight, we sometimes draw them curved to stand out.)}    \label{sibportpic}
    \end{center}
  \end{figure}

\begin{prop}\label{degreeformula}
Suppose $S$ is a sibling portrait.  Then the following formula holds:
\[
\sum_{T\in S} (deg(T)-1)=d-1
\]
\end{prop}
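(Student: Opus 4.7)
The plan is to translate the statement entirely into the dual graph $T_S$ and apply the handshake lemma. By the previous proposition, $\deg(T) = \deg(T')$ for the corresponding vertex $T'$ of $T_S$, so it suffices to prove
\[
\sum_{v \in V(T_S)} (\deg(v) - 1) = d - 1.
\]

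First I would recall from the excerpt that $T_S$ is a connected tree with exactly $d+1$ vertices and exactly $d$ edges. Next, I would apply the handshake lemma to $T_S$: the sum of vertex degrees counts each edge twice, so
\[
\sum_{v \in V(T_S)} \deg(v) = 2 \cdot |E(T_S)| = 2d.
\]
Subtracting $1$ at each of the $d+1$ vertices gives
\[
\sum_{v \in V(T_S)} (\deg(v) - 1) = 2d - (d+1) = d - 1,
\]
and the proposition follows.

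There is essentially no obstacle here; the only thing to double-check is that the two prior propositions (the dual graph is a tree on $d+1$ vertices with $d$ edges, and the degree in the portrait equals the degree in the dual graph) are legitimately in hand, which they are since they appear just above the statement.
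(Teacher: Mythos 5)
Your proof is correct and is essentially the paper's argument made explicit: the paper simply cites ``the Euler characteristic of a tree,'' which amounts to the same $V - E = 1$ fact that you combine with the handshake lemma to get $\sum(\deg(v)-1) = 2E - V = 2d - (d+1) = d-1$.
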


The proof is a direct consequence of the Euler characteristic of a tree.

\begin{rem}
Note that the degree of a  C-region $T$ is the number of times the
boundary of $T$ wraps, under $\sigma_d$, around the analog of the sector of the disk
labeled C on the right side of Figure~\ref{sibportpic}.  A similar
statement holds for R-regions.
\end{rem}

\section{Central Strips}
Let $\mathcal L$ be a sibling $d$-invariant lamination.

\begin{rem}\label{12d} If $\mathcal S$ is  a  full
sibling collection mapping to leaf $\ell=\overline{xy}$, then endpoints $x_i,y_i$ of the preimage leaves alternate
counterclockwise around $\ucirc$:
$x_1<y_1<x_2<y_2<\dots<x_d<y_d<x_1$. (Here we do not  suppose that
$\ell_i=\cl{x_iy_i}$.)  If a leaf is a multiple of
$\frac1{2d}$ long, then it maps to a leaf of length $\frac12$, a diameter. A diameter leaf is either of fixed length or is critical (depending upon
whether $d$ is odd or even).  As these can be handled as special
cases, we consider only full sibling collections not having any leaf
mapping to a diameter.
\end{rem}

\begin{mydef}\label{def-centralstrip}
(Central Strip) Consider the sibling portrait of a full collection
$\mathcal S$ of sibling leaves. Then the {\em central strip} $C$
corresponding to $\mathcal S$  is the closure of the union of all
C-regions $C_i$ with degree at least $2$. The {\em degree} of the
central strip is $deg(C)=\min \{deg(C_i)\}$.
\end{mydef}

A tree with vertices labeled with two colors, and such that no edge connects vertices of the same color, is said to be {\em bicolored}.

 \begin{figure}[h]
    \begin{center}
    \includegraphics[scale=0.25]{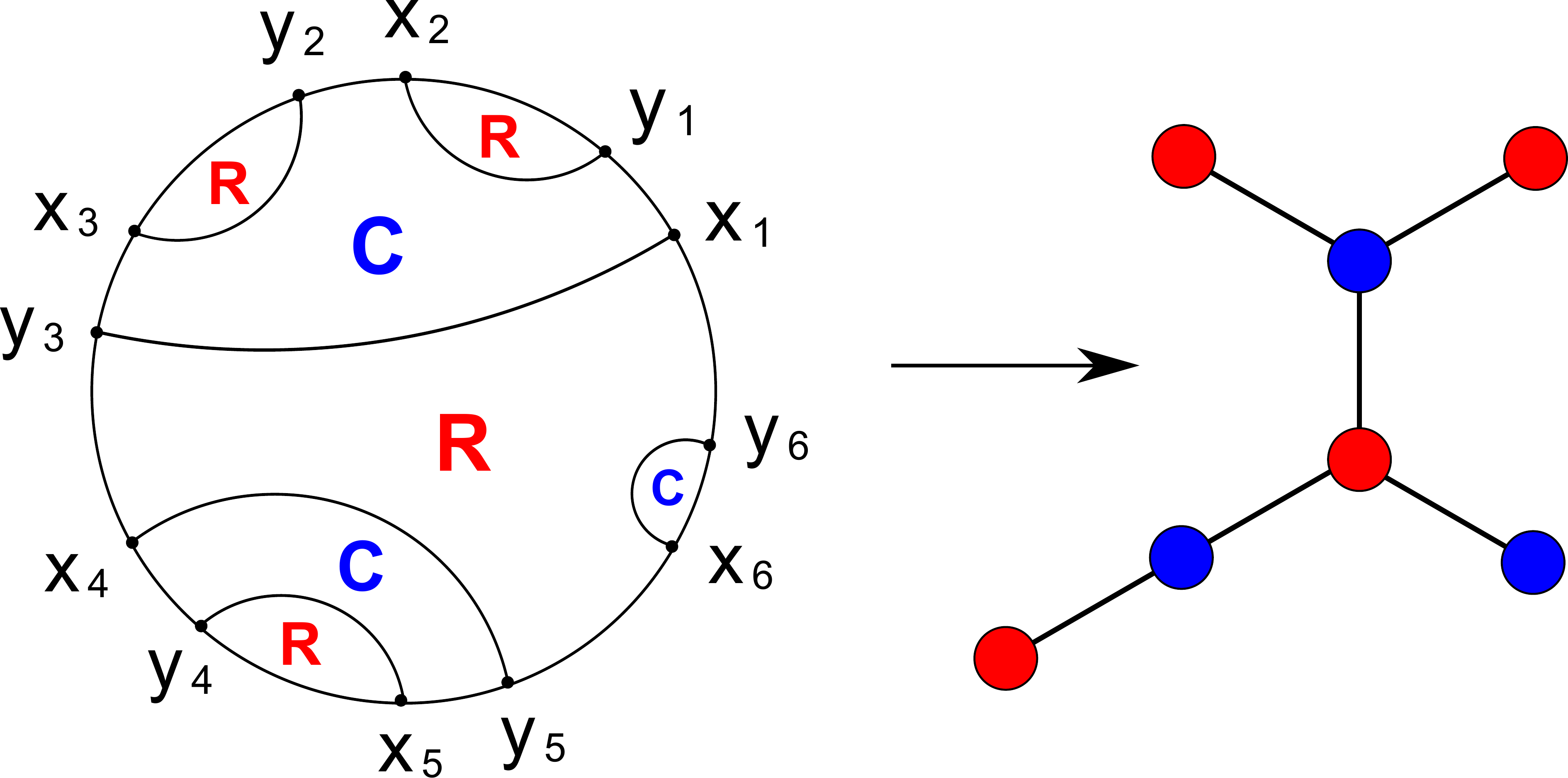}
    \caption{Mapping of a sibling portrait to a bicolored tree.}    \label{treepic}
    \end{center}
  \end{figure}

\begin{thm}[Central Strip Structure] \label{lem-centralstrip}  Let
$\mathcal S$ be a full
sibling collection of leaf $\ell_1$ and its siblings such that $\sigma_d(\ell_1)$ is not a diameter. Then  the following hold.
\begin{enumerate}
\item If some leaf in $\mathcal S$ is of length $>\frac1{2d}$, then  there is a nonempty central strip $C$.
\item The dual graph of the sibling portrait corresponding to 
$\mathcal S$
is  a planar bicolored tree where C-regions are colored one color and R-regions the other.
\end{enumerate}
\end{thm}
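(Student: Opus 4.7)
The plan is to set up the $2d$ arcs between consecutive endpoints of $\mathcal{S}$ on $\ucirc$, classify each arc as short or long, and then track which arcs bound which complementary region. Write $\sigma_d(\ell_1)=\overline{xy}$ and let $L=|(x,y)|$; by hypothesis $L\neq\tfrac12$. By Remark~\ref{12d} the $2d$ endpoints alternate as $x_1<y_1<x_2<y_2<\cdots<x_d<y_d$, with $\sigma_d(x_i)=x$ and $\sigma_d(y_i)=y$. Label the $2d$ arcs in ccw order $A_1=(x_1,y_1),\ A_2=(y_1,x_2),\ A_3=(x_2,y_2),\dots$, so odd-indexed arcs have length $L/d$ (mapping to the ccw arc from $x$ to $y$) while even-indexed arcs have length $(1-L)/d$ (mapping to the ccw arc from $y$ to $x$). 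Since $L\neq\tfrac12$, exactly one parity class is strictly short and the other strictly long, and these two types alternate around the circle.

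For (2), the crux is showing every complementary region $T$ has boundary arcs all of one parity. Walking ccw around $\partial T$, two consecutive boundary arcs $A_j$ and $A_{j'}$ are separated by a sibling leaf whose endpoints are $z_{j+1}$ and $z_{j'}$ in the cyclic labeling $z_1=x_1,z_2=y_1,z_3=x_2,\dots$. Because every sibling leaf has image $\overline{xy}$, its endpoints consist of one $x$-preimage (odd $z$-index) and one $y$-preimage (even $z$-index); thus $j+1$ and $j'$ have opposite parities, forcing $j$ and $j'$ to share a parity. Induction around $\partial T$ then classifies $T$ as a C-region or an R-region. For the bicoloring, the two arcs of the neighbouring region $T'$ adjacent to the shared leaf are $A_{j+1}$ and $A_{j'-1}$, both of parity opposite to $j$; hence $T'$ has the opposite type to $T$, which bicolors the planar tree dual graph.

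For (1), I would argue the contrapositive. Without loss of generality $L<\tfrac12$, so short arcs are the $A_{2i-1}=(x_i,y_i)$. Suppose no C-region has degree $\geq 2$; then each of the $d$ short arcs lies in a terminal (degree-$1$) C-region, whose single boundary leaf must have precisely the two endpoints of that short arc, i.e., the form $\overline{x_iy_i}$. Since the $d$ sibling leaves partition the $2d$ endpoints into $d$ disjoint pairs, each consisting of one $x$-preimage and one $y$-preimage, the $d$ sibling leaves would then be exactly the leaves $\overline{x_iy_i}$, each of length $L/d<1/(2d)$. Contrapositively, as soon as some sibling leaf has length $>1/(2d)$, some short arc lies in a region bounded by at least two sibling leaves, giving a C-region of degree $\geq 2$ and hence a nonempty central strip.

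The main obstacle I expect is the parity bookkeeping for part (2): verifying that walking ccw around $\partial T$ from $A_j$ across a leaf to $A_{j'}$ really produces the endpoint labels $z_{j+1}$ and $z_{j'}$, and that the corresponding arcs of $\partial T'$ shift by exactly $\pm 1$ in their $A$-indices. Once this orientation calculation is pinned down, both parts follow cleanly from the short/long alternation of the $A_i$ established in the setup.
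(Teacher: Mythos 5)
Your proof is correct and follows essentially the same route as the paper: both rest on the observation that, walking around $\partial T$, consecutive boundary arcs are joined by a sibling leaf connecting an $x$-preimage to a $y$-preimage, which forces all arcs of $T$ to have the same type and arcs of the adjacent region across a shared leaf to have the opposite type. The main cosmetic differences are that you track this via parity of the $z$-index while the paper tracks it by following the image of the traversal (alternating the $x$-to-$y$ arc with the $y$-to-$x$ leaf traversal), and that your part (1) is a contrapositive (all short leaves $\Rightarrow$ only degree-1 C-regions) while the paper argues directly that a long leaf $\ell_i$ places $(x_i,y_i)$ in a degree-$\geq 2$ C-region.
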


\begin{proof}
Suppose ${\mathcal
S}=\{\ell_1,\ell_2,\dots,\ell_d\}$. As in Remark~\ref{12d}, we 
label the endpoints of the sibling leaves $x_1<
y_1<\dots<x_d<y_d<x_1$ in counter-clockwise order so that $x_i$ is
an endpoint of $\ell_i$. Note that the lengths of circular arcs
between successive $x$ and $y$ points alternate between {\em short}
(length $<\frac1{2d}$) and {\em long} (length $>\frac1{2d}$, provided the full sibling collection does not map to a diameter. Without loss of generality assume $(x_i,y_i)$ is short.

To prove (1), assume that some leaf is more than
$\frac1{2d}$ long.  Let $\ell_i$ be a long leaf. We know by
assumption that $\ell_i=\cl{x_iy_j}$, for some $j\not=i$.  We claim that the region $T$ with arc $(x_i,y_i)$ in its boundary is a C-region and the region on the other side of $\ell_i$ is an R-region.  To see this we traverse the region $T$ with arc $(x_i,y_i)$ in its boundary counterclockwise.  Refer to Figure~\ref{sibportpic}.  All sibling leaves map to a single image leaf $\overline{xy}$.  All short arcs map to the shorter (counterclockwise) arc $(x,y)$, and all long arcs map to the longer (counterclockwise) arc $(y,x)$.   Thus, as we move from $x_i$ to $y_i$ in the domain, we traverse the (shorter) arc $(x,y)$ in the range; then as we move from $y_i$ along a leaf emanating from it, we traverse the leaf $\overline{xy}$ from $y$ to $x$ in the range.  Since short and long arcs alternate, we are now again at some $x_k$ in the domain; we cannot be at $x_i$ again, else $\ell_i$ with endpoint $x_i$ is a short leaf; we thus traverse the arc $(x_k,y_k)$ in the domain while we again traverse the arc $(x,y)$ in the range.  Proceeding counterclockwise around the region $T$, we see that we encounter only short arcs in the domain each mapping to the counterclockwise arc $(x,y)$ in the range.  Note that we traversed at least two short arcs in the domain: $(x_i,y_i)$ and $(x_k,y_k)$.  Moreover, we encounter only long leaves in the boundary of $T$.  Thus, $T$ is a C-region with only long leaves and short arcs in its boundary, and $deg(T)\ge 2$.

On the other side of leaf $\ell_i$, by a similar argument, the region $T'$ is an R-region bounded only by long arcs, though it may have both long and short leaves in its boundary.  However, the short leaves in the boundary of $T'$ can bound only degree 1 C-regions.  Thus, $T'$, sharing long boundary leaf $\ell_i$ with $T$, is an R-region.

In the above argument, we have shown both that when some leaf has length $>\frac1{2d}$, the central strip is nonempty, establishing conclusion (1) of the theorem, for we found at least one C-region of degree $\ge 2$, and that any leaf bounds a C-region on one side and an R-region on the other.  So, if a pair of complementary regions share a boundary leaf, then one is a C-region and the other is an R-region.  

To prove (2), we form the dual graph $T_{S}$ of the complementary regions of the sibling collection $\mathcal S$ as follows: let each complementary region  correspond to a vertex of $T_{S}$ and each sibling leaf on the common boundary of two regions correspond to an edge of $T_{S}$.  Since $\cl\disk$ is connected and each sibling leaf disconnects $\cl\disk$, $T_{S}$ is a connected tree.  Refer to Figure~\ref{treepic}.  The proof of part (1) shows that the tree is bicolored, with C-regions being one color and R-regions the other.  If no leaves are of length $>\frac1{2d}$, then all leaves are short and the regions bounded by them are degree 1 C-regions.  The central region bounded by all of them and long arcs of the circle is a degree $d$ R-region.  In this case, there is no central strip.
  This completes part (2) of the proof.
\end{proof}

\begin{prop}
Let  $C_i$ enumerate the complementary components of the full sibling collection $\mathcal S$ meeting the circle in short arcs and $R_i$ enumerate the complementary components of the full sibling collection meeting the circle in long arcs.   Let $T_{S}$ be the corresponding bicolored tree, where  $C'_i$ and $R'_i$ denote vertices corresponding to regions $C_i$ and $R_i$, respectively.  Then 
 the number of edges of $T_{S}= $ $$\sum_i deg(C_i) = \sum_i deg(C'_i) = d = \sum_i deg(R'_i) =\sum_i deg(R_i).$$
\end{prop}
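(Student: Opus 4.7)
The plan is to assemble three results already in hand: (a) the earlier proposition that $T_S$ is a connected tree on $d+1$ vertices, which forces $|E(T_S)| = d$; (b) the earlier proposition that $\deg(T) = \deg(T')$ for each complementary region $T$ and its corresponding vertex $T'$; and (c) part (2) of Theorem~\ref{lem-centralstrip}, which says $T_S$ is a bicolored tree with the $C'_i$ comprising one color class and the $R'_i$ the other.

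First I would observe that because the tree is bicolored, every edge of $T_S$ has exactly one endpoint among the $C'_i$ and exactly one endpoint among the $R'_i$. This is the usual bipartite structure, so summing vertex-degrees over a single color class counts each edge exactly once:
\[
\sum_i \deg(C'_i) \;=\; |E(T_S)| \;=\; \sum_i \deg(R'_i).
\]
Combining this with fact (a) immediately yields $\sum_i \deg(C'_i) = d = \sum_i \deg(R'_i)$.

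Next, applying (b) region by region gives $\deg(C_i) = \deg(C'_i)$ and $\deg(R_i) = \deg(R'_i)$, so summing over $i$ produces
\[
\sum_i \deg(C_i) = \sum_i \deg(C'_i) \qquad \text{and} \qquad \sum_i \deg(R_i) = \sum_i \deg(R'_i).
\]
Chaining these equalities together with the previous display establishes the full string of identities asserted in the proposition, and also identifies the common value as $|E(T_S)| = d$.

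There is no serious obstacle here: the content is a standard handshake count on a bipartite graph, and every ingredient has already been proved. The only point to be mindful of is that the bicoloring from Theorem~\ref{lem-centralstrip}(2) is precisely what makes the two color-class sums each equal to $|E(T_S)|$; without bipartiteness one would only get the usual factor-of-two handshake identity.
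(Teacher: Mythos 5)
Your proof is correct, and since the paper leaves this proof to the reader, it supplies exactly the argument the authors evidently intended: the proposition is placed immediately after Theorem~\ref{lem-centralstrip} and the two degree-related propositions precisely so that the bipartite handshake count you describe goes through. No gaps.
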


The proof is left to the reader.

\begin{prop} The maximum number of disjoint critical chords that can be contained in a component $T$ of a sibling portrait is $deg(T)-1$.
\end{prop}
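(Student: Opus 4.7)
Set $k=\deg(T)$. By Theorem~\ref{lem-centralstrip}, as $\partial T$ is traversed counterclockwise, $\sigma_d$ wraps $\partial T$ counterclockwise exactly $k$ times around $\partial T_0$, where $T_0\subset\cl\disk$ denotes the region bounded by $\overline{xy}$ together with one of the two arcs of $\ucirc$ joining $x$ and $y$ (the short arc $(x,y)$ when $T$ is a C-region, the long arc $(y,x)$ when $T$ is an R-region). On each of the $k$ circular arcs $A_1,\dots,A_k$ of $\partial T\cap\ucirc$, $\sigma_d$ restricts to an orientation-preserving bijection onto the corresponding boundary arc of $T_0$; in particular, any critical chord has its two endpoints in the interiors of two distinct arcs (injectivity on each arc rules out same-arc endpoints) and maps to a single point of $\partial T_0$. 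For the lower bound I would construct $k-1$ explicit disjoint critical chords: fix parameters $0<s_1<\dots<s_{k-1}<1$ and, for $i=1,\dots,k-1$, let $c_i$ be the chord joining the point of $A_i$ at $\sigma_d$-coordinate $s_i$ to the point of $A_{i+1}$ at the same coordinate; a short check of cyclic order (using $s_i<s_{i+1}$ to prevent the endpoints in $A_{i+1}$ from alternating with those of $c_{i+1}$) shows these chords are pairwise non-crossing with distinct endpoints.

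For the upper bound, suppose $c_1,\dots,c_n$ are disjoint critical chords in $T$; they subdivide $T$ into $n+1$ topological disks $T'_1,\dots,T'_{n+1}$. The plan is to show each $T'_m$ has wrapping number at least $1$ around $\partial T_0$. Traversing $\partial T'_m$ counterclockwise, each chord-piece maps to a single point of $\partial T_0$, whereas each sub-arc of some $A_j$ and each sibling leaf in $\partial T'_m$ maps monotonically counterclockwise along $\partial T_0$. Hence the image of $\partial T'_m$ is a closed curve in $\partial T_0$ whose angular displacement is a nonnegative integer multiple of $2\pi$. Summing these displacements over $m$ yields $2\pi k$ (each piece of $\partial T$ contributes to exactly one $T'_m$, while each chord-piece contributes zero), so once we know each $T'_m$ has strictly positive displacement we conclude $n+1\le k$.

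The main obstacle is showing the displacement for each $T'_m$ is strictly positive. A collection of disjoint chords with endpoints on $\partial T$ cannot enclose a subregion of $T$ disjoint from $\partial T$, since the bounding chords of such a region would have to meet pairwise at its vertices. Thus $\partial T'_m$ contains a nontrivial chunk of $\partial T$ between two consecutive chord-endpoints; and because the chord-endpoints lie in the \emph{interiors} of arcs, this chunk contains a sub-arc of some $A_j$ of positive length. Its $\sigma_d$-image is a nondegenerate counterclockwise arc in $\partial T_0$, so the angular displacement of $\partial T'_m$'s image is at least $2\pi$ and its wrapping number is at least $1$, yielding $n\le k-1$ and matching the lower bound.
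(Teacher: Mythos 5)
Your proof is correct, but it takes a genuinely different route from the paper's. The paper builds a graph $G(T)$ whose vertices are the $k$ arcs of $T\cap\ucirc$ and whose edges are the disjoint critical chords, and then shows $G(T)$ is a tree: acyclicity (hence at most $k-1$ edges) is proved by a combinatorial displacement argument exploiting the fact that both endpoints of a critical chord sit at the \emph{same} offset inside their respective arcs, so traversing a putative cycle forces a strict monotone increase in offsets and a self-intersection; connectivity of a maximal collection is proved by an exchange/augmentation argument. You instead run a purely topological degree argument: subdivide $T$ by the $n$ chords into $n+1$ regions, observe each region's boundary maps to a loop in $\partial T_0$ with nonnegative integer winding (critical chords collapse to points, the rest moves monotonically counterclockwise), show each winding is $\ge 1$ because every region must contain a nondegenerate piece of some $A_j$, and sum to get $n+1\le k$. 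Your approach does not need the ``same offset'' observation at all -- it only uses that critical chords map to points -- and it avoids the separate connectivity/maximality step since the winding bound alone gives the sharp inequality. The paper's approach buys a bit more structural information (any maximal disjoint family of critical chords in $T$ realizes a spanning tree of the arcs), while yours is shorter and more conceptual. Your lower-bound construction (chords between consecutive arcs at strictly increasing $\sigma_d$-coordinates) differs cosmetically from the paper's (all chords fanning out from the first arc) but works equally well. One small imprecision: the ``wraps $k$ times'' fact you attribute to Theorem~\ref{lem-centralstrip} is actually stated in a remark following Proposition~\ref{degreeformula} and is implicit in the theorem's proof rather than in its statement; citing that remark (or the proof) would be cleaner. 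Otherwise the argument is complete.
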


\begin{proof} 
Let $T$ be a component of a sibling portrait under $\sigma_d$ of degree $k\le d$.  Without loss of generality, let the (closed) arcs of $T\cap\ucirc$ be $${\mathcal V}=\{[x_1,y_1],[x_2,y_2],\dots,[x_k,y_k]\}.$$ Note that a critical chord must join one component of $T\cap\ucirc$ to another going from a point $x_i+\epsilon_i\in[x_i,y_i]$, $0\le\epsilon_i\le |(x_i,y_i)|$, to a point $x_j+\epsilon_i\in[x_j,y_j]$, $j\not= i$.  Let $\mathcal E$ be a maximal collection of disjoint critical chords in $T$.  It is not hard to see that the cardinality of $\mathcal E$ is at least $k-1$; just draw critical chords from $k-1$ different points of $(x_1,y_1)$ in succession to points, one in each of ${\mathcal V}-\{[x_1,y_1]\}$, in succession in counterclockwise order.

Let $G(T)$ be a graph whose vertices are the elements of $\mathcal V$ and whose edges are elements of $\mathcal E$ joining elements of $\mathcal V$.  
The proposition follows from the following claims about $G(T)$:
\begin{enumerate}
\item  There are no cycles in $G(T)$.
\item  $G(T)$ is connected.
\end{enumerate}
Given the claims, it follows that $G(T)$ is a tree with $k$ vertices, and thus $k-1$ edges.  Hence, $T$ contains at most $k-1$ disjoint critical chords.

To prove (1), suppose, by way of contradiction, that there is a cycle in $G(T)$ of length $m\le k$.  Without loss of generality, assume the cycle includes exactly the first $m$ elements of $\mathcal V$.  Then in $T$ there is a critical chord from a point $x_1+\epsilon_1\in(x_1,y_1)$, $\epsilon_1\ge 0$, to the point $x_2+\epsilon_1\in(x_2,y_2)$. Then there is a critical chord from a point $x_2+\epsilon_2\in(x_2,y_2)$, $\epsilon_2>\epsilon_1$ since the critical chords are disjoint, to the point $x_3+\epsilon_2\in(x_3,y_3)$.  Proceeding in this fashion around the cycle, we finally have a critical chord from a point $x_m+\epsilon_m\in(x_m,y_m)$, $\epsilon_m>\epsilon_{m-1}>\dots>\epsilon_1$, to a point $x_1+\epsilon_m\in(x_1,y_1)$.  But then the first and last critical chords meet in $T$ since $\epsilon_m>\epsilon_1$, a contradiction.

To prove (2), suppose, by way of contradiction, that $G_0$ and $G_1$ are two components of $G(T)$ that are adjacent in counterclockwise order of vertices on $\ucirc$.  Suppose that $(x_r,y_r)$ is the last vertex in $G_0$ and $(x_s,y_s)$ is the last vertex of $G_1$ in counterclockwise order.  Then we can add the critical chord from $y_r$ to $y_s$, connecting $G_0$ to $G_1$, contradicting maximality of $\mathcal E$. It may be necessary to move the endpoints in $\ucirc$ of up to two elements of $\mathcal E$ slightly if they happened to have an $y_r$ or $y_s$ as an endpoint.   
\end{proof}

\subsection{Central Strip Lemma}

The Central Strip Lemma for $\sigma_2$, stated below, was used by
Thurston \cite{Thurston:2009} to show that there could be no {\em wandering triangle} for
a lamination invariant under $\sigma_2$.  A {\em triangle} in a
lamination is a union of three leaves meeting only at endpoints
pairwise and forming a triangle inscribed in $\ucirc$.  A triangle
{\em wanders} if its forward orbit consists only of triangles (i.e.,
no side is ever critical), and no two images of the triangle ever
meet.  This was the first step in Thurston's classification of,  and
description of a parameter space for, quadratic laminations. The Central Strip Lemma for $\sigma_2$ is also used to show that any polygon that returns to itself must return transitive on its vertices; hence, an invariant quadratic lamination cannot have an identity return triangle (see Definition~\ref{id-return}). In a subsequent paper, we will recover and strengthen Kiwi's theorems that
a $d$-invariant lamination cannot have a wandering $(d+1)$-gon, nor an identity return $(d+1)$-gon.

\begin{thm}[Thurston]  Let $C$ be the central strip in a quadratic lamination of leaf $\ell$ with $|\ell|>\frac13$.  Then the following hold:
\begin{enumerate}
 \item The first image $\ell_1=\sigma_2(\ell)$ cannot reenter $C$.
 \item If an iterate
$\ell_j=\sigma_2^j(\ell)$ of $\ell$ reenters $C$, for least $j>1$,
then it must connect the two components of $C\cap\ucirc$.
\end{enumerate}
\end{thm}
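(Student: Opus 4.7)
The plan is to convert both parts into length constraints governed by the graph of $\tau_2$ and then run a backward induction on the orbit. Let $\ell'$ denote the sibling of $\ell$, so $C$ is the central strip bounded by $\ell$ and $\ell'$. Set $L=|\ell|=|\ell'|>\tfrac13$ and $s=\tfrac12-L<\tfrac16$; then the two components of $C\cap\ucirc$ are short arcs $I_1,I_2$ each of length $s$, the two arcs of $\ucirc$ outside $C$ (meeting the $R$-regions $R_1$ and $R_2$) each have length $L$, and $s<2s<L$.

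The first step is a length dichotomy for leaves of $\lam$. Any $m\in\lam$ lies in $\overline{C}$, $\overline{R_1}$, or $\overline{R_2}$, since it cannot cross $\ell$ or $\ell'$. If $m\subseteq\overline{R_1}\cup\overline{R_2}$, both endpoints lie on a single arc of length $L$, giving $|m|\le L$, with equality only when $m\in\{\ell,\ell'\}$. If $m\subseteq\overline{C}$, either both endpoints lie in one $\overline{I_i}$, so $|m|\le s$, or one lies in each $\overline{I_i}$, in which case $m$ spans $C$ and $|m|\ge L$. In particular, any leaf of length strictly greater than $L$ must lie in $C$ and span it.

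Part~(1) is then immediate: $|\ell_1|=\tau_2(L)=1-2L=2s$, which lies in the forbidden interval $(s,L)$, so $\ell_1\not\subset C$.

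For Part~(2), suppose toward a contradiction that $k\ge 2$ is least with $\ell_k\subset C$ but $\ell_k$ does not span, so $|\ell_k|\le s$. The two preimage branches of $\tau_2$ give $|\ell_{k-i}|\in\{|\ell_{k-i+1}|/2,\,(1-|\ell_{k-i+1}|)/2\}$ at each backward step $i\ge 1$. If the ``large'' branch is ever realized, an easy induction (using $|\ell_{k-i+1}|\le s/2^{i-1}\le s$) yields
\[
|\ell_{k-i}|\;\ge\;\frac{1-s/2^{i-1}}{2}\;\ge\;\frac{1-s}{2}\;>\;\tfrac12-s\;=\;L,
\]
so by the dichotomy $\ell_{k-i}$ spans $C$. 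If $1\le k-i\le k-1$ this contradicts minimality of $k$; if $k-i=0$ it contradicts $|\ell_0|=|\ell|=L$. Hence every backward step must use the ``small'' branch, yielding $|\ell_{k-i}|\le s/2^i$; taking $i=k$ gives $L=|\ell|\le s/2^k<s<L$. The resulting contradiction forces $\ell_k$ to span $C$.

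The main obstacle is establishing the length dichotomy cleanly and verifying that the ``large'' preimage branch really is forced into $C$ at each inductive step; once those are in hand the halving argument is elementary.
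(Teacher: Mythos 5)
Your proof is correct, and it takes a genuinely different route from what the paper does. The paper does not prove Thurston's theorem directly: it records the statement, proves the generalized Central Strip Lemma (Theorem~\ref{thm-centralstriplemma}) for $\sigma_d$ using the endpoint metric on chords --- the key mechanism being that a leaf can only shrink to fit in one component of $C\cap\ucirc$ if a prior iterate comes within $\eta/d$ of a critical chord lying outside $C$ --- and then recovers the $d=2$ case from Corollary~\ref{unicritCSL}, since when $d=2$ there are no critical chords outside $C$. Your argument captures the same obstruction, but entirely through the leaf-length function $\tau_2$ and a backward induction: once $|\ell_k|\le s$, each preimage length is either halved (small branch of $\tau_2^{-1}$) or jumps to roughly $\tfrac12$ (large branch); the large branch forces length $>L$, which by your dichotomy means that earlier iterate already spans $C$, while taking only small branches all the way back contradicts $|\ell_0|=L$. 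This is more elementary --- no metric on chords, no critical-chord bookkeeping --- but it is tied to $d=2$, where landing on the large branch of $\tau_2$ is exactly the ``near a diameter'' condition, and the diameter is the unique critical length. Two small points of care: when you say $k$ is ``least with $\ell_k\subset C$ but $\ell_k$ does not span,'' this should be read as $k$ being the least reentry index, with the non-spanning as an added hypothesis, otherwise producing a spanning $\ell_{k-i}$ with $1<k-i<k$ would not contradict minimality; and the dichotomy implicitly uses that $\ell'\in\lam$, which is supplied by sibling invariance.
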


 \begin{rem}
$|\ell|=\frac13$ is a special case: (2) holds with $j=1$ and $\ell=\overline{\frac13\frac23}$ maps to itself in reverse order.
 \end{rem}

In order to state and prove a Central Strip Lemma for $d>2$ we will
need to consider the fact that higher degree laminations can have
more than one critical leaf or gap.  To discuss the
distance between chords we use a metric on chords defined by
Childers \cite{Childers:2007} which we call the {\em endpoint metric}.

 \begin{figure}[h]
 \begin{center}
    \includegraphics[scale=0.75]{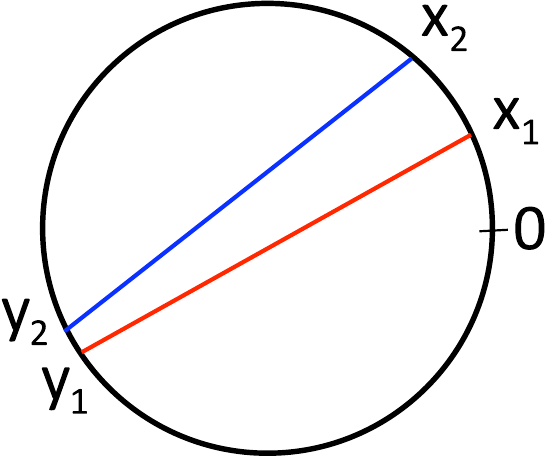}
     \caption{Endpoint distance between two disjoint chords: $\edis(\overline{x_1y_1},\overline{x_2y_2})=|(x_1,x_2)|+|(y_2,y_1)|$.}
       \label{lam-3}
    \end{center}
\end{figure}

\begin{defn}\label{endpoint-metric}
Suppose $\ell_1=\cl{x_1y_1}$ and $\ell_2=\cl{x_2y_2}$ are
chords in $\cl\disk$ meeting at most in one pair of endpoints.  We may suppose the circular order of the
endpoints is $x_1<x_2<y_2\le y_1$.  Define the {\em endpoint distance}
between $\ell_1$ and $\ell_2$ to be
$$\edis(\ell_1,\ell_2)=|(x_1,x_2)|+|(y_2,y_1)|.$$
If $\ell_1=\ell_2$ we define the distance to be $0$.
\end{defn}

See Figure~\ref{lam-3} for an example. We define the endpoint metric only between non-crossing chords.  The reader can check that on such chords it is a metric.
See the proof of Theorem~\ref{lem-centralstrip} for the definition of {\em long} and {\em short} arc length in the following theorem and its proof.

\begin{thm}[Central Strip Lemma] \label{thm-centralstriplemma}
 Let $C$ be a central strip of leaf $\ell$ and
its siblings with  long arc length $>\frac1{d+1}$.  Let $\eta$ be the short arc length.  Then the following hold.
\begin{enumerate}
\item The first image $\ell_1=\sigma_d(\ell)$ cannot reenter $C$.
 \item The second image $\ell_2=\sigma_d^2(\ell)$ cannot reenter $C$ with both endpoints in a single component of $C\cap\ucirc$.
\item If an iterate
$\ell_j=\sigma_d^j(\ell)$ of $\ell$ reenters $C$, for least $j>1$,
and has endpoints lying in one component of $C\cap\ucirc$,
then iterate $\ell_k$, for some $k\le j-1$, gets at least as close in the
endpoint metric as
 $\displaystyle\frac{\eta}{d^{j-k}}$ to a critical chord in $\cl\disk\sm C$.
 \end{enumerate}
\end{thm}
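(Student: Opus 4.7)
The plan is to dispatch (1) and (2) using length arithmetic as in the proof of Theorem~\ref{lem-centralstrip}, and to prove (3) by a backward-preimage recursion combined with a length contradiction.

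Label the sibling endpoints in counterclockwise order $x_1<y_1<\cdots<x_d<y_d$, so each short arc $(x_i,y_i)$ has common length $\eta$ and each long arc $(y_i,x_{i+1})$ has common length $\tfrac{1}{d}-\eta$. The hypothesis $\tfrac{1}{d}-\eta>\tfrac{1}{d+1}$ is equivalent to $\eta<\tfrac{1}{d(d+1)}$, so $|\ell_1|=d\eta<\tfrac{1}{d+1}$. Any two consecutive components of $C\cap\ucirc$ (in circular order) are separated by a gap containing at least one long arc, hence of length at least $\tfrac{1}{d}-\eta$.

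For (1), suppose both endpoints of $\ell_1$ lie in $C\cap\ucirc$. If they lie in a single component of length $\eta$, then $|\ell_1|=d\eta\le\eta$, impossible for $d\ge 2$. If they lie in different components, the shorter arc between them must cross a gap of length at least $\tfrac{1}{d}-\eta>d\eta$, again contradicting $|\ell_1|=d\eta$. For (2), since $|\ell_1|=d\eta<\tfrac{1}{d+1}$, Proposition~\ref{fixedlengths} yields $|\ell_2|>|\ell_1|\ge 2\eta>\eta$, so $\ell_2$ cannot fit both endpoints in a single component of $C\cap\ucirc$.

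For (3), let $A_0\subset C\cap\ucirc$ be the component of length $\eta$ containing both endpoints of $\ell_j$, and for $i\ge 1$ let $\mathcal{A}_i=\sigma_d^{-i}(A_0)$, a disjoint union of $d^i$ arcs of length $\eta/d^i$. Since $\sigma_d^i(\ell_{j-i})=\ell_j$, the two endpoints of $\ell_{j-i}$ always lie in $\mathcal{A}_i$. I would iterate the following dichotomy starting at $i=1$: either both endpoints of $\ell_{j-i}$ lie in a single arc of $\mathcal{A}_i$ (Case a, giving $|\ell_{j-i}|\le\eta/d^i$), or they lie in two distinct arcs (Case b). If Case a has held at every earlier step, then the two arcs appearing in Case b are both $\sigma_d$-preimages of a single arc of $\mathcal{A}_{i-1}$ and hence are spaced by a positive multiple of $\tfrac{1}{d}$; pairing corresponding points in those two arcs produces a critical chord at endpoint distance at most $\eta/d^i$ from $\ell_{j-i}$, giving the bound $\eta/d^{j-k}$ with $k=j-i$. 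If Case a holds throughout $i=1,\ldots,j-1$, then $|\ell_1|\le\eta/d^{j-1}$, which combined with $|\ell_1|=d\eta$ forces $d^j\le 1$, impossible; hence the recursion must terminate in Case b at some $i\le j-1$.

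The main obstacle will be verifying that the critical chord produced in Case b lies in $\cl\disk\sm C$ rather than inside the central strip. This requires a careful analysis of the positions of the preimage arcs $\mathcal{A}_i$ relative to the components of $C\cap\ucirc$ and the sibling leaves bounding $C$. I would leverage the structural fact that any chord lying entirely inside $C$ must be contained in a single C-region of degree $\ge 2$: since the constructed critical chord has length a positive multiple of $\tfrac{1}{d}$, which is relatively large compared to the short arcs of $C$ of length $\eta<\tfrac{1}{d(d+1)}$, it typically spans across a sibling leaf bounding $C$ and exits the central strip. The case-by-case verification that the constructed chord either has an endpoint lying outside $C\cap\ucirc$, or else crosses a sibling leaf of $C$, will be the primary technical work.
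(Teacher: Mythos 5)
Parts (1) and (2) are correct. For (1) your argument is essentially the paper's. For (2) you have found a genuine shortcut: the paper proves (2) last via a comparatively elaborate computation that invokes part (3), places $\ell_1$ ``under'' a critical chord $D$ of length $\frac1d$, and derives $\frac{1}{d^2+1}\le\eta<\frac{1}{d(d+1)}$ as a contradiction. Your route is shorter and independent of (3): since $|\ell_1|=d\eta<\frac{1}{d+1}$, the repelling behavior of $\tau_d$ below the fixed point $\frac{1}{d+1}$ (Proposition~\ref{fixedlengths} or Lemma~\ref{lem-length}) gives $|\ell_2|>|\ell_1|=d\eta>\eta$, which already forbids both endpoints of $\ell_2$ from sitting in one short arc. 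That is a real simplification of the paper's argument.

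For (3), your backward-preimage recursion is a genuinely different decomposition from the paper's forward argument (the paper assumes $\ell_{j-1}$ is $\edis$-close to a critical chord and verifies that this forces $|\ell_j|\le\eta$, then waves at the converse). Your recursion is cleanly arranged: the Case-a arithmetic $|\ell_1|\le\eta/d^{j-1}$ contradicts $|\ell_1|=d\eta$, so Case b occurs, and if Case a held at all earlier stages the two arcs in Case b are preimages of a single arc and hence spaced by a multiple of $\frac1d$, yielding a critical chord $D=\overline{u,\,u+k/d}$ with $\edis(\ell_{j-i},D)\le\eta/d^i$. The gap you flag --- that $D$ lies in $\cl\disk\sm C$ --- is the genuine content of (3), and your proposal does not close it. Two warnings. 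First, your $D$ shares the endpoint $u$ with $\ell_{j-i}$; since $\ell_{j-i}$ may have one endpoint inside a short arc of $C$, you must at minimum choose $u$ to be the other endpoint, and you must still rule out $D$ crossing back through $C$. Second, your stated heuristic --- that $D$ has length a multiple of $\frac1d$ and will therefore ``span across a sibling leaf and exit'' $C$ --- points in the wrong direction: in the unicritical case (one $C$-region of degree $d$, Corollary~\ref{unicritCSL}) \emph{every} critical chord meets $C$, so ``exits'' is not what you need; you need $D$ to be disjoint from $C$, and that is precisely what makes (3)'s hypothesis vacuous in that case. This is where the structure of the sibling portrait (Theorem~\ref{lem-centralstrip}) has to be used in a nontrivial way, and neither your proposal nor, candidly, the paper's own proof carries that step out; the paper simply asserts the existence of such a $D$ ``not contained in $C$''.
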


 \begin{figure}[h]
 \begin{center}
    \includegraphics[scale=0.8]{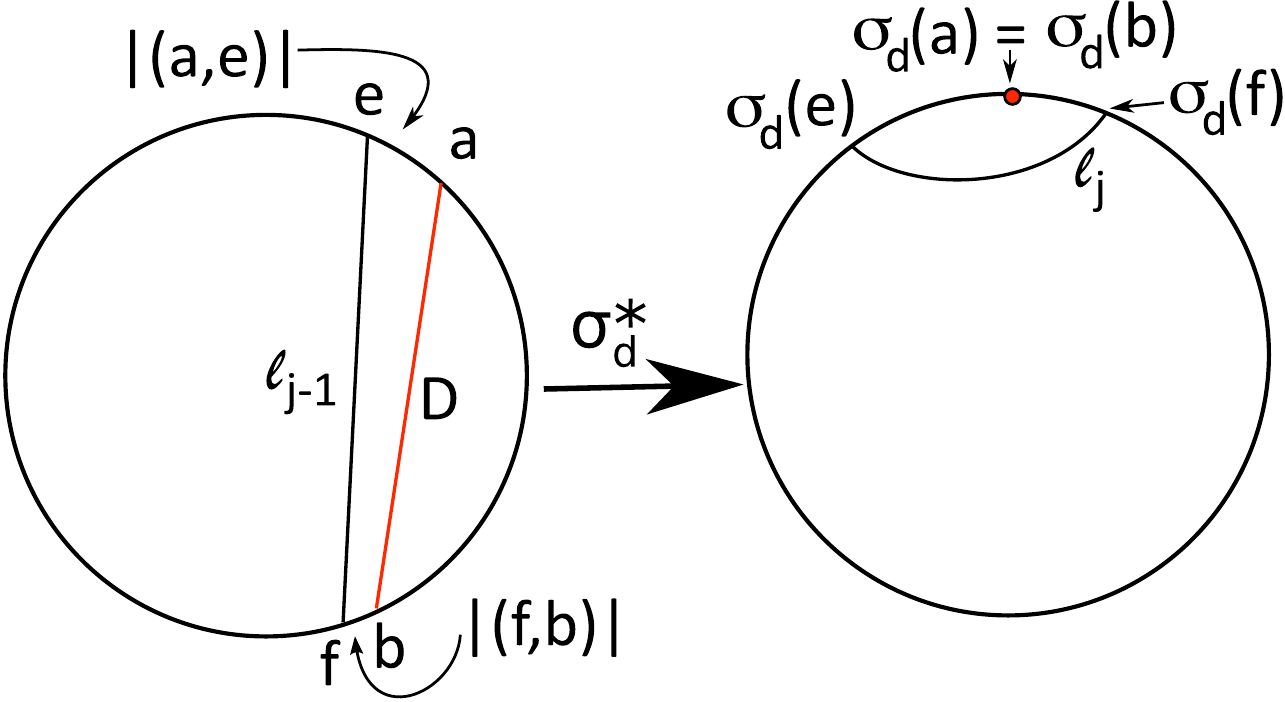}
     \caption{An iterate of $\ell$ approaches a critical chord $D$.}
       \label{CloseToCrit3}
    \end{center}
\end{figure}

\begin{proof}
Suppose $C$ is a central strip with long arc length $>\frac1{d+1}$, so there is leaf $\ell=\cl{xy}$ in its
boundary such that  $|\ell|>\frac1{d+1}$ and no leaf is a multiple of $\frac1{2d}$ long. Let the  short arc length be $\eta$. Then $$\eta<\frac1d-\frac1{d+1}=\frac1{d(d+1)}.$$
 Let
$\{\ell=\ell_0,\ell_1,\ell_2,\dots\}$ be the orbit of $\ell$.  Note
that the length of $\ell_1$ is
$$|\ell_1|=|\sigma_d(\ell_0)|=|(\sigma_d(x),\sigma_d(y)|=d\eta<d(\frac1{d(d+1)})=\frac1{d+1}.$$
Since the length of a component of $C\cap\ucirc$ is $\eta$, the
endpoints of $\ell_1$ cannot lie in one component of $C\cap\ucirc$.  On the other hand, since the length of $\ell_1$ is less than $\frac1{d+1}$, it cannot connect two components, because the long arc length is $>\frac1{d+1}$.  This establishes conclusion (1) of the theorem. 

By Lemma~\ref{lem-length}, the length of $\ell_i$, for $i>1$, will
grow until it is at least $\frac1{d+1}$ long, so will continue not
to fit into one component of $C\cap\ucirc$.  It may be that the
orbit of $\ell$ never reenters $C$, or it may be that it reenters connecting two components of $C\cap\ucirc$.

Suppose now that $\ell_j=\cl{x_jy_j}$ is the first iterate of $\ell$
that reenters $C$, and suppose that the endpoints of $\ell_j$ lie in
one component of $C\cap\ucirc$.  Then $|\ell_j|\le\eta$. The only way
$\ell_j$ can get to  $\eta$ or less in length is by approaching a
critical chord $D=\cl{ab}$ not contained in $C$ sufficiently close
in the endpoint metric. (See Definition~\ref{endpoint-metric} and Figure~\ref{CloseToCrit3}.  In Figure~\ref{CloseToCrit3} the endpoints of $\ell_{j-1}$ are denoted $e$ and $f$.)
Suppose
$$\edis(\ell_{j-1},D)=|(x_{j-1},a)|+|(b,y_{j-1})|\le\frac{\eta}{d}.$$
Then, since $\sigma_d(a)=\sigma_d(b)$, we have
$|\ell_j|=|(x_j,y_j)|=$
$$=|(\sigma_d(x_{j-1}),\sigma_d(a))|+|(\sigma_d(b),\sigma_d(y_{j-1})|$$$$=|(\sigma_d(x_{j-1}),\sigma_d(y_{j-1})|\le d(\frac{\eta}{d})=\eta.$$
  If the last close approach to a
critical chord $D$
 before entering $C$ were at an iterate $k<j-1$, it would have to be even closer (by additional
factors of $\frac 1d$). This establishes part (3) of the theorem.

To see that the first iterate $\ell_j$ of $\ell$ that might enter $C$ with both endpoints in one component of  $C\cap\ucirc$ must have $j>2$, suppose by way of contradiction that  $\ell_2$ has both endpoints in one component.   By the proof of part (3), we have: $$d\eta=|\overline{xy}|<\frac1{d+1}<\frac1d.$$  Now $\ell_1$ must be sufficiently close to a critical chord $D=\overline{ab}$ outside central strip $C$ to shrink $\ell_2$, so that $$ \edis(\overline{xy},D)=|(a,x)|+|(y,b)|\le\frac\eta{d}.$$ But since $|\overline{xy}|<\frac1d$, $\ell_1$ must be under $D$.  
Hence,  $$  \edis(\overline{xy},D)+\overline{xy}=\frac1d.$$  
By the above, we can compute that $$\edis(\overline{xy},D)=\frac{1-d^2\eta}{d}.$$  Our supposition that $\ell_2$ has both endpoints in one component implies that $$\frac{1-d^2\eta}{d}\le\frac{\eta}d.$$  From this and our definition of $\eta$ it follows that $$\frac1{d^2+1}\le\eta<\frac1{d(d+1)},$$ a contradiction for all $d\ge 2$.
This completes the proof of part (2).
\end{proof}


\begin{cor}[Unicritical Central Strip Lemma]\label{unicritCSL}
 Let $C$ be a central strip of degree $d$ of leaf $\ell$ and its siblings for the map $\sigma_d$.  Then no image of $\ell$ can re-enter $C$ with both endpoints in a single component of $C\cap\ucirc$.
\end{cor}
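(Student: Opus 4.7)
The plan is to deduce the corollary directly from Theorem~\ref{thm-centralstriplemma}(3), by showing that the hypothesis $\deg(C)=d$ leaves no critical chord available in $\cl\disk\sm C$ for the Central Strip Lemma to invoke.

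First I would extract the rigid structure of the sibling portrait forced by $\deg(C)=d$. By Proposition~\ref{degreeformula}, $\sum_T(\deg(T)-1)=d-1$, and a single C-region of degree $d$ already exhausts this sum. Consequently every other complementary region has degree exactly $1$, and by Theorem~\ref{lem-centralstrip} each such terminal region is an R-region. Thus $\cl\disk\sm C$ decomposes into finitely many degree-$1$ R-regions, each the interior of a component of $\cl\disk\sm C$.

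Next I would invoke the preceding proposition bounding the number of disjoint critical chords in a component $T$ of a sibling portrait by $\deg(T)-1$. A degree-$1$ R-region therefore contains no critical chord at all. Since any chord contained entirely in $\cl\disk\sm C$ is, by connectedness, confined to a single such R-region, it follows that no critical chord lies in $\cl\disk\sm C$.

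Finally, suppose for contradiction that some iterate $\ell_j=\sigma_d^j(\ell)$ reenters $C$ with both endpoints in one component of $C\cap\ucirc$; take $j$ minimal. Theorem~\ref{thm-centralstriplemma}(1) forbids $j=1$, so $j\ge 2$, and part~(3) then furnishes an iterate $\ell_k$ with $k\le j-1$ lying within endpoint distance $\eta/d^{j-k}$ of a critical chord in $\cl\disk\sm C$. This contradicts the conclusion of the previous paragraph. The main obstacle is the structural translation of $\deg(C)=d$ into the unavailability of critical chords outside $C$; once this step is in place, the corollary is a direct invocation of the Central Strip Lemma.
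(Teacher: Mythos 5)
Your approach matches the paper's: establish that no critical chord lies outside $C$, then let Theorem~\ref{thm-centralstriplemma}(3) supply the contradiction. In fact you fill in that first step more carefully than the paper does. The paper simply asserts that ``there is no critical chord outside $C$,'' whereas you derive it from Proposition~\ref{degreeformula} (a single degree-$d$ C-region exhausts $\sum(\deg T -1)=d-1$, so every other region has degree $1$) together with the bound $\deg(T)-1$ on disjoint critical chords in a component (so degree-$1$ regions contain none). This is a clean and worthwhile amplification.

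There is, however, a genuine gap. Theorem~\ref{thm-centralstriplemma} carries the standing hypothesis that the long arc length exceeds $\frac{1}{d+1}$, and you invoke both parts (1) and (3) without verifying it. In the degree-$d$ situation each sibling $\ell_i$ bounds a degree-$1$ R-region whose circular boundary is a single long arc, so $|\ell_i|$ \emph{equals} the long arc length $\frac1d-\eta$; but the definition of a central strip guarantees only $|\ell_i|>\frac{1}{2d}$, and for every $d\ge 2$ the interval $\left(\frac{1}{2d},\frac{1}{d+1}\right]$ is nonempty, so the hypothesis of Theorem~\ref{thm-centralstriplemma} is not automatic. The paper's proof disposes of exactly this: it notes that if $|\ell|<\frac{1}{d+1}$ then, by Lemma~\ref{lem-length}, the length grows under iteration, so one may pass to a later iterate with length $\ge\frac{1}{d+1}$ without loss of generality, and the fixed-length case $|\ell|=\frac{1}{d+1}$ is dispatched as trivial. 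Your proposal needs this reduction (or some substitute for it) before the appeal to Theorem~\ref{thm-centralstriplemma} is legitimate; as written, the final step does not apply when $\frac{1}{2d}<|\ell|\le\frac{1}{d+1}$.
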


\begin{proof} 
Since $C$ is a central strip of degree $d$ for $\sigma_d$, there must be a full sibling collection ($d$ leaves) in its boundary.  Since it is a central strip, by definition $|\ell|>\frac1{2d}$ as is the length of all its siblings.  But to have room for an all-critical $d$-gon inside the strip, $|\ell|<\frac1d$.  If $|\ell|<\frac1{d+1}$, it will grow in length under future iterates. So we lose no generality in assuming $|\ell|\ge\frac1{d+1}$.  The case where $|\ell|$ is fixed at $\frac1{d+1}$ is trivial. Since there is no critical chord outside $C$, it follows from the Central Strip Lemma~\ref{thm-centralstriplemma}, part (3), that no future image of $\ell$ can re-enter $C$ with both endpoints in a single component of $C\cap\ucirc$.
\end{proof}

\section{Counting Sibling Portraits and Central Strips}

In the proof of Theorem~\ref{lem-centralstrip}, we showed that each sibling portrait corresponds to a bicolored tree.  Now we show that the correspondence is one-to-one up to rotational symmetry.

 \begin{figure}[h]
 \begin{center}
   \includegraphics[scale=0.25]{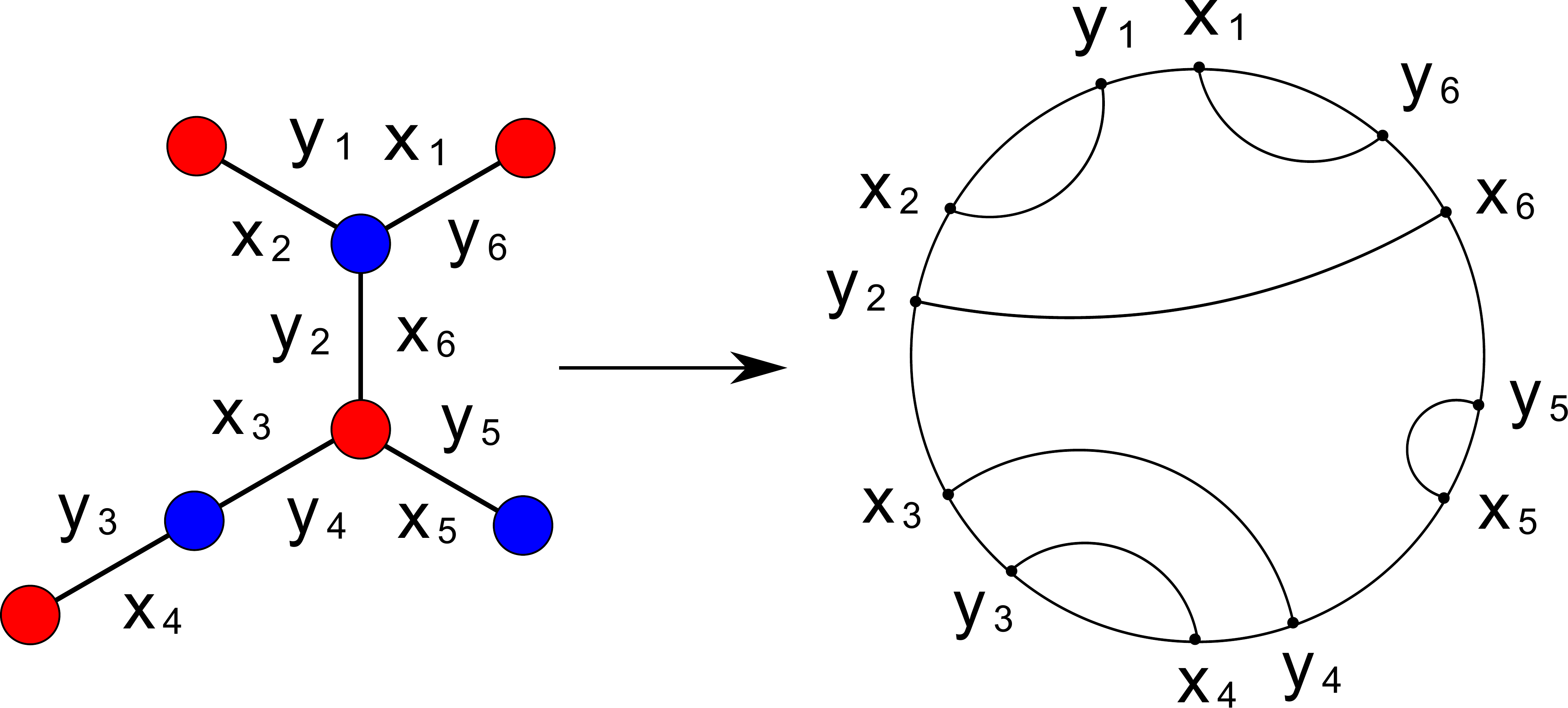}
     \caption{Mapping a bicolored tree to a sibling portrait.}
       \label{treemap}
    \end{center}
\end{figure}

\begin{thm} \label{centraltree}
If $\ell$ is a non-degenerate leaf, not a diameter, then there are $N(d)$ different full sibling collections which map onto $\ell$, distinct up to rotational symmetry, where
$$ N(d)=\frac1d\left(\frac1{d+1}\comb{2d}{d}+\sum_{n|d,n<d}
\phi\left( \frac{d}{n}\right) \comb{2n}{n}\right)$$
and $\phi(x)$ is Euler's {\em totient function}, the number of positive integers less than, and relatively prime to, $x$.
\end{thm}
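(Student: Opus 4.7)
The plan is to set up a bijection between full sibling collections mapping to $\ell$ and non-crossing perfect matchings of the $2d$ interleaved preimage points $\{x_1,\ldots,x_d\}\cup\{y_1,\ldots,y_d\}$, and then to apply Burnside's lemma to count orbits under the natural rotational action. First, by Remark~\ref{12d} the preimages alternate as $x_1<y_1<\cdots<x_d<y_d$; since each sibling maps to $\ell$ it joins some $x_i$ to some $y_j$, and disjointness of siblings is exactly the non-crossing condition. A short parity argument (a hypothetical chord joining two $x$-points would leave an odd number of remaining interleaved points on each side, precluding a perfect matching) shows every non-crossing perfect matching of the $2d$ points automatically respects the bipartition. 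Hence the total count of sibling collections equals the Catalan number $C_d=\frac{1}{d+1}\binom{2d}{d}$.

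Next, rotation of $\cl\disk$ by $\frac{1}{d}$ cyclically permutes the $x_i$ and the $y_j$, giving a $\mathbb{Z}/d$-action whose orbits are exactly what the theorem counts. Burnside's lemma gives
\[
N(d)=\frac{1}{d}\sum_{n\mid d}\phi(n)F(n),
\]
where $F(n)$ is the number of sibling collections fixed by any rotation of order $n$. The $n=1$ term contributes $C_d$, the first summand of the claimed formula. Substituting $n\leftrightarrow d/n$ in the remaining sum produces the second summand, provided
\[
F(n)=\binom{2m}{m}\qquad\text{for each }n\mid d\text{ with }n>1,\text{ where }m=d/n.
\]
This identity is the crux of the proof.

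To establish it, I would first carry out an orbit analysis. A rotation $r$ of order $n$ acts on the cyclic sequence of $2d$ points as shift by $2m$; a chord is fixed as a set by some nontrivial power of $r$ only when its endpoints are antipodal on the cycle, which forces $n\le 2$, and any two antipodal chords cross at the center, so this exceptional case does not change the final count. All remaining chord orbits have size exactly $n$. The count $F(n)=\binom{2m}{m}$ is then most cleanly derived via the cyclic sieving phenomenon of Reiner--Stanton--White: the triple $(\mathcal{M},\mathbb{Z}/(2d),C_d(q))$ with $C_d(q)=\frac{1}{[d+1]_q}\binom{2d}{d}_q$ exhibits cyclic sieving, and our rotation corresponds to an element of order $n$ in $\mathbb{Z}/(2d)$. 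A short computation using the $q$-Lucas theorem gives $\binom{2d}{d}_{\zeta_n}=\binom{2m}{m}$ and $[d+1]_{\zeta_n}=1$ at a primitive $n$-th root of unity $\zeta_n$ (using $\zeta_n^d=1$), so $F(n)=C_d(\zeta_n)=\binom{2m}{m}$ as required.

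The main obstacle is proving $F(n)=\binom{2m}{m}$. Cyclic sieving provides a one-line route but is a nontrivial external theorem; a self-contained combinatorial proof would require an explicit bijection between $r$-invariant non-crossing matchings of $2d$ points and a natural $\binom{2m}{m}$-sized set such as the size-$m$ subsets of a $2m$-element set. Naive ``forward/backward'' labelings of orbit representatives in a fundamental sector turn out not to produce balanced sequences in small cases, so any direct bijection will have to track the phase shifts of exit-chord orbits and the nested configuration of chords in the sector with some care.
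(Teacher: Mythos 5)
Your route is correct but genuinely different from the paper's. The paper does not count directly: it establishes a bijection between sibling portraits and \emph{bicolored plane trees} with $d$ edges (via the dual graph $T_S$ built in Theorem~\ref{lem-centralstrip}), and then cites the known enumeration of such trees up to rotation from B\'ona--Bousquet--Labelle--Leroux \cite[Th.~21]{bbll:09}, which is exactly $N(d)$. You instead biject full sibling collections onto non-crossing perfect matchings of the $2d$ interleaved preimage points (your parity argument correctly shows any non-crossing perfect matching automatically respects the $x$/$y$ bipartition, so nothing is overcounted), apply Burnside over the $\mathbb{Z}/d$ rotation action, and evaluate the fixed-point counts $F(n)=\binom{2m}{m}$ via cyclic sieving and $q$-Lucas. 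Both arguments outsource the hard combinatorics: the paper to a tree-enumeration theorem, you to the CSP for non-crossing matchings. What the paper's version buys is coherence with the rest of the development (the dual-graph/bicolored-tree machinery is already built and reused), and it avoids any root-of-unity computation. What your version buys is a shorter conceptual path from the geometric setup (alternating preimage points on the circle) to the formula, and Burnside makes the $\frac{1}{d}(\cdots)$ and the $\phi(d/n)$ weights appear for a transparent reason rather than as an opaque output of a cited theorem. One small caution: you should verify that the relevant rotation has order exactly $d$ (no accidental extra symmetry), which holds here precisely because $\ell$ is not a diameter and the siblings are not multiples of $\frac{1}{2d}$ long, so the $x_i$ and $y_i$ are not evenly interleaved and the geometric stabilizer is $\mathbb{Z}/d$, matching the combinatorial shift by two positions you use.
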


The proof refers to Figure~\ref{treemap}.

\begin{proof}
The goal is to show that there are just as many different full sibling families (equivalently sibling portraits) which map to the same leaf under $\sigma_d$ as there are different bicolored trees with $d$ edges, up to rotation in the plane. The number of bicolored trees with $d$ edges is known to be $N(d)$ \cite[Th.~21]{bbll:09}. Thus we will use this correspondence to show there are $N(d)$ different sibling portraits mapping to the same leaf. Refer to Fig. ~\ref{treemap} during this proof.

The proof of Theorem ~\ref{lem-centralstrip} illustrates how to map a sibling portrait to a bicolored tree. It is easy to check that if two sibling portraits map to the same bicolored tree (up to rotation) then those two sibling portraits are the same (up to rotation in the plane). Therefore, since for every sibling portrait we can find a unique bicolored tree, there must be at least as many bicolored trees as sibling portraits.

Now assume we are given a bicolored tree with $d$ edges like the one in Fig. ~\ref{treemap}. Since each edge corresponds to a leaf in the full sibling family and each leaf has two endpoints, we may correlate each side of an edge with an endpoint of the leaf. Label the sides of the edges in the tree in a counterclockwise order $x_1, y_1, x_2, y_2, \dots , x_d, y_d$. Since we consider sibling portraits to be the same if one is a rotation of the other, then it does not matter which edge you choose to label $x_1$ with. However, since we generally assume $(x_i, y_i)$ to be a short arc, the vertex of the tree between $x_i$ and $y_i$ must correspond to a $C$-region. Then in the unit circle, connect a leaf between $x_i$ and $y_j$ if they are two sides of the same edge in the tree. This will construct a full sibling family. Similarly, it is easy to check that if two bicolored trees map to the same sibling portrait then they are the same bicolored tree. Therefore for every bicolored tree there is a unique sibling portrait. It then follows that there are exactly as many sibling portraits as bicolored trees.
\end{proof}

The corollary below follows immediately since the only time a sibling portrait does not have a central strip is when all the boundary leaves are short.

\begin{cor}
If $\ell$ is a non-degenerate leaf, not a diameter, then the number of different central strips, distinct up to rotational symmetry, whose boundary leaves map onto $\ell$ is $N(d)-1$.
\end{cor}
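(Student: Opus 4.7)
The plan is to subtract from the total count $N(d)$ of Theorem~\ref{centraltree} the unique sibling portrait that carries no central strip. By Definition~\ref{def-centralstrip} the central strip of a portrait is the closure of the union of C-regions of degree at least $2$, so a portrait fails to possess a central strip exactly when every C-region has degree $1$. The proof of Theorem~\ref{lem-centralstrip} records that this happens precisely when every sibling leaf is short (length less than $\frac{1}{2d}$): conclusion (1) of that theorem supplies one direction, and the closing paragraph of its proof supplies the other.

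Next I would exhibit the unique ``all short'' portrait. Labeling endpoints in counterclockwise order $x_1<y_1<x_2<y_2<\cdots<x_d<y_d$ with each arc $(x_i,y_i)$ short, the only way to produce $d$ pairwise disjoint sibling leaves each of length less than $\frac{1}{2d}$ is to take the $d$ chords $\cl{x_iy_i}$; any other pairing would force some chord to subtend a long arc and hence be long. This configuration yields exactly one sibling portrait, corresponding via the bijection of Theorem~\ref{centraltree} to the star bicolored tree with a single central R-vertex of degree $d$ surrounded by $d$ peripheral C-vertices of degree $1$.

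Subtracting this lone exceptional portrait from the total $N(d)$ supplied by Theorem~\ref{centraltree} leaves $N(d)-1$ portraits with nonempty central strip. Distinct portraits yield distinct central strips because Theorem~\ref{centraltree} gives a one-to-one correspondence between sibling portraits and bicolored trees (up to rotational symmetry), and the central strip is a union of specific C-regions determined by the portrait. The only mildly delicate point is uniqueness of the all-short configuration, which reduces to the observation that a leaf shorter than $\frac{1}{2d}$ with an endpoint at $x_i$ (respectively $y_i$) must terminate at $y_i$ (respectively $x_i$); beyond that, the result is an immediate bookkeeping consequence of the structure theorem and the enumeration in Theorem~\ref{centraltree}.
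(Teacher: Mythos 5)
Your argument is correct and follows essentially the same path as the paper: the paper dispatches the corollary with the single observation that the unique sibling portrait lacking a central strip is the one in which every sibling leaf is short, so that exactly one portrait is excluded from the count $N(d)$. You spell out the uniqueness of the ``all short'' configuration and its identification with the star bicolored tree, which is implicit in the paper's one-line justification, but there is no difference in strategy.
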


\section{Applications to Identity Return Polygons for $\sigma_d$, $d\ge 3$}\label{app}

It will be convenient to be able to refer to points on the circle by their {\em $d$-nary} expansions.  The pre-images under $\sigma_d$ of $0$ partition the circle into $d$ half-open intervals $[\frac{k-1}{d},\frac{k}{d})$, for $1\le k\le d$, labeled successively from $0$ with symbol $\{0,1,\dots,d-1\}$.  A point $x$ of the circle is then labeled with its {\em itinerary}, an infinite sequence $t_0t_1t_2\dots t_n\dots$ of symbols selected from $\{0,1,\dots,d-1\}$, based upon which labeled interval $\sigma_d^n(x)$ lies in.  If a sequence of symbols repeats infinitely a finite sequence of length $n$, we write $\overline{t_0t_1t_2\dots t_n}$ to indicate the infinite sequence.  For example, the point $\frac13$ is of period 2 under $\sigma_2$.  Thus, its $\sigma_2$-itinerary, or binary expansion,  is $\overline{01}$.  However, the $\sigma_3$-itinerary, or ternary expansion, of $\frac13$ is $1\overline{0}$ since $\sigma_3(\frac13)=0$.  Using the $d$-nary expansion of a point $x$, the map $\sigma_d$ is the {\em forgetful shift} since the map sends $$t_0t_1t_2\dots t_n\dots\mapsto t_1t_2t_3\dots t_n\dots\ .$$

\begin{defn}\label{ident-return-poly} \label{id-return}
A (leaf or) polygon $P$ in a $d$-invariant lamination is called an {\em identity return polygon} iff $P$ is
periodic under iteration of $\sigma_d$, the polygons in the orbit of $P$ are pairwise disjoint, and on its first return, each vertex (and thus each side) of $P$ is carried to itself by the identity.
\end{defn}

\begin{figure}[h]
 \begin{center}
   \includegraphics[scale=0.3]{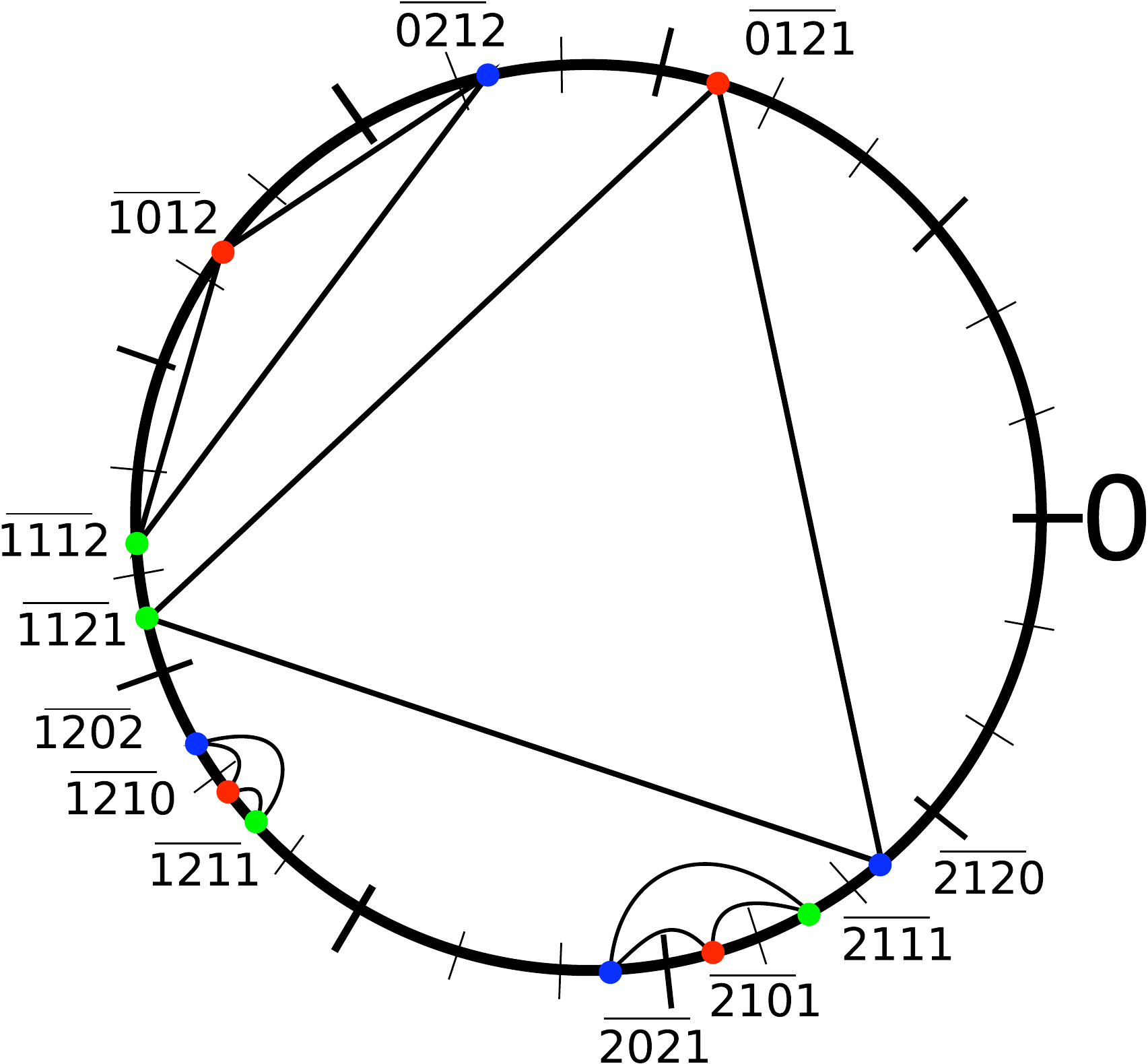}\hspace{0.1in}
    \includegraphics[scale=0.25]{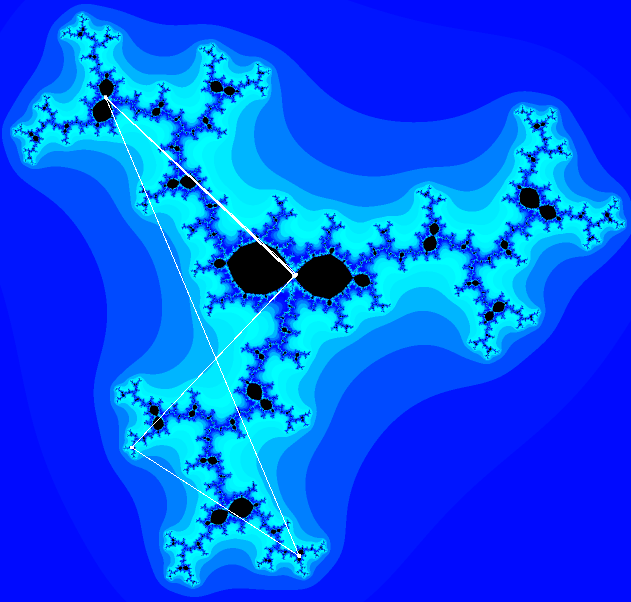}
     \caption{A period 4 identity return triangle for $\sigma_3$ and a corresponding polynomial Julia set with the orbit of the branch point indicated (after Kiwi).}
       \label{Kiwi}
    \end{center}
\end{figure}

Note that we require the identity return polygon $P$ be in a sibling $d$-invariant lamination. This is because without this restriction, one can produce examples of identity return polygons that satisfy the other conditions of Definition~\ref{id-return}, but could not correspond to a Julia set.  (See Figure~\ref{irt2} in connection with the proof of Lemma~\ref{nofixed} for an example.) A periodic polygon in a sibling $d$-invariant lamination corresponds to a periodic branch point in a Julia set.  Locally, the circular order of the branches is preserved by the polynomial.  It is a non-obvious consequence of the definitions that in a sibling $d$-invariant lamination, a polygon maps under $\sigma_d$ preserving the circular order of its vertices \cite[Theorem 3.2]{Mimbs:2013}.    An identity return polygon as we have defined it corresponds to a periodic branch point in a connected Julia set of a polynomial that returns to itself with no rotation around the branch point.  See Figure~\ref{Kiwi} for an example of Kiwi \cite{Kiwi:2002}.

 \begin{figure}[h]
 \begin{center}
   \includegraphics[scale=0.7]{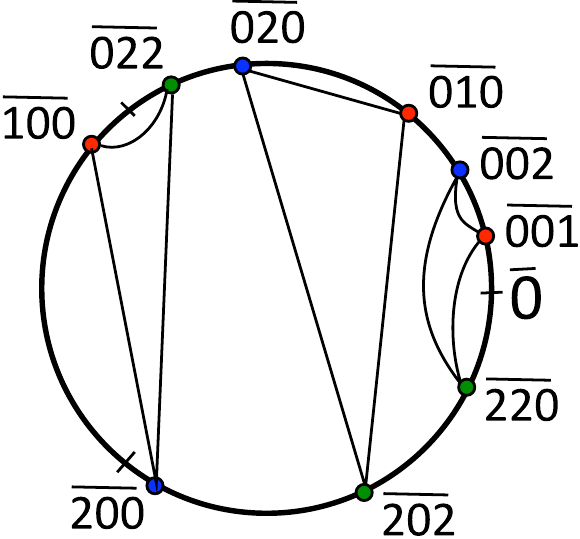}\hspace{0.1in} 
   \includegraphics[scale=0.6]{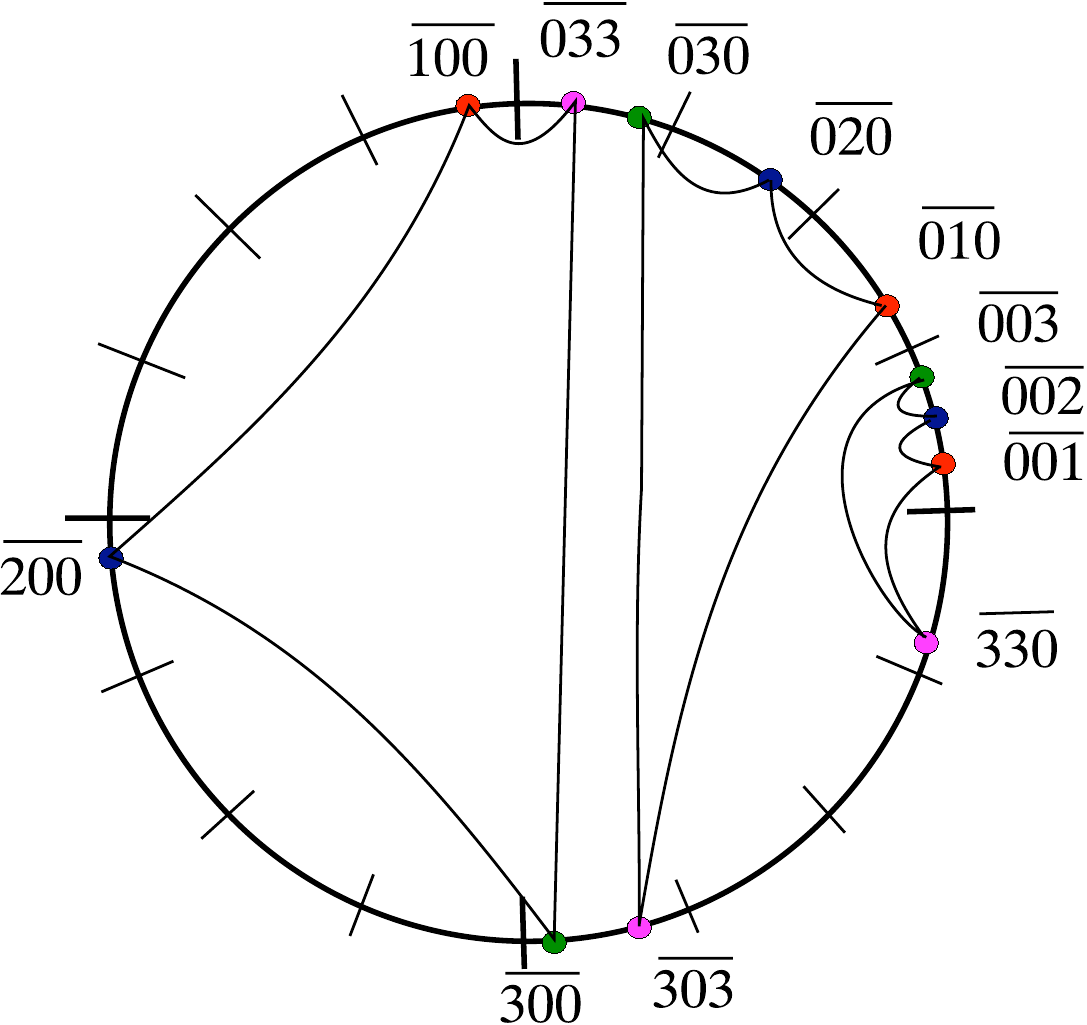}
     \caption{Examples of identity return $d$-gons for $d=3$ and $d=4$ under $\sigma_3$ and $\sigma_4$, respectively; the vertices are labeled by their (periodic) expansions.}
       \label{irt-example}
    \end{center}
\end{figure}

\begin{ex}\label{exp-irp-period3}
There are identity return $d$-gons of period 3 for all $d\ge 3$.  The vertices of an example of such a $d$ -gon for $\sigma_d$ in $d$-nary expansion are $$\{\overline{001}, \overline{002}, \overline{003}, \dots, \overline{00(d-1)}, \overline{(d-1)(d-1)0}\}.$$
\end{ex}

In Figure~\ref{irt-example} we illustrate  an example of an  identity return triangle of period 3 for $\sigma_3$ and  identity return quadrilateral of period 3 for $\sigma_4$.
As shown by Thurston, invariant quadratic laminations cannot have an identity return triangle \cite{Thurston:2009}, though they can have multiple identity return leaves.      Generalizing Thurston's result for quadratic laminations, Kiwi \cite[Theorem 3.1]{Kiwi:2002} proved 

\begin{thm}[Kiwi]\label{kiwi}  A $d$-invariant lamination cannot have an identity return $k$-gon for any $k>d$. \end{thm}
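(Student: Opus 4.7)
The plan is to argue by contradiction using the Central Strip Lemma together with a pigeonhole argument enabled by $k>d$. Suppose $P$ is an identity return $k$-gon of period $n$ with $k>d$. First I would reduce to a ``longest side'' setup: among all sides of all iterates $\sigma_d^i(P)$ for $0\le i<n$, select a side $\ell$ of maximum length $L$, and by reindexing assume $\ell$ is a side of $P$ itself. Since $\sigma_d(\ell)$ is another side of the orbit with $|\sigma_d(\ell)|\le L$, Proposition~\ref{fixedlengths} forces $L\ge\frac{1}{d+1}$. The rigid equality case $L=\frac{1}{d+1}$ can be treated separately (the set of fixed-length leaves of this length is very restricted), so I would assume $L>\frac{1}{d+1}$. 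Theorem~\ref{lem-centralstrip} then provides a nonempty central strip $C$ of $\ell$ with short arc length $\eta<\frac{1}{d(d+1)}$ and long arc length exceeding $\frac{1}{d+1}$.

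Next I would run the pigeonhole step. Since sides of $P$ are leaves of $\mathcal L$, disjoint from the siblings of $\ell$, the polygon $P$ lies entirely inside the closure of a single complementary region $T$ of the sibling portrait adjacent to $\ell$. By Theorem~\ref{lem-centralstrip}, $T$ is either a C-region of $C$ of degree $\deg(T)\in[2,d]$ or an R-region of degree $\deg(T)\in[1,d]$. In either case $T\cap\ucirc$ is a disjoint union of $\deg(T)\le d$ arcs, and the $k>d$ vertices of $P$ (in counterclockwise order) are distributed among these arcs. Pigeonhole forces two consecutive vertices $v_j,v_{j+1}$ of $P$ to lie in a common arc of $T\cap\ucirc$, producing a side $s=\overline{v_jv_{j+1}}$ of $P$ whose endpoints lie in a single component of $T\cap\ucirc$. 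If $T$ is a C-region then $s$ lies inside $C$ with both endpoints in one short arc and $|s|<\eta$; if $T$ is an R-region then $s$ lies in that R-region with endpoints on one long arc.

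Finally I would derive the contradiction. The identity return hypothesis gives $\sigma_d^n(s)=s$, so the orbit of $s$ is periodic and, by pairwise disjointness of iterates of $P$, completely disjoint from the orbit of $\ell$. In the unicritical subcase (where $C$ has degree exactly $d$ and no critical chord lies outside $C$), $T=C$ and the Unicritical Central Strip Lemma (Corollary~\ref{unicritCSL}), together with $\sigma_d^n(s)=s$, yields a direct contradiction: the required reentry of the orbit into $C$ with endpoints in one short arc is ruled out, yet the polygon's identity return forces exactly this configuration. In the general multicritical case I would apply Theorem~\ref{thm-centralstriplemma}(3) iteratively: each reentry of the orbit of $\ell$ (and, by a parallel argument applied to the central strip of the longest leaf in $s$'s orbit, each reentry of $s$'s orbit) into the relevant central strip with endpoints in one component demands a distinct close approach (within $\eta/d^{j-k}$) to a critical chord outside that strip. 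The proposition bounding disjoint critical chords per sibling portrait region caps the global supply at $d-1$, while the hypothesis $k>d$ forces strictly more such close-approach demands than can be supplied. The main obstacle is executing this critical-chord counting rigorously in the multicritical case: one must track, iterate by iterate across the period $n$, how the orbits of $\ell$ and $s$ independently consume the finite supply of $d-1$ critical chords, and show that $k-d\ge 1$ extra vertices generate strictly more demand than supply. The unicritical subcase is the clean conceptual prototype and likely the base case for the full argument.
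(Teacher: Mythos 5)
First, note that the paper does not actually prove Theorem~\ref{kiwi}: it is cited directly from Kiwi \cite[Theorem 3.1]{Kiwi:2002}, and the authors state explicitly that a proof via the Central Strip Lemma will appear ``in a subsequent paper.'' Within this paper only the special case $d=3$, $k=4$ is proved (Theorem~\ref{noIRQinsigma3}), and that proof proceeds by a delicate case analysis built on $\sigma_3$-specific inputs (Propositions~\ref{irt3-closetocrit}, \ref{thinpoly}, \ref{irt3-longest}), not by a pigeonhole on vertices versus arcs of a single sibling portrait. So there is no ``same'' argument to compare against; your proposal must be judged on its own.

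On its own, the proposal has a genuine gap at the very first reduction. You take $\ell$ to be the longest side in the whole orbit and invoke Theorem~\ref{lem-centralstrip} to get a central strip with short arc length $\eta<\tfrac{1}{d(d+1)}$. But $\eta=|\sigma_d(\ell)|/d$, so that bound is equivalent to $|\sigma_d(\ell)|<\tfrac{1}{d+1}$, and being the longest side only gives $|\sigma_d(\ell)|\le|\ell|$, which is far weaker. Already for $d=3$: if the longest side has $|\ell|\in(\tfrac{5}{12},\tfrac12)$, then $|\sigma_3(\ell)|=3|\ell|-1>\tfrac14=\tfrac1{d+1}$, and the hypothesis of the Central Strip Lemma fails for that strip. (This is precisely why the paper's Proposition~\ref{irt3-closetocrit} has to iterate forward until the length lands in $(\tfrac14,\tfrac5{12})$, rather than simply taking the longest side.) For general $d$ the longest side can sit in any interval $(\tfrac{j}{d+1},\tfrac{j}{d-1})$ with $j\ge1$, and for $j\ge2$ the image length need not drop below $\tfrac1{d+1}$. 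A second gap is in the pigeonhole step: the complementary region $T$ that contains the polygon can be an $R$-region, in which case the side $s$ you produce has both endpoints in one \emph{long} arc of $T$; Theorem~\ref{thm-centralstriplemma} says nothing about such a leaf, and no contradiction follows — you silently treat only the $C$-region case. Finally, the ``critical-chord counting'' closing step is not an outline of a known argument but, as you yourself flag, an open problem: nothing in the paper gives a mechanism by which each reentry into a central strip consumes a distinct critical chord, and the paper's own $\sigma_3$ proof shows that even $k=d+1$ requires tracking which of the (at most two) critical chords is approached at each stage rather than a counting bound. These three issues would each have to be resolved before the sketch could become a proof.
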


\subsection{Properties of Identity Return Polygons}\label{properties}

In what follows, we extract some facts about identity return polygons in general.  Examples~\ref{exp-irp-period3} and \ref{period-d-1} show that the following theorem is sharp.

\begin{thm} \label{noperiod2} There can be no period 2 identity return $k$-gon for $\sigma_d$ for $k\ge d$.
\end{thm}

\begin{proof}  It suffices to show that there does not exist an identity return $d$-gon of period 2 for $\sigma_d$, since an identity return $k$-gon for $k>d$ would contain an identity return $d$-gon.
We argue by induction on $d$.  It can be easily checked that there does not exist a period $2$ identity return triangle for the map $\sigma_3$; all possible examples result in circular order reversing (see Figure~\ref{irt2}).  Now assume as the induction hypothesis that there does not exist an identity return $(d-1)$-gon of period $2$ under $\sigma_{d-1}$.  By way of contradiction assume that there does exist an identity return $d$-gon of period $2$ under $\sigma_d$.   Let $a_1b_1,a_2b_2,...,a_db_d$ denote the itineraries of vertices of the identity return $d$-gon, with $a_i,b_i \in B=\{0,1,...,d-1\}$ for all $i=1,...,d$. Consider the list of symbols $A=(a_1,...,a_d,b_1,...,b_d)$.  Each element of $B$ must appear in at least one entry of $A$, else we could define a identity return $d$-gon of period $2$ for $\sigma_{d-1}$, which would contain an identity $(d-1)$-gon, contradicting the induction hypothesis. This leads to two cases. 

\smallskip 

\noindent {\bf Case 1. } Suppose that for some symbol $0\leq k\leq d-1$, the symbol $k$ appears more than twice in list $A$. 

\begin{figure}[h]
 \begin{center}
    \includegraphics[scale=0.8]{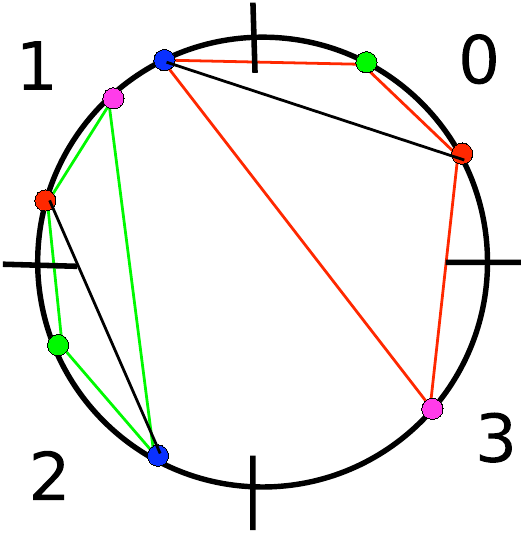}
      \caption{Case 1 for $\sigma_4$ where $m=3$.}
       \label{sigma4-4gon-1}
    \end{center}
 \end{figure}

By the pigeon-hole principle, this means that some other symbol $m\not=k$ will only appear once in $A$. 
Note that any such identity return polygon will be of the form shown in Figure~\ref{sigma4-4gon-1} (illustrated for $d=4$ and $m=3$, up to some rotational symmetry). By removing the vertex with that symbol from the polygon, one obtains a $(d-1)$-gon which uses $d-1$ symbols.  This contradicts the induction hypothesis.

\begin{figure}[h]
 \begin{center}
     \includegraphics[scale=0.28]{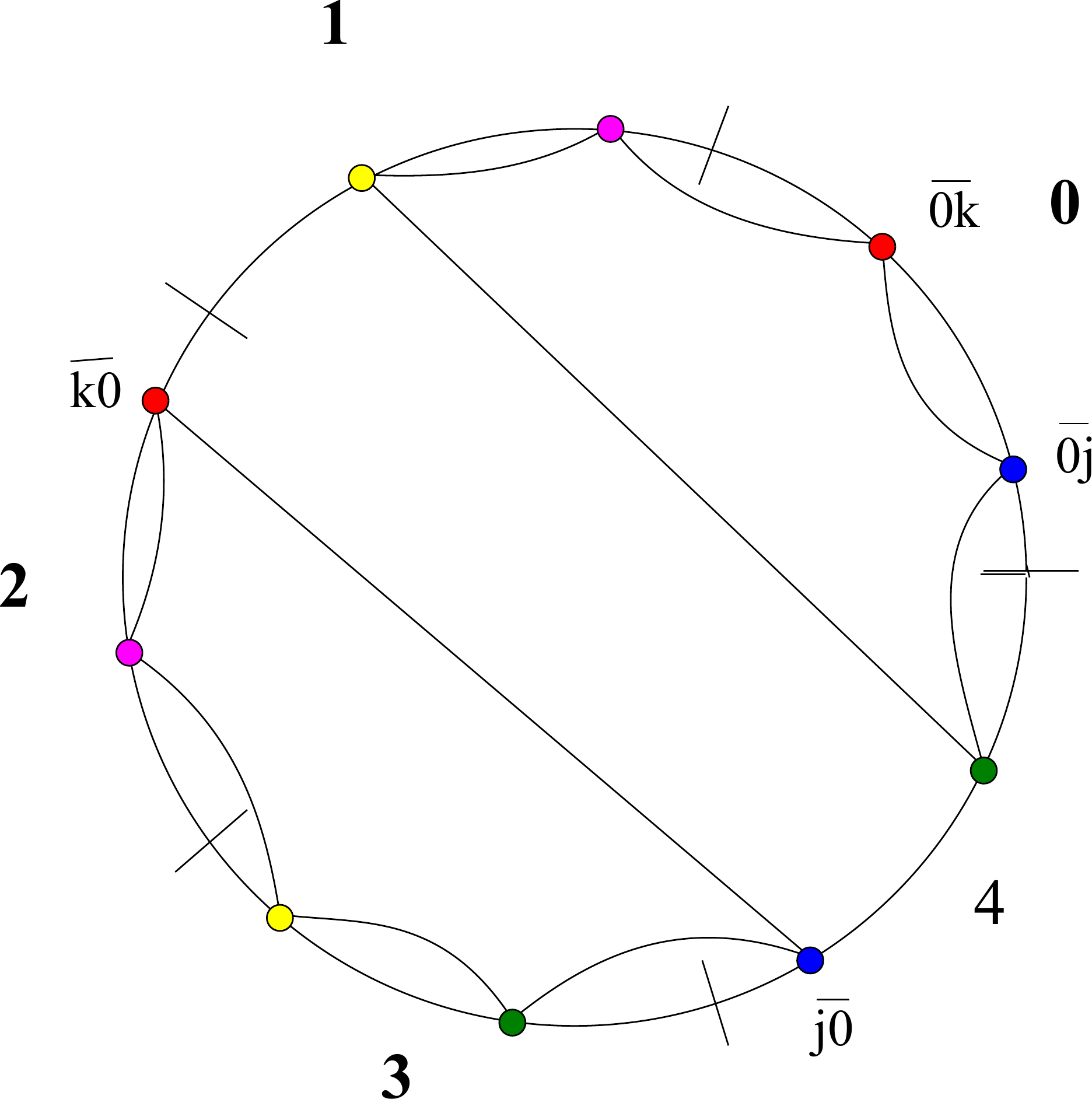} \hspace{0.05in}  
     \includegraphics[scale=0.28]{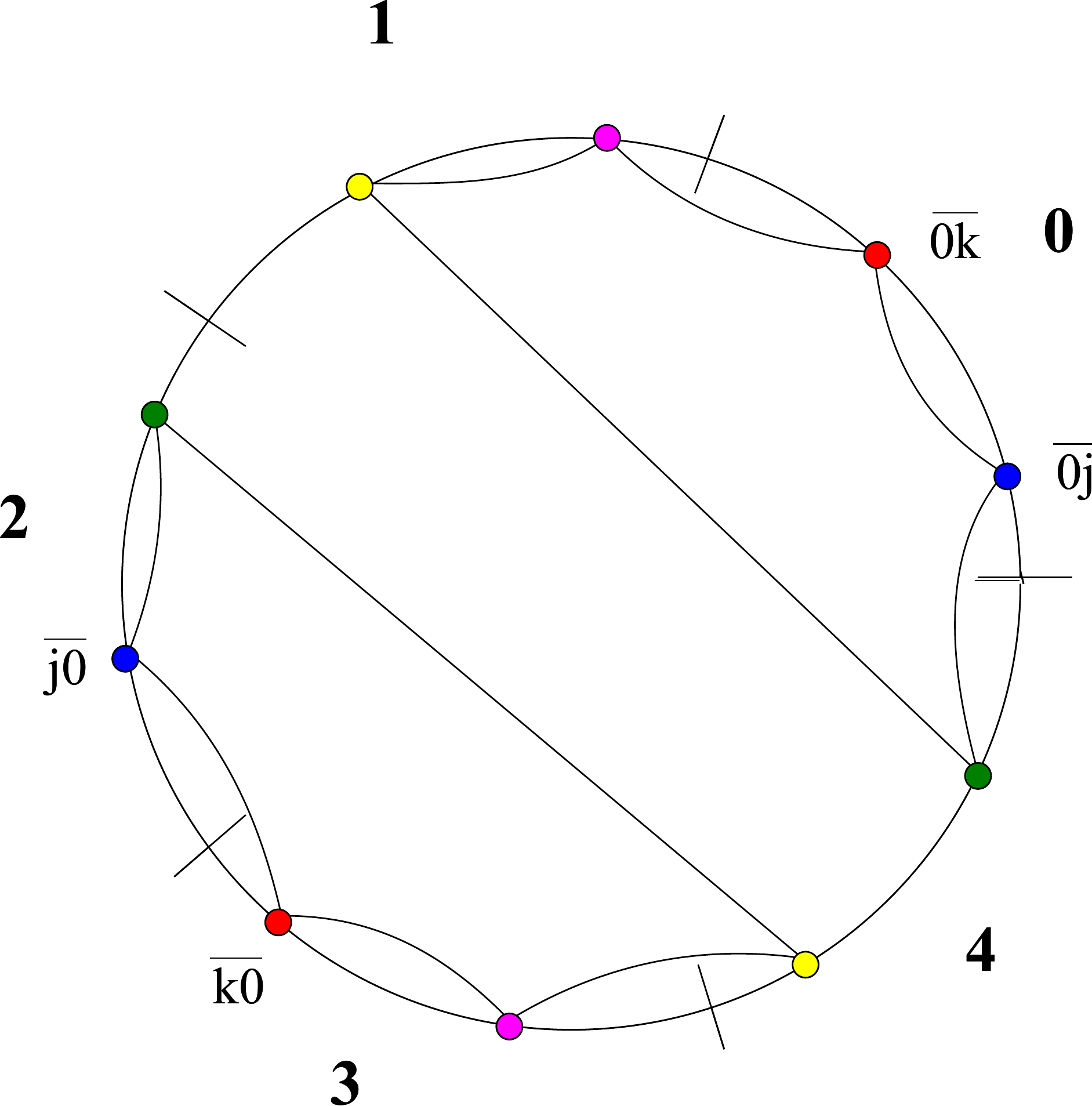}
      \caption{Case 2 for $\sigma_5$ with two vertices of a polygon in the ``$0$'' section.}
       \label{sigma5-5gon-2}
    \end{center}
 \end{figure}

\noindent {\bf Case 2. } Suppose that each symbol $0\leq k\leq d-1$ appears exactly twice in list $A$. 

\smallskip
Then there are two vertices of the polygons in each $1/d$ section of the circle corresponding to the symbols in $B$.  Thus, one of the two  polygons in the orbit must have two vertices in a $1/d$ section of the circle corresponding to the symbol $0$ in $B$ or to the symbol $d-1$ in $B$; otherwise the polygons would cross.   First, let us assume two vertices of a polygon are in the  ``$0$'' section (see Figure~\ref{sigma5-5gon-2}).  The vertex  in this section closest to the point $\overline 0\in\ucirc$, denote it $\overline{0j}$,  represents the first vertex (in counterclockwise circular order from $\overline 0\in\ucirc$)  of the identity return polygon on the circle.   The next vertex in order will be $\overline{0k}$ for some $k>j\in B$. Since the first coordinates of these vertices are $0$, then mapping them forward under $\sigma_d$ will yield vertices $\overline{j0}$ and $\overline{k0}$.  Since $j<k$, these vertices cannot map to the same $1/d$ section.

 Since circular order must be preserved, and all symbols are used, the two vertices must map either to vertices in adjacent $\frac1d$ sections (see Figure~\ref{sigma5-5gon-2} on right), or $\overline{j0}$ must be the largest (in circular order) vertex in the image polygon (see Figure~\ref{sigma5-5gon-2} on left).
Since both vertices have $0$ as their second coordinate,  they must each be the first vertex of the polygons in the sections in which they appear. If the vertices map to adjacent $\frac1d$ sections, this is impossible because $\overline{j0}$ (preceeding $\overline{k0}$  in circular order in the image polygon) would not be the smallest in its section.  If $\overline{j0}$ is the largest vertex in the image polygon, then preserving circular order requires $\overline{k0}$ to be the smallest vertex in the image polygon, contradicting $j<k$.  All options lead to a contradiction, so no such polygon orbit is possible.

The argument is similar if an identity return polygon has two vertices in the $(d-1)$ section.
\end{proof}

\begin{figure}[h]
 \begin{center}
   \includegraphics[scale=1]{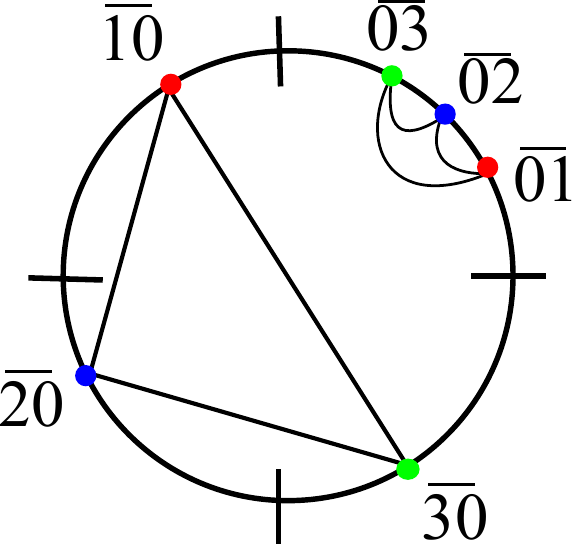}
\caption{Period 2 identity return triangle for $\sigma_4$.}\label{period2-d-1}
    \end{center}
\end{figure}

\begin{ex}\label{period-d-1}
Period $2$ identity return $(d-1)$-gons exist under $\sigma_d$  for all $d>3$. See Figure~\ref{period2-d-1} for $d=4$. The vertices of an example of such a $(d-1)$ -gon for $\sigma_d$ in $d$-nary expansion are $$\{\overline{01}, \overline{02}, \overline{03}, \dots, \overline{0(d-1)}.$$ 
\end{ex}

\subsection{Some Properties of Identity Return Triangles for $\sigma_3$.}

The following propositions present a series of elementary facts about identity return triangles for $\sigma_3$.

 \begin{figure}[h]
 \begin{center}
   \includegraphics[scale=0.85]{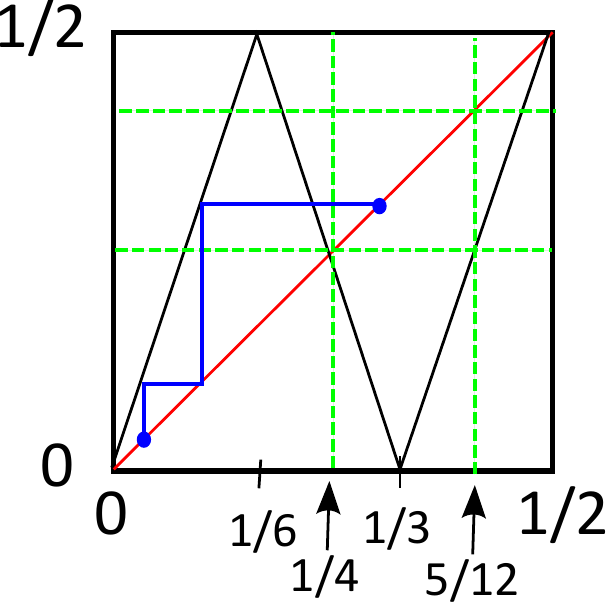} \hspace{0.1in}
   \includegraphics[scale=0.85]{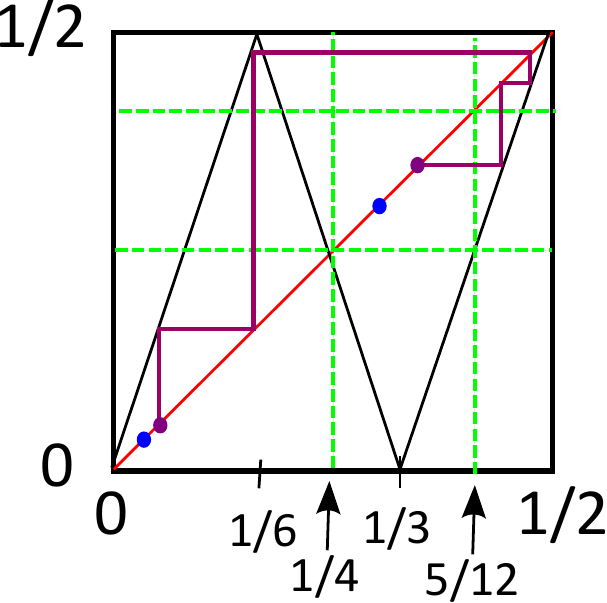}
     \caption{Spider web diagrams for leaf length function $\tau_3$.}
       \label{webdiagram}
    \end{center}
\end{figure}

\begin{prop}\label{irt3-closetocrit}
Let $\ell$ be a leaf in a $3$-invariant lamination  which is not eventually of fixed length.  Then $\ell$ must eventually get within $\frac1{12}$ of a critical length.
\end{prop}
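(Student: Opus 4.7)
The plan is to analyze the piecewise-linear leaf-length function $\tau_3$ from Lemma~\ref{lem-tau} directly. Since the only critical length for $\sigma_3$ is $1/3$, the phrase ``within $1/12$ of a critical length'' translates to the target interval $I=[1/4,\,5/12]$, and the proposition reduces to: every orbit of $\tau_3$ on $[0,1/2]$ that is not eventually a fixed point must visit $I$.

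First I would reduce to the case $|\ell|\ge 1/4$. By Lemma~\ref{lem-length}, if $|\ell|<1/4=1/(d+1)$ then some forward iterate has length at least $1/4$, so we may replace $\ell$ by that iterate. By Proposition~\ref{fixedlengths}, the nonzero fixed lengths for $\tau_3$ are $1/4$ and $1/2$, so the hypothesis that $\ell$ is not eventually of fixed length forces $|\ell|\in(1/4,1/2)\setminus\{1/2\}$. If $|\ell|\in(1/4,\,5/12]$, we are already in $I$ and we are done. The remaining case is $|\ell|\in(5/12,\,1/2)$.

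For $x\in(1/3,1/2]$, $\tau_3(x)=3x-1$ is affine with repelling fixed point $1/2$. As long as consecutive iterates remain in $(1/3,1/2)$, a short induction gives the closed form
\[
\tfrac{1}{2}-\tau_3^n(|\ell|)=3^n\bigl(\tfrac{1}{2}-|\ell|\bigr).
\]
Since $|\ell|<1/2$, the right side grows without bound, so the orbit cannot stay in $(5/12,1/2)$ forever. When it first exits this subinterval, it exits downward: if $\tau_3^{n-1}(|\ell|)\in(5/12,1/2)$, then $\tau_3^n(|\ell|)=3\tau_3^{n-1}(|\ell|)-1\in(1/4,1/2)$, and picking the least $n$ with $\tau_3^n(|\ell|)\le 5/12$ yields $\tau_3^n(|\ell|)\in(1/4,\,5/12]\subset I$, as required.

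The only subtlety is making sure the closed form for $\tau_3^n(|\ell|)$ stays valid up to the moment of exit; that is, no intermediate iterate jumps into a different branch of $\tau_3$. This is automatic, since starting in $(5/12,1/2)$ the only way to leave the branch $(1/3,1/2)$ under forward iteration is to fall into $(1/4,1/3)\subset I$, which is itself the event we want. So no genuine obstacle arises; the proof amounts to reading off the appropriate cobweb from Figure~\ref{webdiagram}, combined with the repelling nature of the fixed point $1/2$.
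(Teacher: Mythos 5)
Your proof is correct and follows essentially the same route as the paper's: reduce to $|\ell|\ge 1/4$ via Lemma~\ref{lem-length}, handle $(1/4,5/12]$ directly, and for $(5/12,1/2)$ use that $1/2$ is a repelling fixed point of the branch $\tau_3(x)=3x-1$ to show the orbit eventually drops into $(1/4,5/12]$. The only difference is that you make the repulsion quantitative with the closed form $\tfrac12-\tau_3^n(|\ell|)=3^n\bigl(\tfrac12-|\ell|\bigr)$ and explicitly verify the orbit cannot skip over the target interval, where the paper appeals more informally to the cobweb diagram; your version is slightly more rigorous but identical in substance.
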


\begin{proof}  Let $\ell$ be a leaf in an invariant lamination under $\sigma_3$ whose length is not eventually fixed.  
By Proposition~\ref{fixedlengths}, the fixed lengths under the leaf length function $\tau_3$ are $\frac14$ and $\frac12$.   By Lemma~\ref{lem-length}, each leaf of length $<\frac14$ must eventually iterate to a leaf $\ell_i=\sigma_3^i(\ell)$ of length $\ge\frac14$ (see ``spider web diagrams''  in Figure~\ref{webdiagram}).  So assume $|\ell|>\frac14$.  If $\frac14<|\ell|<\frac5{12}$, then we can place a critical chord so that in the endpoint metric (Definition~\ref{endpoint-metric}) $\ell$ is within $\frac1{12}$ of the critical chord.  So we may assume $\frac5{12}<|\ell|<\frac12$.     We apply the leaf length function $\tau_3$ iteratively, observing that $\tau_3(|\ell|)$ is repelled from $\frac12$ because the graph of $\tau_3$ is below the identity on the interval $(\frac13,\frac12)$.  Thus, we see that there is a first $k\ge 1$ such that $\frac14<\tau_3^k(|\ell|)<\frac5{12}$ (see Figure~\ref{webdiagram} on right).  Hence, $\sigma^k_3(\ell)$ is within $\frac1{12}$ of a critical chord.
\end{proof}

 \begin{figure}[h]
 \begin{center}
   \includegraphics[scale=1]{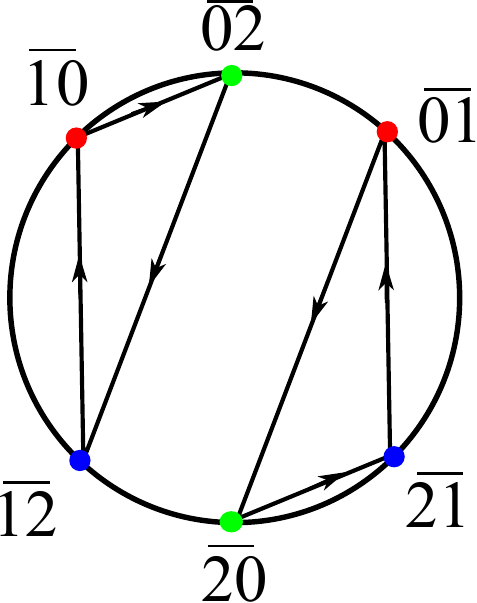}
     \caption{Example of an ``almost'' identity return triangle for $\sigma_3$ of period 2; it fails because the triangle maps forward reversing circular order.}
       \label{irt2}
    \end{center}
\end{figure}

\begin{prop}\label{nofixed}
An identity return triangle in a $3$-invariant lamination cannot have a side of fixed length.
\end{prop}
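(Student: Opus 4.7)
The plan is to split according to Proposition~\ref{fixedlengths}, which identifies the positive fixed lengths of $\tau_3$ on $(0,\tfrac12]$ as exactly $\tfrac14$ and $\tfrac12$, and handle each case. The diameter case is quick: distinct diameters cross at the center of $\disk$, so in the lamination the side's forward orbit collapses to a single fixed leaf, and every triangle in the orbit of $T$ then shares this side. Orbit disjointness thus forces period one, which would demand three fixed points of $\sigma_3$; but there are only two, $0$ and $\tfrac12$.

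For the $\tfrac14$ case, write the side as $\ell_0 = \cl{v_1v_2}$ with $v_2 = v_1 + \tfrac14 \pmod 1$ and let $p$ be the period of $T$. I would reduce to $p = 2$ in three steps. First, identity return gives $\sigma_3^p(v_2) - \sigma_3^p(v_1) \equiv 3^p/4 \pmod 1$, and $3 \equiv -1 \pmod 4$ forces $p$ to be even. Second, because $\tau_3(\tfrac14) = \tfrac14$, every iterate $\ell_i := \sigma_3^i(\ell_0)$ has length $\tfrac14$, and orbit disjointness forces the $\ell_i$ to be pairwise distinct leaves with pairwise disjoint endpoint pairs (else two orbit triangles would share a side or a vertex). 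Third, a short geometric observation---two leaves of length $\tfrac14$ with disjoint endpoints whose short arcs overlap necessarily cross---shows that the $p$ short arcs (each of length $\tfrac14$) are pairwise disjoint in $\ucirc$; four disjoint closed arcs of length $\tfrac14$ would have to tile $\ucirc$ and thereby share endpoints, so $p \le 3$, and together with the parity forced above, $p = 2$.

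The remaining step, and the main obstacle, is a finite check that eliminates $p = 2$. Period two forces $v_1 = k/8$ with $k \in \{1,3,5,7\}$ (after ruling out the fixed points $0$ and $\tfrac12$). When $k \in \{1,5\}$, one computes $\sigma_3(v_1) = v_2$ and $\sigma_3(v_2) = v_1$, so $T$ and $\sigma_3(T)$ share two vertices, violating orbit disjointness. When $k \in \{3,7\}$, requiring the six vertices of $T \cup \sigma_3(T)$ to be distinct leaves only the two candidates $v_3 \in \{\tfrac14, \tfrac34\}$, so only two candidate triangles remain up to symmetry. For each, a direct comparison of the cyclic order of $\{v_1, v_2, v_3\}$ with that of their $\sigma_3$-images shows that $\sigma_3$ reverses cyclic order on the vertices of $T$; since polygons in a sibling $3$-invariant lamination must map preserving cyclic order (cf.\ the remark following Definition~\ref{id-return} and \cite[Theorem~3.2]{Mimbs:2013}), no such $T$ lives in the lamination. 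These surviving candidates are precisely the ``almost'' identity return triangles of Figure~\ref{irt2}---the example the authors have already flagged as failing by circular-order reversal.
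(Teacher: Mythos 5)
Your proof is correct, and it follows the same top-level decomposition as the paper (split by Proposition~\ref{fixedlengths} into the $\tfrac12$ and $\tfrac14$ cases), but the internal arguments in the $\tfrac14$ case differ in an interesting way. The paper argues that the period cannot exceed $3$ ``or it will meet its iterated image'' (essentially the disjoint--short--arcs packing argument you spell out), and then rules out period $3$ by a ternary-expansion argument and period $2$ by pointing to the cyclic-order--reversing example of Figure~\ref{irt2}. You replace the ternary calculation with the cleaner observation that identity return forces $3^p \equiv 1 \pmod 4$, hence $p$ even, which kills $p=3$ (and all odd periods) at once; combined with the packing bound $p\le 3$ this lands directly on $p=2$. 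You then do what the paper leaves implicit: enumerate all period-$2$ configurations $v_1=k/8$ and check each. The only thing I would tighten is the parenthetical ``after ruling out the fixed points $0$ and $\tfrac12$'': note that excluding $v_1\in\{0,\tfrac12\}$ only removes $k\in\{0,4\}$, and you still need the (easy, same-flavored) observation that $k\in\{2,6\}$ is impossible because then $v_2=v_1+\tfrac14$ would itself be a fixed point, producing a shared vertex between $T$ and $\sigma_3(T)$. With that spelled out, $k\in\{1,3,5,7\}$ is justified, the $k\in\{1,5\}$ pairs are swapped (shared side), and the $k\in\{3,7\}$ cases reduce to exactly the cyclic-order--reversing example the paper invokes. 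Net effect: same skeleton, but your parity trick is a genuine simplification over the paper's ternary argument, and your explicit enumeration makes the period-$2$ elimination airtight rather than by reference to a single figure.
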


\begin{proof} 
By Proposition~\ref{fixedlengths}, the fixed lengths under the leaf length function $\tau_3$ are $\frac14$ and $\frac12$.  Let $\ell$ be a side of fixed length of an identity return triangle $T$.  If $|\ell|=\frac12$, then $\ell$ is a diameter, and if $\ell$ is not fixed, $\sigma_3(\ell)$ will cross $\ell$, a contradiction of $\ell$ being a leaf of an invariant lamination.  On the other hand, if $\ell$ is fixed, then $\sigma_3(T)$ meets $T$, contradicting $T$ being an identity return triangle. If $|\ell|=\frac14$, then the period of $\ell$ cannot be greater than 3 or it will meet its iterated image.  If the period of $\ell$ were 3, then both endpoints of $\ell$ would have period 3 repeating ternary expansions (see Figure~\ref{irt-example} for an example). But if we add $\frac14=\overline{01}$ to a repeating ternary expansion for one endpoint of $\ell$, the other endpoint of $\ell$ will not be period 3 repeating in ternary.  It is possible to construct a period 2 triangle which appears to satisfy the definition (see Figure~\ref{irt2}) and has a side of fixed length. However, there is no room for critical chords disjoint from the triangles in the orbit, and the triangle maps forward reversing circular order, thus it cannot be in a sibling $d$-invariant lamination.
\end{proof}

 \begin{figure}[h]
 \begin{center}
   \includegraphics[scale=1]{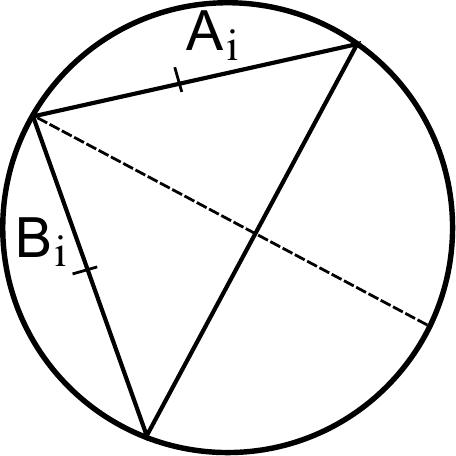} \hspace{0.2in}
   \includegraphics[scale=1]{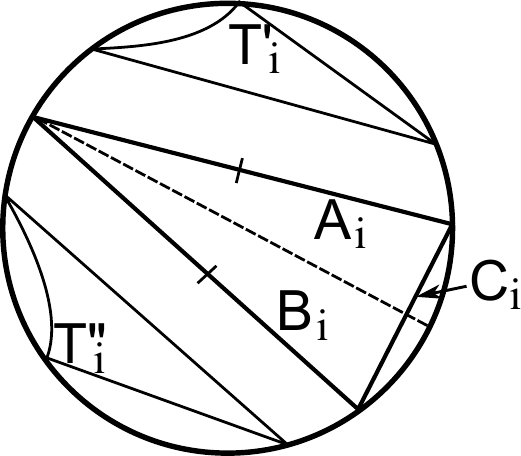}
     \caption{On left, a triangle with two equal sides of length between $\frac14$ and $\frac13$ ; on right, the length is between $\frac13$ and $\frac5{12}$.}
       \label{straddle}
    \end{center}
\end{figure}

\begin{rem}  By way of notation, if $T$ is an identity return triangle of least period $n$ in a 3-invariant lamination, we denote the sides of the triangle by $T=ABC$, and the orbit of the triangle by  $$T=T_0=A_0B_0C_0\mapsto \sigma_3(T_0)=T_1=A_1B_1C_1\mapsto \dots \mapsto T_n=A_nB_nC_n=T_0.$$  This notation can be extended to any identity return polygon.

\end{rem}

\begin{prop}\label{irt3-notequal}
Let $ABC$ be an identity return triangle of period $k$.  If two sides $A$ and $B$ are of the same length, then there exists a unique $i \in [0, k)$ such that $A_i$ and $B_i$ are within $\frac1{12}$ of critical.  Further, there is no critical chord $c$ such that $A_i$ and $B_i$ are both within $\frac{1}{12}$ of $c$ in the endpoint metric.\end{prop}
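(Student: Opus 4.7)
The plan is to establish, in order: (i) that $|A_i|=|B_i|$ for every iterate $i$, (ii) existence of an iterate at which both sides are within $\frac1{12}$ of critical, (iii) the ``different critical chords'' claim at any such iterate, and (iv) uniqueness of the iterate in $[0,k)$.

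Parts (i) and (ii) are the warm-up. Since $\tau_3$ depends only on length and $|A|=|B|$, induction gives $|A_i|=|B_i|$ for all $i$. By Proposition~\ref{nofixed} no side of an identity return triangle is of fixed length, so the common length orbit under $\tau_3$ is not eventually fixed. Proposition~\ref{irt3-closetocrit} then produces an iterate $i_0\in[0,k)$ with $|A_{i_0}|=|B_{i_0}|\in(\frac14,\frac5{12})$; as in the proof of that proposition, any chord of such length admits a critical chord within endpoint distance $\frac1{12}$, settling existence.

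For (iii) I would argue by contradiction. Suppose a single critical chord $c=\cl{x_cy_c}$ satisfies $\edis(A_{i_0},c)<\frac1{12}$ and $\edis(B_{i_0},c)<\frac1{12}$. Because $A_{i_0}$ and $B_{i_0}$ are adjacent sides of the triangle $T_{i_0}$, they share exactly one vertex $\beta$; write $A_{i_0}=\cl{\alpha\beta}$ and $B_{i_0}=\cl{\beta\gamma}$. The endpoint-metric bound forces each of $\alpha,\beta,\gamma$ to lie within arc distance $\frac1{12}$ of an endpoint of $c$. A short case analysis on which endpoint of $c$ the common vertex $\beta$ approaches, combined with $|A_{i_0}|=|B_{i_0}|$, pins $\alpha$ and $\gamma$ near the same endpoint of $c$; then $|C_{i_0}|<\frac16$ and $A_{i_0},B_{i_0}$ are nearly reflections of one another across $c$. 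Tracking this configuration under $\sigma_3$ forces either a degeneration of $T_{i_0}$ or a violation of the circular-order-preserving, pairwise-disjoint orbit structure required of an identity return polygon in a sibling $d$-invariant lamination (cf.\ the remarks following Definition~\ref{id-return}), either of which is the desired contradiction.

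The main obstacle is (iv), uniqueness. Suppose distinct iterates $i_1<i_2\in[0,k)$ both satisfy the critical-proximity condition. By (iii), at each $i_j$ the sides $A_{i_j},B_{i_j}$ approach distinct critical chords. Since $|A_{i_j}|>\frac16$, Theorem~\ref{lem-centralstrip} provides a nontrivial central strip at each such iterate whose boundary contains $A_{i_j}$, arising from the full sibling collection of $A_{i_j}$. I would then invoke the Central Strip Lemma (Theorem~\ref{thm-centralstriplemma}) applied to the strip at $i_1$: the image $A_{i_1+1}$ cannot reenter that strip, and any later iterate of $A$ that reenters it with both endpoints in one component of its intersection with $\ucirc$ must be preceded by an intermediate iterate that approaches a critical chord outside the strip within the quantitative endpoint bound of Theorem~\ref{thm-centralstriplemma}(3). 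Combining this with the pairwise disjointness of the orbit polygons of the identity return triangle, and with (iii) forcing distinct critical chords at $i_1$ and $i_2$, one derives that the close approach required at $i_2$ is incompatible either with (iii) or with $T_{i_1}$ and $T_{i_2}$ being disjoint. The delicate point is bookkeeping which critical chord is approached at each iterate and reconciling the Central Strip Lemma bounds with the disjointness of the orbit triangles.
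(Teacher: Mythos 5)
Your parts (i) and (ii) agree with the paper: $|A_i|=|B_i|$ for all $i$ because $\tau_3$ depends only on length, and Propositions~\ref{irt3-closetocrit} and \ref{nofixed} produce an $i$ with $\frac14<|A_i|=|B_i|<\frac5{12}$. After that, however, you and the paper diverge, and your outline has real gaps where the paper has short, concrete arguments.

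For the ``different critical chords'' claim, the paper does not argue by tracking the configuration forward under $\sigma_3$. It splits on whether $|A_i|<\frac13$ or $|A_i|>\frac13$. If $|A_i|<\frac13$, then $A_i$ and $B_i$ together subtend more than half the circle, and (referring to Figure~\ref{straddle}) there is no room to place siblings of $T_i$ or suitable critical chords disjoint from $T_i$; since the lamination is sibling $3$-invariant, this case cannot occur. So $|A_i|>\frac13$, and then the region of $\disk\setminus T_i$ bounded only by $A_i$ and the region bounded only by $B_i$ each force a critical chord of their own, which are necessarily distinct. (One can also check, from $|A_i|=|B_i|<\frac5{12}$, that $|C_i|>\frac16$, so a single critical chord placed within $\frac1{12}$ of $A_i$ in the region outside $A_i$ has endpoint distance at least $|C_i|>\frac1{12}$ from $B_i$.) Your step (iii) instead derives that if both are near the same $c$ then $\alpha,\gamma$ are near the same endpoint of $c$ and $|C_{i_0}|<\frac16$ --- which is correct as far as it goes --- but the crucial sentence ``tracking this configuration under $\sigma_3$ forces either a degeneration of $T_{i_0}$ or a violation of \dots'' is a placeholder, not an argument. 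In fact nothing obviously degenerates: $\sigma_3$ just maps this thin triangle to a short triangle near $\sigma_3(x_c)$. The contradiction the paper gets comes from the \emph{room-for-siblings} constraint at iterate $i$ itself, not from what happens after applying $\sigma_3$.

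For uniqueness you invoke the full Central Strip Lemma with a bookkeeping argument you concede you have not carried out. The paper's uniqueness step is much lighter: once $|A_i|=|B_i|\in(\frac13,\frac5{12})$, the triangle $T_i$ is pinned between the (at most two) critical chords, and any other iterate $T_j$ with $|A_j|=|B_j|$ in the same window would have to occupy the same position and hence cross $T_i$, contradicting pairwise disjointness of the orbit polygons. No central-strip machinery is needed. So the proposal has the right skeleton but is missing the two ideas that actually close the argument: ruling out $|A_i|<\frac13$ via the siblings constraint, and using the crossing/disjointness of orbit polygons for uniqueness.
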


\begin{proof}
Suppose that $T_0=A_0B_0C_0$ is an identity return triangle and that $|A_0|=|B_0|$. Then for all $i$, we have $|A_i|=|B_i|$.  By Propositions~\ref{irt3-closetocrit} and \ref{nofixed}, there is an $i$ such that $\frac14<|A_i|=|B_i|<\frac5{12}$.   If $|A_i|<\frac13$, as illustrated on the left in Figure~\ref{straddle},  then, similar to the proof of Proposition~\ref{nofixed}, there is no room for two critical chords disjoint from $T_i$, since $A_i$ and $B_i$ together occupy more than half the circle's arc. Hence, there is no room for siblings of $T_i$, so no such sibling invariant lamination.

If $|A_i|>\frac13$, then there are two regions in $\disk\setminus T_i$ where one can place critical chords sufficiently close to $A_i$ and $B_i$, the region with only $A_i$ on its boundary, and the region with only $B_i$ on its boundary.  See Figure~\ref{straddle} on right where we include the siblings $T'_i$ and $T''_i$ of triangle $T_i$.  Since $|A_i|<\frac5{12}$,  each of $A_i$ and $B_i$ is within $\frac1{12}$ of a critical chord. That the $i$th iterate is unique with this property is clear, since any other such iterate would cross $T_i$.
\end{proof}

 \begin{figure}[h]
 \begin{center}
   \includegraphics[scale=0.55]{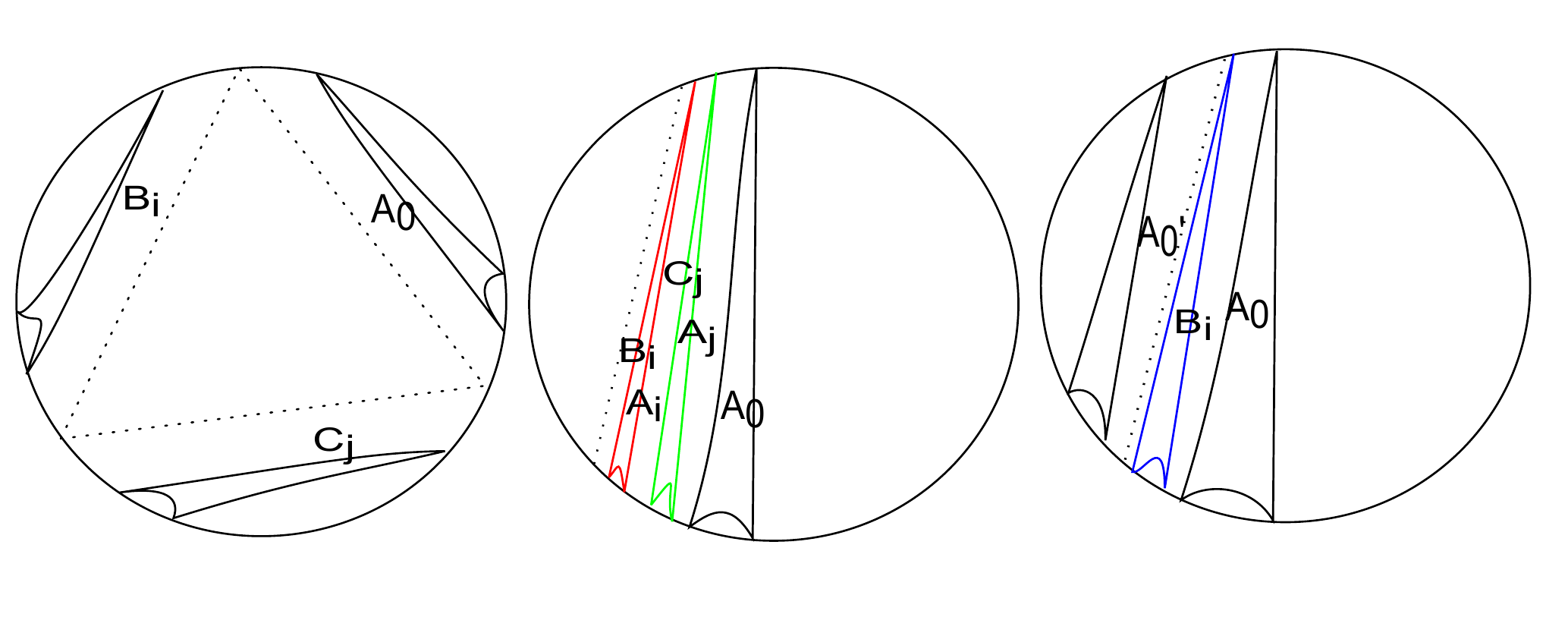} \hfill
     \caption{On left, an identity return polygon approaches three different critical chords; in middle, an identity return polygon approaches just one critical chord; on right, an identity return polygon approaches exactly two critical chords (only one shown).} \label{allcrit}
    \end{center}
\end{figure}

\begin{prop}\label{thinpoly}
Let $T$ be an identity return $n$-gon ($n\ge3$) in a $3$-invariant lamination.  Then in the orbit of $T$ two sides will simultaneously approach within $\frac1{12}$ of a critical length. In fact,  one of the following happens:
\begin{enumerate}
\item Two sides of $T$ approach within $\frac1{12}$ of two different critical chords at the same iterate.
\item Two sides of $T$ approach within $\frac1{12}$ of one side of the same critical chord at the same iterate.
\end{enumerate}
Furthermore, at that iterate these two sides are longer than any other side.
\end{prop}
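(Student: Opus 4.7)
The plan is first to invoke Kiwi's Theorem~\ref{kiwi} to reduce to $n=3$: any identity return $k$-gon in a $3$-invariant lamination satisfies $k\le 3$, so combined with $n\ge 3$ this forces $n=3$, and $T=ABC$ is an identity return triangle. By Proposition~\ref{nofixed} no side is of fixed length, so by Proposition~\ref{irt3-closetocrit} each side attains length in $(1/4,5/12)$ at some iterate along the orbit; equivalently, each side is within $1/12$ of some critical chord in the endpoint metric at some iterate.

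Next I would locate an iterate $i$ at which the longest side $L_i$ of $T_i$ itself has length in $(1/4,5/12)$. Letting $L^*=\max_j|L_j|$, the previous step gives $L^*\ge 1/4$. If $L^*\in[1/3,5/12]$, any iterate attaining it works. If $L^*>5/12$, I would use Lemma~\ref{lem-tau} and the fact that $1/2$ is a repelling fixed point of $\tau_3$ to argue that the orbit of the side realizing $L^*$ must at some later iterate have length in $(1/4,5/12)$ while still being the longest in its triangle.

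At this iterate I would split on whether all three arcs of $T_i$ are $\le 1/2$ (case~A) or one arc exceeds $1/2$ (case~B). In case~A, side lengths equal arc lengths and sum to $1$; combined with $|L_i|\le 5/12$, this forces the second-longest side to have length at least $(1-|L_i|)/2\ge 7/24>1/4$ and at most $|L_i|\le 5/12$, so the two longest sides are simultaneously close to critical (any third close side only reinforces the conclusion), giving case~(1) or~(2) of the proposition according as the two close sides approach distinct critical chords or the same one. In case~B, the two shorter sides sum to $|L_i|<5/12$ and the triangle lies in a half-disk; here I would apply the Central Strip Lemma~\ref{thm-centralstriplemma} to the central strip of $L_i$ and its siblings (whose hypothesis $|L_i|>1/(d+1)=1/4$ holds) together with the periodicity of $T$ to track how the orbit must re-enter the strip, extracting an iterate at which a second side is also close to critical.

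The main obstacle is case~B, since the triangle can remain in a thin (case-B) configuration throughout its entire orbit, as happens in Example~\ref{exp-irp-period3} for $d=3$; the desired iterate is not produced by a single-iterate pigeonhole but requires careful control of the arc dynamics via the Central Strip Lemma. The ``longer than any other side'' claim follows immediately at the end: any third side not close to critical has length outside $(1/4,5/12)$, and since it cannot exceed $|L_i|\le 5/12$ by our choice of $i$, it must be shorter than $1/4$, hence shorter than both close sides.
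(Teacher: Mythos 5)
Your proposal takes a genuinely different route from the paper, but it contains real gaps, the most serious of which you already flag yourself.

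First, the paper's proof does not invoke Kiwi's Theorem~\ref{kiwi}, and for good reason: the section's purpose is to give an independent, Central-Strip-Lemma-based proof of Kiwi's result for $\sigma_3$, culminating in Theorem~\ref{noIRQinsigma3}, which itself relies on Proposition~\ref{thinpoly}. Citing Kiwi to reduce $n\ge 3$ to $n=3$ makes the whole chain circular relative to the paper's goals; moreover, it discards precisely the content (the general $n$-gon case) that the paper needs. The paper's proof instead works with three arbitrary sides $A,B,C$ of the $n$-gon directly.

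Second, your reduction to an iterate where \emph{the longest side} has length in $(1/4,5/12)$ is not established. A side that once realizes $L^*>5/12$ shrinks under $\tau_3$ through the interval $(1/4,5/12)$, but there is no reason it must still be the longest side of its triangle at the iterate when it does so; another side may have overtaken it. This step needs the content of Proposition~\ref{irt3-longest}, which you do not use, and even that proposition tracks how the longest side changes only after a close approach, not generically.

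Third, and most importantly, your Case~B — the thin configuration in which one arc exceeds $1/2$, which is exactly what occurs for the period-3 example and in general whenever the polygon stays on one side of a diameter — is left as a plan rather than a proof. You correctly observe it is the main obstacle, but that is where the real work lies. The paper handles this by a completely different decomposition: rather than splitting on arc configuration, it classifies by how many \emph{distinct} critical chords the various sides approach over the whole orbit. If all three approach the same critical chord and no other, an ordering-by-distance argument plus the Central Strip Lemma (applied to the re-entry at iterate $k+i$, where $k$ is the period) yields a contradiction; if all three approach pairwise different critical chords, the three criticals form an all-critical triangle and the same re-entry argument kills the ``outermost'' polygon; what remains is exactly two critical chords, where pigeonhole plus the Central Strip Lemma produces the desired simultaneous approach and the ``two longest sides'' conclusion. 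None of this machinery appears in your sketch of Case~B, so the hard case is unproved.

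Your Case~A observation (that if all three arcs are $\le 1/2$ and the longest is $\le 5/12$, the two largest sides both lie in $(1/4,5/12)$) is a nice elementary remark, and it does give a quick conclusion when the triangle ``straddles'' the center, but it does not by itself distinguish conclusions (1) and (2), and it does not touch the dominant Case~B. As written, the proposal does not constitute a proof.
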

\begin{rem}
Example~\ref{Kiwi} shows that the first case can occur.  The cubic example in Figure~\ref{irt-example} shows that the second case can occur.
\end{rem}

\begin{proof} Let $T_0=A_0B_0C_0\dots$ be an identity return $n$-gon in a $3$-invariant lamination.  By Lemma ~\ref{irt3-closetocrit} all the sides will eventually get within $\frac1{12}$ of a critical chord. In case (1), the polygon must lie between the two critical chords.  Hence, those two sides are longer than $\frac13$, so all other sides are shorter than $\frac13$.  Now assume case (1) does not occur.

Without loss of generality, suppose that side $A_0$ is within $\frac1{12}$ of a critical chord $\ell$.   So there exists an iterate $i\neq0$ such that side $B_i$ is within $\frac1{12}$ of a critical chord, not necessarily $\ell$, and there exists an iterate $0\neq j\neq i$ such that $C_j$ is within $\frac1{12}$ of a critical chord, not necessarily $\ell$, else we are done.

Suppose that the three sides $A_0$, $B_i$, and $C_j$ are close to the same critical chord $\ell$, and approach no other critical chord closely. Without  loss of generality let $A_0$ be the side furthest away from $\ell$, while still being within $\frac1{12}$ of $\ell$.  See Figure~\ref{allcrit} in middle.  Then $T_i$ is closer to $\ell$ and by the Central Strip Lemma (Theorem~\ref{thm-centralstriplemma}) sides $A_i$ and $B_i$ are long, since side A has not  closely approached a different critical chord before iterate $i$. Triangle $T_j$ cannot be closer to $\ell$ since then, by the Central Strip Lemma, the three sides are simultaneously close to $\frac13$ in length. Therefore, $T_j$ is between $T_i$ and $T_0$ (or siblings). Let $T$ be of period $k$, so $T_0=T_k$. Then, the iterate $T_{k+i}$ is the first time the central strip of $C_j$ is re-entered, and since side $C$ is never close to a different critical chord, $C_{k+i}$ needs to connect two components of the central strip  by the Central Strip Lemma. But $T_{k+i}=T_i$ has $C_i$ inside one component, a contradiction.

Suppose that all three sides are close to three different critical chords at iterates $0$, $i$, and $j$. Then the three critical chords must form an all-critical triangle.  See Figure~\ref{allcrit} on left.   One of the polygons, say $T_0$, must be outermost with respect to the all-critical triangle.  As argued in the previous paragraph,  $T_i$ and $T_j$ are then within the central strip of $A_0$, contradicting in a similar fashion the Central Strip Lemma.   

Therefore, the three sides  $A_0$, $B_i$, and $C_j$ must approach exactly two different critical chords. By the pigeon hole principle, two sides must approach the same critical chord $\ell$. Without loss of generality let those sides be $A_0$ and $B_i$. Let $A_0$ be the one further away from the critical chord $\ell$, but still within $\frac1{12}$ of $\ell$. Suppose that $B_i$ is within $\frac1{12}$ of critical chord at iterate $i$. Then $T_i$ is inside the central strip of $A_0$  and its sibling $A_0^\prime$. Hence,   two sides of the polygon $T_i$ are within $\frac1{12}$ of a critical chord $\ell$ at the same iterate, and all other sides are within a central strip, so shorter.
\end{proof}

 \begin{prop}\label{irt3-longest}
Let $T$ be an identity return $n$-gon in a $3$-invariant lamination. Let $A$ and $B$ be two sides that are within $\frac1{12}$ of a critical chord and $C$ be the longest remaining side. If $A$ and $B$ are simultaneously within $\frac1{12}$ of 
\begin{enumerate}
\item  two different critical chords at the same iterate, or
\item the same critical chord at the same iterate,
\end{enumerate} then the following occur, respectively:
\begin{enumerate}
\item[(a)] $\sigma_3(C)$ becomes the longest side, or
\item[(b)] $\sigma_3(A)$ or $\sigma_3(B)$ becomes the longest side.
\end{enumerate}
\end{prop}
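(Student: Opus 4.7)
My plan is to directly compute and compare the image lengths via the leaf length function $\tau_3$ of Lemma~\ref{lem-tau} and the endpoint metric $\edis$ of Definition~\ref{endpoint-metric}. Since a critical chord has length $\tfrac13$ and its two endpoints identify under $\sigma_3$, any side $s$ with $\edis(s,\ell)=\eta\le\tfrac1{12}$ satisfies $|\sigma_3(s)|\le 3\eta\le\tfrac14$; in both cases this immediately bounds $|\sigma_3(A)|$ and $|\sigma_3(B)|$.

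For case~(1), the proof of Proposition~\ref{thinpoly} places $T$ between the two critical chords $\ell_1,\ell_2$, with $A$ and $B$ the strict longest sides and all remaining sides contained in the two complementary circular arcs that separate $\ell_1$ from $\ell_2$, each of arc length strictly less than $\tfrac16$. Hence every other side $s$ has $|s|<\tfrac16$, and the first branch of $\tau_3$ gives $|\sigma_3(s)|=3|s|$; in particular $|\sigma_3(C)|=3|C|$ is the largest image among the non-$A,B$ sides. Writing $\edis(A,\ell_1)=\epsilon_1+\epsilon_2$ and $\edis(B,\ell_2)=\delta_1+\delta_2$, so that $|\sigma_3(A)|=3(\epsilon_1+\epsilon_2)$ and $|\sigma_3(B)|=3(\delta_1+\delta_2)$, the two other arcs have respective lengths $\tfrac16-\epsilon_2-\delta_1$ and $\tfrac16-\delta_2-\epsilon_1$. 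Since $C$ is the longest of the other sides, I would show that $|C|$ is large enough to force $|C|>\epsilon_1+\epsilon_2$ and $|C|>\delta_1+\delta_2$ (after eliminating the borderline tie configurations via the identity-return hypothesis), whence $|\sigma_3(C)|$ exceeds both $|\sigma_3(A)|$ and $|\sigma_3(B)|$ and $\sigma_3(C)$ is the longest side of $\sigma_3(T)$.

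For case~(2), the geometry constrains $A$ and $B$ to sit nested on the same side of the critical chord $\ell$ (otherwise the chord structure would be inconsistent), and $T$ is a thin quadrilateral whose two non-$A,B$ sides lie in the two narrow collar strips between $A$ and $B$. Letting $A$ denote the outer chord with parameters $\epsilon_1,\epsilon_2$ and $B$ the inner with $\alpha_i<\epsilon_i$, the collar strip widths are $\epsilon_1-\alpha_1$ and $\epsilon_2-\alpha_2$, so $|C|\le\max(\epsilon_1-\alpha_1,\epsilon_2-\alpha_2)<\epsilon_1+\epsilon_2$. Then $|\sigma_3(C)|=3|C|<3(\epsilon_1+\epsilon_2)=|\sigma_3(A)|$, and $\sigma_3(A)$ is the longest side of $\sigma_3(T)$; if instead $B$ is the outer chord the roles of $A$ and $B$ are swapped and $\sigma_3(B)$ is the longest.

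The main obstacle is the borderline behavior in case~(1): the bounds $\epsilon_i+\epsilon_j,\delta_i+\delta_j\le\tfrac1{12}$ are only just tight enough to force the strict inequalities $|\sigma_3(C)|>|\sigma_3(A)|,|\sigma_3(B)|$, and numerically there exist marginal configurations (for instance the symmetric one where all four parameters equal $\tfrac1{24}$) where the comparison would collapse to equality. Such configurations correspond to polygon orbits that either pick up a critical diagonal or drop into a strictly shorter periodic cycle, and so cannot themselves be identity return; ruling them out is the delicate step, and uses the identity-return hypothesis in the same spirit as the rigidity arguments in Propositions~\ref{nofixed} and~\ref{irt3-notequal}.
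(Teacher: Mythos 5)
Your case (1) has a genuine gap, and at a different place than where you flag one.

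First, the claim that ``every other side $s$ has $|s|<\tfrac16$'' is simply false when $n=3$: for a triangle, $|C|=1-|A|-|B|=\tfrac13-\alpha-\beta$ with $\alpha,\beta<\tfrac1{12}$, so $|C|>\tfrac16$. In that case $\tau_3$ acts on $|C|$ via its \emph{second} branch, $|\sigma_3(C)|=1-3|C|$, not the first; your direct tripling computation does not apply. The paper splits case (1) into the subcases $|C|\ge\tfrac16$ and $|C|<\tfrac16$ precisely to handle this. In the first subcase an exact identity falls out: $|\sigma_3(C)|=1-3|C|=3\alpha+3\beta+3\gamma=|\sigma_3(A)|+|\sigma_3(B)|+\sum_k|\sigma_3(D_k)|$, which immediately makes $\sigma_3(C)$ longest (strictly, since $\alpha,\beta>0$). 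Your route would have to recover this separately.

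Second, even when $|C|<\tfrac16$ so that $|\sigma_3(C)|=3|C|$, the inequality $|C|>\alpha$ (your $\epsilon_1+\epsilon_2$) cannot be obtained by arithmetic alone once $n\ge 5$: e.g.\ with $\alpha,\beta$ near $\tfrac1{12}$ and the remaining $\tfrac16$ of arc split among three roughly equal sides, $|C|$ drops below $\alpha$, so $|\sigma_3(C)|<|\sigma_3(A)|$. The paper sidesteps this not by delicate arithmetic but by a geometric observation you never invoke: since $T$ is squeezed between two critical chords, if $\sigma_3(T)$ were not contained in one half of the circle it would cross $T$, contradicting the pairwise disjointness of an identity return orbit. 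Containment in a half-disk forces one side $C'$ of $\sigma_3(T)$ to subtend the outer arc and hence be strictly longer than both $\sigma_3(A)$ and $\sigma_3(B)$; since $C'$ is not $\sigma_3(A)$ or $\sigma_3(B)$ and $\sigma_3(C)$ is the longest among the remaining images, $\sigma_3(C)$ must be longest. So the hypothesis that does the work is the non-crossing of iterates, not the ruling out of ``tie'' configurations as you speculate in your last paragraph. The configurations you worry about are indeed impossible, but for this geometric crossing reason rather than for any degeneracy of the orbit.

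Your case (2) is closer in spirit to the paper (both locate all remaining sides between $\sigma_3(A)$ and $\sigma_3(B)$ with a common critical value underneath, and use $|\sigma_3(A)|,|\sigma_3(B)|<\tfrac14$ to pick the outermost image as longest), although the paper's version is stated for general $n$ rather than presupposing a quadrilateral.
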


\begin{figure}[h]
 \begin{center}
   \includegraphics[scale=0.5]{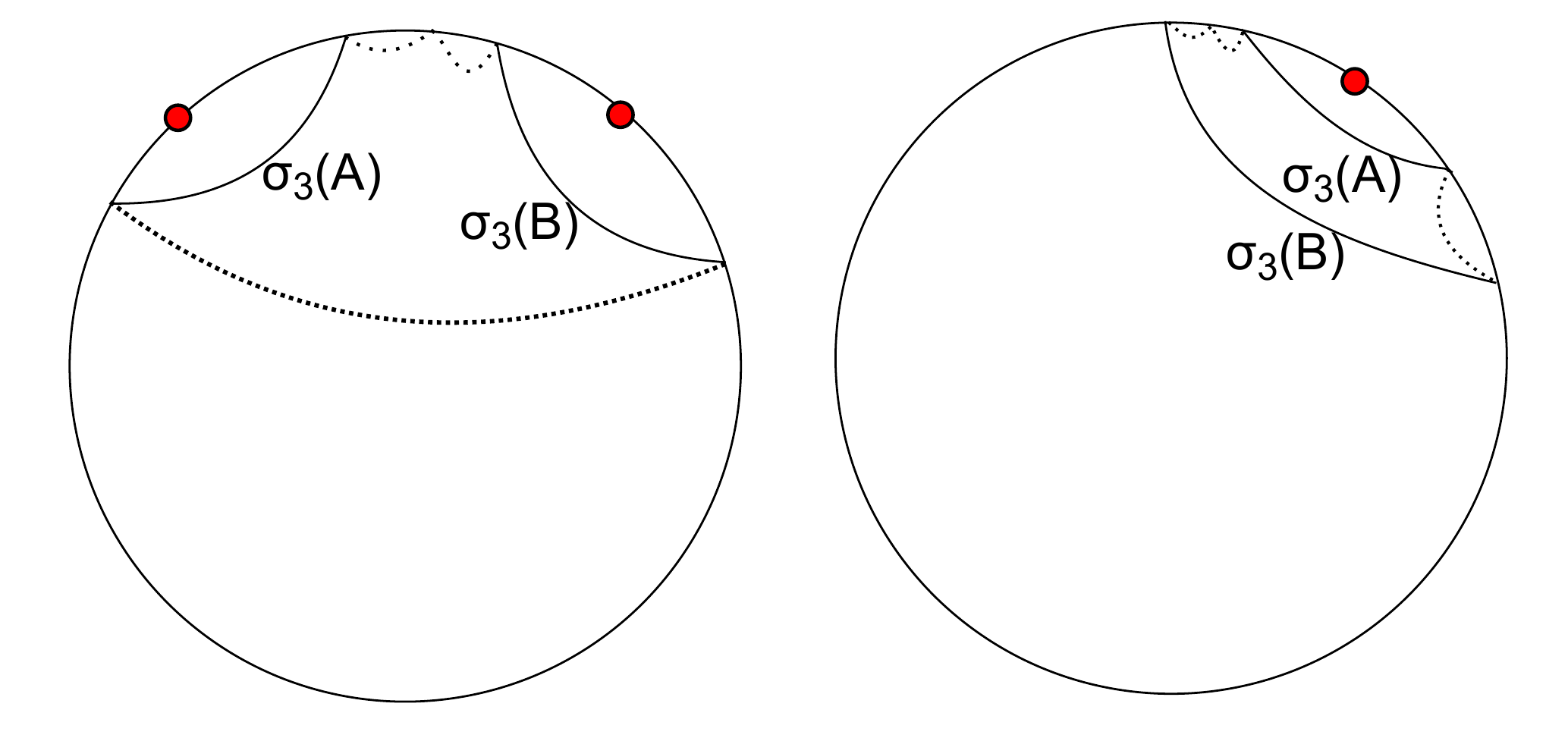}
     \caption{Image of a polygon with critical values marked.}
       \label{critvalue}
    \end{center}
\end{figure}

\begin{proof}

Let $T=ABCD_4...D_n$.

Suppose (1) holds. Then $T$ lies between two critical chords. Since there is a critical chord between $A$ and its sibling $A^\prime$, there must be a critical value underneath $\sigma_3(A)$. Similarly, there is a critical value underneath $\sigma_3(B)$. Furthermore, those two critical values are different since $A$ and $B$ approach two different critical chords. See Figure~\ref{critvalue} on left.

Now, since $T$ lies between two critical chords , $\sigma_3(T)$ lies in one half  of the circle (else it crosses $T$). Since $\sigma_3(T)$ is an $n$-gon, there must be a side $C^\prime$ with $\sigma_3(A)$ and $\sigma_3(B)$ underneath $C^\prime$. Since $\sigma_3(T)$ lies in one half of the circle $C^\prime$ is longer than both $\sigma_3(A)$ and $\sigma_3(B)$.

To see that $C^\prime=\sigma(C)$, consider two cases:

{\bf Case 1}: $|C|\geq\frac16$. Let $|A|=\frac13+\alpha$, $|B|=\frac13+\beta$ and $\gamma=\sum_{k=4}^n |D_k|$
Clearly $\gamma<\frac16$ and $|D_i|<\frac16$ for all $i=4,...n$. Then $|C|=\frac13-\alpha-\beta-\gamma$. 
Then $|\sigma_3(A)|=3|A|-1=3\alpha$, $|\sigma_3(B)|=3|B|-1=3\beta$, 
$|\sigma_3(D_i)=3|D_i|$ for all $i=4,...n$, and $|\sigma_3(C)|=1-3|C|=3\alpha+3\beta+3\gamma=|\sigma_3(A)|+|\sigma_3(B)|+\sum_{k=4}^n |\sigma_3(D_k)|$. Therefore $\sigma_3(C)$ is the longest side.

{\bf Case 2}: $|C|<\frac16$. Then all sides except $A$ and $B$ are shorter than $\frac16$. Thus under the application of $\sigma_3$ the side lengths of $\sigma_3(C),\sigma_3(D_4),...,\sigma_3(D_n)$ triple. Since $C$ was the longest among $C,D_4,...,D_n$, $\sigma_3(C)$ must be the longest among $\sigma_3(C),\sigma_3(D_4),...,\sigma_3(D_n)$. However, we have shown above that there exists at least one side that is longer than $\sigma_3(A)$ and $\sigma_3(B)$. Thus $\sigma_3(C)$ must be the longest.

Cases 1 and 2 establish that (1) implies (a).

Now suppose (2) occurs. See Figure~\ref{critvalue} on right. By the hypothesis, both $A$ and $B$ are within $\frac1{12}$ of a critical chord $\ell$. Then there is the same critical value underneath $\sigma_3(A)$ and $\sigma_3(B)$. Since all remaining sides are between $\sigma_3(A)$ and $\sigma_3(B)$, and since $|\sigma_3(A)|,|\sigma_3(B)|<\frac14$, one of those (the outermost from the critical value) will be the longest.  Thus, we have that (2) implies (b).
\end{proof}

\begin{figure}[h]
 \begin{center}
   \includegraphics[scale=0.75]{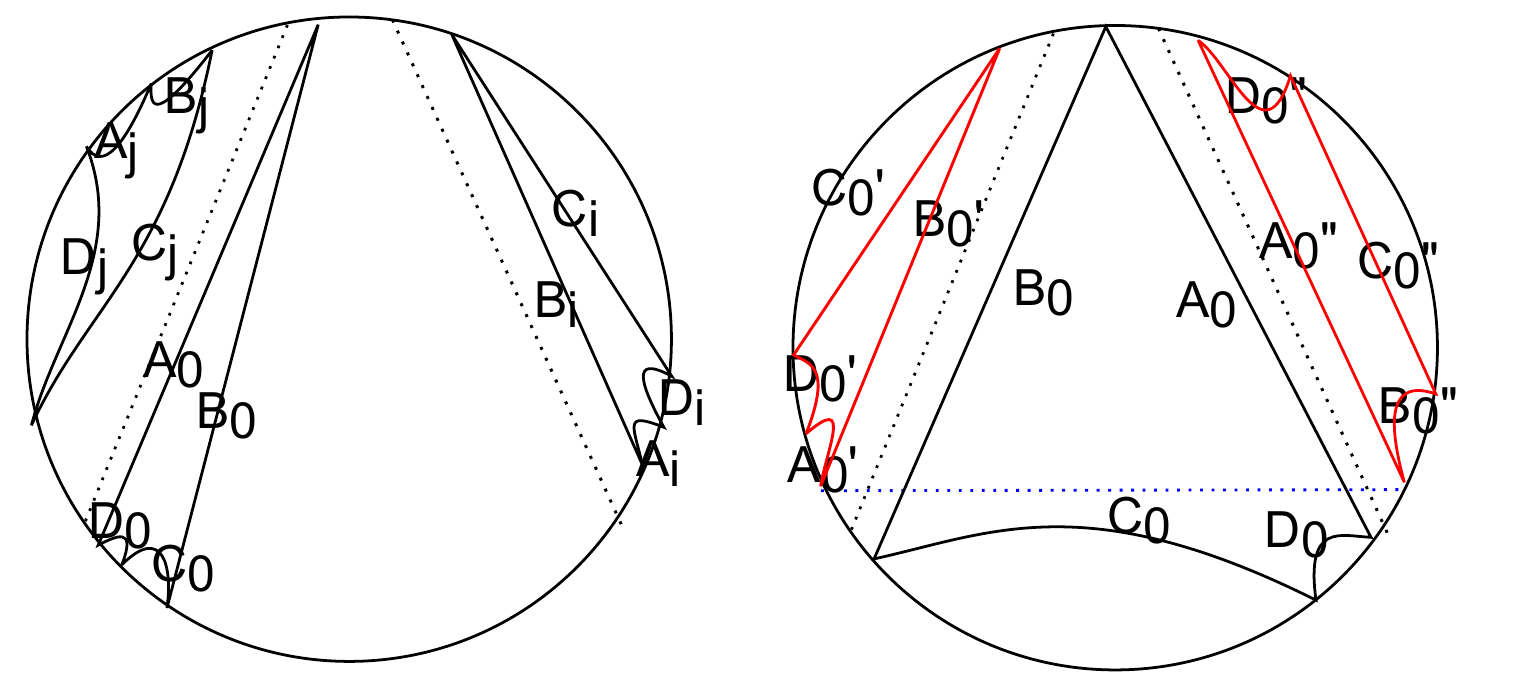}
     \caption{Impossible quadrilaterals under $\sigma_3$:  Case 1 of Theorem~\ref{noIRQinsigma3} on the left and Case 2 on the right.}
       \label{impquad}
    \end{center}
\end{figure}

We conclude this section by proving the special case of Kiwi's Theorem (\ref{kiwi}) for $\sigma_3$.

\begin{thm}\label{noIRQinsigma3}
There are no identity return quadrilaterals under $\sigma_3$.
\end{thm}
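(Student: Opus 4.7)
We argue by contradiction: suppose $T$ is an identity return quadrilateral in a sibling $3$-invariant lamination $\mathcal L$, of period $k$. By Theorem~\ref{noperiod2}, $k \ge 3$, since a period-$2$ identity return $k$-gon for $k \ge d = 3$ is impossible. By Proposition~\ref{thinpoly}, at some iterate --- which after re-indexing the orbit we take to be $T_0$ --- two sides $A_0$ and $B_0$ simultaneously lie within $\frac{1}{12}$ of a critical chord. Proposition~\ref{thinpoly} gives two configurations (Figure~\ref{impquad}), which we treat separately.

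In Case 1, $A_0$ and $B_0$ approach two \emph{different} critical chords and $T_0$ lies sandwiched between them. The geometry forces $A_0$ and $B_0$ to be opposite sides of the quadrilateral, each of length in $(\tfrac{1}{4}, \tfrac{5}{12})$, while the remaining sides $C_0, D_0$ cross the strip between the two critical chords. By Proposition~\ref{irt3-longest}(a), $C_1 = \sigma_3(C_0)$ is the longest side of $T_1$. The plan is to apply the Central Strip Lemma (Theorem~\ref{thm-centralstriplemma}) to the sibling collection of $A_0$: its central strip has long-arc length exceeding $\tfrac{1}{4} = \tfrac{1}{d+1}$, so the lemma applies. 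Identity return requires $\sigma_3^k(A_0) = A_0$, so the orbit of $A_0$ must eventually reenter the central strip; parts (1) and (2) of the lemma restrict the mode of reentry, and part (3) forces some intermediate iterate to come within $\tfrac{\eta}{d^{j-k}}$ of a critical chord outside the strip. Running the analogous analysis for $B_0$, combining with the longest-side tracking from Proposition~\ref{irt3-longest}, and using the mandatory disjointness of the orbit polygons $T_0, T_1, \ldots, T_{k-1}$ from Definition~\ref{id-return}, we expect to show that at least one of the four vertices cannot return to its starting position, yielding the contradiction.

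In Case 2, $A_0$ and $B_0$ approach the \emph{same} critical chord $\ell$, lying on opposite sides of it, while the remaining sides $C_0$ and $D_0$ cross $\ell$. By Proposition~\ref{irt3-longest}(b), one of $\sigma_3(A_0), \sigma_3(B_0)$ is the longest side of $T_1$. Here the plan is more combinatorial--geometric: the full sibling collections of $C_0$ and $D_0$ in $\mathcal L$ demand additional leaves that, given the tight packing of $A_0$ and $B_0$ about $\ell$, are forced to cross $A_0, B_0$ or their own siblings --- contradicting the non-crossing property of a lamination. If the relevant central strip happens to have degree $3$, then Corollary~\ref{unicritCSL} provides an alternative and more direct contradiction by precluding the reentry of $\ell$'s neighborhood needed for identity return.

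The main obstacle is Case 1. The Central Strip Lemma does not instantly forbid identity return, since $A_0$ itself spans two components of the central strip, so the return $\sigma_3^k(A_0) = A_0$ is formally an allowed reentry. The genuine contradiction must be extracted by coupling the Central Strip Lemma constraints on $A_0$ and $B_0$ with the longest-side evolution from Proposition~\ref{irt3-longest}, the rigidity imposed by identity return on every vertex, and the disjointness of the orbit polygons. The required bookkeeping --- rather than any single short argument --- is expected to deliver the contradiction.
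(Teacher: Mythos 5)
Your proposal is a roadmap, not a proof. You explicitly defer the substance to unstated ``bookkeeping'': in your Case~1 you write that one ``expects to show'' a contradiction, and you close by saying ``the required bookkeeping \ldots is expected to deliver the contradiction.'' That leaves the hardest part of the argument undone. The paper's actual proof, by contrast, carries out precisely that bookkeeping: it sets up a Claim (roughly, that every approach to a given critical chord may be assumed closer than the previous one), tracks which two sides are longest after each close approach using Proposition~\ref{irt3-longest}, records the ``state'' as which pair of sides sandwiches which critical chord (the $[\ell : A,B]$ notation), and shows via the Central Strip Lemma that the state either repeats or forces a side into a forbidden re-entry. In the two-distinct-chords case (your Case~1), the paper also needs an explicit length computation ($|C_1| = |A_1| + |B_1| + |D_1|$, after showing one of $C_0,D_0$ exceeds $\tfrac16$ because otherwise $T_1$ straddles a diameter and meets $T_0$) before reducing to the other case. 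None of this is present in your proposal.

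There is also a geometric misstatement. When two sides approach the \emph{same} critical chord (your Case~2, the paper's Case~1), the configuration from Proposition~\ref{thinpoly} and its proof is that $A_0$ and $B_0$ are nested on the \emph{same} side of the critical chord $\ell$, with the rest of the polygon between them --- not ``lying on opposite sides of it'' with $C_0, D_0$ crossing $\ell$. If $A_0$ and $B_0$ lay on opposite sides of $\ell$ with other sides crossing it, they would be in a two-critical-chord configuration, not the one-critical-chord configuration. Your proposed route in Case~2 (deduce a crossing in the sibling collections of $C_0, D_0$) is stated without argument, and as the configurations in the paper show, sibling collections can be placed for such a quadrilateral without immediate crossings; the contradiction only emerges after tracking the dynamics of the longest sides over several closest approaches. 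Finally, your fallback appeal to Corollary~\ref{unicritCSL} does not give a ``more direct contradiction'' here: that corollary only forbids re-entry with both endpoints in one component of $C\cap\ucirc$, and the return $\sigma_3^k(A_0)=A_0$ spans both components, so it is permitted by the corollary.

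In short: you correctly identify the ingredients --- Theorem~\ref{noperiod2}, Proposition~\ref{thinpoly}, Proposition~\ref{irt3-longest}, and the Central Strip Lemma --- and the two-case split on whether the two close sides approach the same or different critical chords matches the paper's split. But the proof is in the iteration-by-iteration state argument, and that is exactly what you have left unwritten.
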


\begin{proof}
Suppose that such a quadrilateral $T$, of period $n$ exists. Let $A,B,C,D$ be the sides of $T$, not necessarily in circular order.

{\bf Case 1}:   Suppose that for no iterate $i$ are two sides  within $\frac1{12}$ of two different critical chords. 

Then by ~\ref{thinpoly} there exists an iterate $T_0$ where two sides of $T_0$ are within $\frac1{12}$ of the same critical chord $\ell$. Let $A_0$ and $B_0$ be those sides.  The remaining sides must also eventually approach critical chords within $\frac1{12}$ (Proposition~\ref{irt3-closetocrit}).

{\bf Claim.} We may assume  without loss of generality that every approach to a given critical chord is closer than the previous approach to that critical chord.

To see this, suppose that  at some iterate we have a quadrilateral $T_j$ further away from a given critical chord than quadrilateral $T_i$, $i<j$, while at least one of the sides of $T_j$  is different from the sides of $T_i$ that are within $\frac1{12}$ of a critical chord.   Then, the different ``long'' side of $T_i$ must approach a different critical chord at some iterate before returning to itself.  Otherwise, the Central Strip Lemma is contradicted upon the return to $T_i$. In that case, we could take $T_j$ as a starting point instead of $T_i$, establishing the claim.

By Proposition~\ref{irt3-longest} let $B_1$ be the longest side. Since $|B_1|<\frac14$, so will grow by Lemma~\ref{lem-length}, let $B_i$ be the next approach of side $B$ to a critical chord. If $A_i$ is the second longest side, then we can take this as a starting point. So, without loss of generality, we can suppose that $C_i$ is the second longest. Then by the Claim and the Central Strip Lemma, the two sides must be close to a different critical chord $\ell^\prime$.  Now we have a central strip of sides $A_0,B_0$ around $\ell$ and a central strip of $B_i, C_i$ around $\ell^\prime$. Denote this state of affairs by $[\ell: A_0,B_0]$ and $[\ell^\prime: B_i, C_i]$.  See Figure~\ref{impquad} on left. By Proposition~\ref{irt3-longest} and without loss of generality, we may assume that  $C_{i+1}$ is the longest side.  Let $j$ be the iterate when $C_j$ next approaches a critical chord. 

If $A_j$ is second longest then $T_j$, by the Claim and the Central Strip Lemma, must be close to $\ell$. But this gives us $[\ell:A_j,C_j]$, $[\ell^\prime: B_i,C_i]$, which is up to renaming what we had before. If $B_j$ is second longest then by the Claim and the Central Strip Lemma $T_j$ must be close to $\ell^\prime$. This gives us $[\ell:A_0,B_0]$, $[\ell^\prime: C_j,B_j]$, which is   what we had before but at a later iterate.  If $D_j$ is the second longest, then $T_j$ cannot be within $\frac1{12}$ of either critical chord, without, by the Claim, violating the Central Strip Lemma. Therefore, no such quadrilateral exists.

{\bf Case 2}:  Now suppose that there is an iterate $i$ where two sides, $A_i$ and $B_i$ approach two different critical chords.  

Since this iterate is unique, if we find a subsequent iterate where both sides are within $\frac1{12}$ of the same critical chord, then we can follow the argument in Case 1 to see that no such quadrilateral exists.

Refer to Figure~\ref{impquad} on the right. Let $|A_0|=\frac13+\alpha$, $|B_0|=\frac13+\beta$, and note that $\alpha,\beta<\frac1{12}$. Then $|C_0|+|D_0|=\frac13-\alpha-\beta>\frac16$. First suppose that neither $C_0$ nor $D_0$ is longer than $\frac16$. Then by the Leaf Length Function (\ref{lem-tau}), $|A_1|=3\alpha$, $|B_1|=3\beta$, $|C_1|=3|C_0|$ and $|D_1|=3|D_0|$, and so $|A_1|+|B_1|+|C_1|+|D_1|=1$.  This means that $T_1$ is on both sides of the diameter, so it will intersect $T_0$. 
Therefore, we may suppose that $|C_0|>\frac16$. Then $|A_1|=3\alpha$, $|B_1|=3\beta$, $|C_1|=1-3|C_0|$, and $|D_1|=3|D_0|$. Hence $|C_1|=|A_1|+|B_1|+|D_1|$. 

If $C_0$ is not within $\frac1{12}$ of a critical chord then $C$ grows, so $C_i$ must be within a central strip formed by  $A_0$ or $B_0$ on its first approach to critical length; let it be $B_0$. Then by the Central Strip Lemma, $C_i$ and $B_i$ must be the two longest sides. Now we can follow the argument in Case 1, to see that no such quadrilateral exists.

Therefore, we may assume that $C_0$ is within $\frac1{12}$ of a critical chord.  Since $|C_1|<\frac14$, $C_1$ will continue to grow. So  it must approach a critical length at some iterate $i$. If it approaches a critical chord within the central strip of $A_0$ or $B_0$, we can follow the argument in Case I. So we may suppose that it approaches the same critical chord as at iterate $0$. Then it must go underneath $C_0$, as otherwise side $D$ never gets within $\frac1{12}$ of a critical chord.  There exists an iterate $i$ when side $D_i$ is within $\frac1{12}$ of a critical length. If quadrilateral $T_i$ is inside either of the central strips we can follow the Case I argument. So we may suppose that $D_i$ is within $\frac1{12}$ of the same critical chord as $C_0$. Then by the Central Strip Lemma, sides $C_i$ and $D_i$, must be the two longest sides, so one of them remains the longest. Eventually, one of the sides $A$ or $B$ will become the second longest, otherwise $T$ cannot return to $T_0$. Without loss of generality, let $C$ and $A$ be the two longest sides. Then as they approach critical length at iterate $j$, by the Central Strip Lemma the quadrilateral $T_j$ must be inside the central strip of $A_0$. Again we can follow the argument in Case I.
Therefore there is no such identity return quadrilateral.
\end{proof}

\subsection{Questions}\label{questions}

We conclude with a series of questions and remarks.

\begin{ques}
What is the appropriate generalization of Proposition~\ref{irt3-closetocrit} to $\sigma_d$ for $d>3$?
\end{ques}

In order to be able to use the full power of the Central Strip Lemma, we want leaves of an identity return polygon for $\sigma_d$ to get within $\frac{1}{d(d+1)}$ of a critical chord.  This need not happen even for a $d$-gon for $\sigma_d$, in general.  For example, the period 3 quadrilateral with vertices $\{\overline{132}, \overline{032}, \overline{022}, \overline{200}\}$ under $\sigma_4$ has a side, namely $\overline{\overline{022}~\overline{200}}$, which never gets within $\frac{1}{20}$ of a critical chord.

\begin{ques}  
With reference to Proposition~\ref{nofixed}, can an identity return polygon have a side of fixed length?
\end{ques}

\begin{ques}  
With reference to Proposition~\ref{irt3-notequal}, if an identity return polygon has two sides of the same length, must they simultaneously approach two different critical chords?
\end{ques}

\begin{ques}
What are the appropriate generalizations of Propositions \ref{thinpoly} and \ref{irt3-longest} to $\sigma_d$ for $d>3$?
\end{ques}

We can show that if the two longest sides of an identity return polygon $P$ in a $d$-invariant lamination are simultaneously within $\frac1{2d}$ of the same critical chord, then one of the two longest sides of $\sigma_d(P)$ remains longest.

One can define ``identity return polygon'' without assuming it is in a $d$-invariant lamination.   

\begin{defn}[Alternate Definition of Identity Return Polygon] 
A polygon $P$ in the closed unit disk is called an {\em identity return polygon } iff $P$ is
periodic under iteration of $\sigma_d$, the polygons in the orbit of $P$ are pairwise disjoint, circular order of vertices of $P$ is preserved by the action of $\sigma_d$ on $P$, and on its first return,
each vertex (and thus each side) of $P$ is carried to itself by the identity.
\label{alt-id-return}
\end{defn}

\begin{ques}
Does the existence of an identity return polygon $P$ defined as in Definition~\ref{alt-id-return} imply the existence of a $d$-invariant lamination containing $P$?
\end{ques}

\begin{ques}
What polynomials have Julia sets with a vertex that returns to itself in the pattern of the identity return polygons of Figure~\ref{irt-example}?  Of Example~\ref{exp-irp-period3}, in general?
\end{ques}

\begin{ques}
What is the ``simplest'' $3$-invariant lamination that contains a given identity return triangle?
\end{ques}

Here ``simplest'' might mean, an invariant lamination with no leaves other than preimages of the triangle or their limit leaves, and with each of two sides of the triangle bordering a (different) infinite gap of the lamination.

\begin{ques}
Is there any bound on the number of identity return triangle orbits that a  $3$-invariant lamination can contain?
\end{ques}

It is clear from proofs above that a $3$-invariant lamination can contain only one identity return triangle orbit where two sides of the triangle approach within $\frac1{12}$ of two different critical chords on the same iterate.

\begin{ques}
Given $d> 2$ and a period $p>2$ (necessarily), how many distinct identity return $d$-gon orbits of period $p$ can be formed under $\sigma_d$?
\end{ques}

\begin{ques}
Given $d>2$ orbits of period $p>2$ under $\sigma_d$, is it the case that they form at most one identity return $d$-gon orbit?
\end{ques}

That for $d=2$, the answer is ``yes'' appears to be intimately connected to the detailed structure of parameter space \cite{Thurston:2009}.

\bibliographystyle{annotated}

\end{document}